\documentclass[11pt,twoside]{article}
\usepackage{amsmath, amsthm, amscd, amsfonts, amssymb, graphicx, color}

\setcounter{page}{1}

\setlength{\textheight}{21.6cm}

\setlength{\textwidth}{14cm}

\setlength{\oddsidemargin}{1cm}

\setlength{\evensidemargin}{1cm}

\pagestyle{myheadings}

\thispagestyle{empty}

\markboth{\small{Prasenjit Ghosh \& T. K. Samanta }}{\small{ g-atomic subspaces for operators }}

\date{}

\begin{document}

\centerline{}

\centerline {\Large{\bf Generalized atomic subspaces for operators in Hilbert spaces }}

\newcommand{\mvec}[1]{\mbox{\bfseries\itshape #1}}
\centerline{}
\centerline{\textbf{Prasenjit Ghosh}}
\centerline{Department of Pure Mathematics, University of Calcutta,}
\centerline{35, Ballygunge Circular Road, Kolkata, 700019, West Bengal, India}
\centerline{e-mail: prasenjitpuremath@gmail.com}
\centerline{}
\centerline{\textbf{T. K. Samanta}}
\centerline{Department of Mathematics, Uluberia College,}
\centerline{Uluberia, Howrah, 711315,  West Bengal, India}
\centerline{e-mail: mumpu$_{-}$tapas5@yahoo.co.in}

\newtheorem{Theorem}{\quad Theorem}[section]

\newtheorem{definition}[Theorem]{\quad Definition}

\newtheorem{theorem}[Theorem]{\quad Theorem}

\newtheorem{remark}[Theorem]{\quad Remark}

\newtheorem{corollary}[Theorem]{\quad Corollary}

\newtheorem{note}[Theorem]{\quad Note}

\newtheorem{lemma}[Theorem]{\quad Lemma}

\newtheorem{example}[Theorem]{\quad Example}

\newtheorem{result}[Theorem]{\quad Result}
\newtheorem{conclusion}[Theorem]{\quad Conclusion}

\newtheorem{proposition}[Theorem]{\quad Proposition}

\centerline{}

\begin{abstract}
\textbf{\emph{We introduce the notion of a g-atomic subspace for a bounded linear operator and construct several useful resolutions of the identity operator on a Hilbert space using the theory of g-fusion frames.\,Also we shall describe the concept of frame operator for a pair of g-fusion Bessel sequences and some of their properties.}}
\end{abstract}

{\bf Keywords:}  \emph{Frame, atomic subspace, g-fusion frame, K-g-fusion frame.}\\

{\bf 2010 Mathematics Subject Classification:} \emph{Primary 42C15; Secondary 46C07.}

\section{Introduction}
 
\smallskip\hspace{.6 cm} Frames for Hilbert spaces were first introduced by Duffin and Schaeffer \cite{Duffin} in 1952 to study some fundamental problems in non-harmonic Fourier series.\;Later on, after some decades, frame theory was popularized by Daubechies, Grossman, Meyer \cite{Daubechies}.\;At present, frame theory has been widely used in signal and image processing, filter bank theory, coding and communications, system modeling and so on.\;Several generalizations of frames  namely, \,$K$-frames, \,$g$-frames, fusion frames etc. have been introduced in recent times.

$K$-frames were introduced by L.\,Gavruta \cite{L} to study the atomic system with respect to a bounded linear operator.\;Using frame theory techiques, the author also studied the atomic decompositions for operators on reporducing kernel Hilbert spaces \cite{G}.\;Sun \cite{Sun}\, introduced a \,$g$-frame and a \,$g$-Riesz basis in complex Hilbert spaces and discussed several properties of them.\;Huang \cite{Hua} began to study \,$K$-$g$-frame by combining \,$K$-frame and \,$g$-frame.\;P.\,Casazza \cite{Kutyniok} was first to introduce the notion of fusion frames or frames of subspaces and gave various ways to obtain a resolution of the identity operator from a fuison frame.\;The concept of an atomic subspace with respect to a bounded linear operator were introduced by A.\,Bhandari and S.\,Mukherjee \cite{Bhandari}.\;Construction of \,$K$-$g$-fusion frames and their dual were presented by Sadri and Rahimi \cite{Sadri} to generalize the theory of \,$K$-frame, fusion frame and \,$g$-frame.\,P. Ghosh and T. K. Samanta \cite{P} studied the stability of dual \,$g$-fusion frames in Hilbert spaces. 

In this paper,\;we present some useful results about resolution of the identity operator on a Hilbert space using the theory of \,$g$-fusion frames.\;We give the notion of \,$g$-atomic subspace with respect to a bounded linear operator.\;The frame operator for a pair of \,$g$-fusion Bessel sequences are discussed and some properties are going to be established.

The paper is organized as follows; in Section 2, we briefly recall the basic definitions and results.\;Various ways of obtaining resolution of the identity operator on a Hilbert space in \,$g$-fusion frame are studied in Section 3.\;$g$-atomic subspaces are introduced and discussed in Section 4.\,In Section 5, frame operator for a pair of \,$g$-fusion Bessel sequences are given and establish various properties.

Throughout this paper,\;$H$\; is considered to be a separable Hilbert space with associated inner product \,$\left <\,\cdot \,,\, \cdot\,\right>$\, and \,$\left\{\,H_{j}\,\right\}_{ j \,\in\, J}$\, are the collection of Hilbert spaces, where \,$J$\; is subset of  integers \,$\mathbb{Z}$.\;$I_{H}$\; is the identity operator on \,$H$.\;$\mathcal{B}\,(\,H_{\,1},\, H_{\,2}\,)$\; is a collection of all bounded linear operators from \,$H_{\,1} \,\text{to}\, H_{\,2}$.\;In particular \,$\mathcal{B}\,(\,H\,)$\; denote the space of all bounded linear operators on \,$H$.\;For \,$T \,\in\, \mathcal{B}\,(\,H\,)$, we denote \,$\mathcal{N}\,(\,T\,)$\; and \,$\mathcal{R}\,(\,T\,)$\; for null space and range of \,$T$, respectively.\,Also, \,$P_{\,V} \,\in\, \mathcal{B}\,(\,H\,)$\; is the orthonormal projection onto a closed subspace \,$V \,\subset\, H$.\;Define the space
\[l^{\,2}\left(\,\left\{\,H_{j}\,\right\}_{ j \,\in\, J}\,\right) \,=\, \left \{\,\{\,f_{\,j}\,\}_{j \,\in\, J} \,:\, f_{\,j} \;\in\; H_{j},\; \sum\limits_{\,j \,\in\, J}\, \left \|\,f_{\,j}\,\right \|^{\,2} \,<\, \infty \,\right\}\]
with inner product is given by \,$\left<\,\{\,f_{\,j}\,\}_{ j \,\in\, J} \,,\, \{\,g_{\,j}\,\}_{ j \,\in\, J}\,\right> \;=\; \sum\limits_{\,j \,\in\, J}\, \left<\,f_{\,j} \,,\, g_{\,j}\,\right>_{H_{j}}$.\,Clearly \,$l^{\,2}\left(\,\left\{\,H_{j}\,\right\}_{ j \,\in\, J}\,\right)$\; is a Hilbert space with the pointwise operations \cite{Sadri}.

\section{Preliminaries}

\begin{theorem}(\,Douglas' factorization theorem\,)\,{\cite{Douglas}}\label{th1}
Let \;$U,\, V \,\in\, \mathcal{B}\,(\,H\,)$.\,Then the following conditions are equivalent:
\begin{itemize}
\item[(\,1\,)]\;\;$\mathcal{R}\,(\,U\,) \,\subseteq\, \mathcal{R}\,(\,V\,)$.
\item[(\,2\,)]\;\;$U\, U^{\,\ast} \,\leq\, \lambda^{\,2}\; V\,V^{\,\ast}$\; for some \,$\lambda \,>\, 0$.
\item[(\,3\,)]\;\;$U \,=\, V\,W$\, for some bounded linear operator \,$W$\, on \,$H$.
\end{itemize}
\end{theorem}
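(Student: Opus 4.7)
The plan is to establish the cycle of implications $(3) \Rightarrow (2) \Rightarrow (1) \Rightarrow (3)$, with the heart of the proof being the construction of a bounded operator $W$ out of either the operator inequality or the range inclusion. I would first dispatch the two easy directions and then tackle the single analytic construction that does the real work.

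The easy directions come first. For $(3) \Rightarrow (1)$, observe that $\mathcal{R}(U) = \mathcal{R}(VW) \subseteq \mathcal{R}(V)$ is immediate. For $(3) \Rightarrow (2)$, compute $UU^* = V(WW^*)V^* \leq \|W\|^2 \, VV^*$, using $WW^* \leq \|W\|^2 I_H$ together with the fact that the map $X \mapsto VXV^*$ preserves the operator order on self-adjoint operators; thus $\lambda = \|W\|$ works.

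For the main implication $(2) \Rightarrow (3)$, the idea is to build $W^*$ first and then recover $W$ by taking adjoints. Define a linear map $S \colon \mathcal{R}(V^*) \to H$ by $S(V^*x) := U^*x$. The inequality $UU^* \leq \lambda^2\, VV^*$, read out as $\|U^*x\|^2 \leq \lambda^2 \|V^*x\|^2$ for every $x \in H$, does double duty: it shows that $S$ is well-defined (if $V^*x = V^*y$ then $\|U^*x - U^*y\|^2 \leq \lambda^2 \|V^*x - V^*y\|^2 = 0$) and that $S$ is bounded by $\lambda$ on $\mathcal{R}(V^*)$. I would extend $S$ continuously to $\overline{\mathcal{R}(V^*)}$ and by zero on $\mathcal{N}(V) = \overline{\mathcal{R}(V^*)}^{\perp}$, obtaining a bounded linear operator on $H$. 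Setting $W := S^*$, a short computation on the dense subspace $\mathcal{R}(V^*)$ gives $U^* = W^*V^*$, and hence $U = VW$.

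Finally, for $(1) \Rightarrow (3)$, I would rely on the closed graph theorem. For each $h \in H$, the vector $Uh \in \mathcal{R}(V)$ admits a unique preimage in $\mathcal{N}(V)^{\perp} = \overline{\mathcal{R}(V^*)}$, which we denote $Wh$; linearity is routine. To see that $W$ is bounded it suffices to check that its graph is closed: if $h_n \to h$ and $Wh_n \to k$, then $V(Wh_n) = Uh_n \to Uh$ and also $V(Wh_n) \to Vk$, so $Uh = Vk$ with $k \in \mathcal{N}(V)^{\perp}$, forcing $k = Wh$. The principal obstacle across the whole proof is the well-definedness and extension step in $(2) \Rightarrow (3)$; once the single inequality is seen to enforce both well-definedness and continuity of $S$ simultaneously, the remainder is bookkeeping with adjoints.
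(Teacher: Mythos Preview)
Your proof is correct and follows the standard argument (essentially that of Douglas' original 1966 paper). However, the paper does not actually prove this theorem: it is stated in the Preliminaries section as a cited result from \cite{Douglas}, with no proof given, so there is nothing in the paper to compare your argument against.
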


\begin{theorem}\cite{Jain}\label{th1.001}
The set \,$\mathcal{S}\,(\,H\,)$\; of all self-adjoint operators on \,$H$\; is a partially ordered set with respect to the partial order \,$\leq$\, which is defined as for \,$T,\,S \,\in\, \mathcal{S}\,(\,H\,)$ 
\[T \,\leq\, S \,\Leftrightarrow\, \left<\,T\,f \,,\, f\,\right> \,\leq\, \left<\,S\,f \,,\, f\,\right>\; \;\forall\; f \,\in\, H.\] 
\end{theorem}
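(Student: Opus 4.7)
The plan is to verify the three defining axioms of a partial order: reflexivity, transitivity, and antisymmetry. For $T,S \in \mathcal{S}(H)$, the relation $T \leq S$ is defined pointwise in terms of the scalar inequality $\langle Tf,f\rangle \leq \langle Sf,f\rangle$; note that both quadratic forms are real-valued since $T$ and $S$ are self-adjoint, so the comparison makes sense.

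First I would dispatch reflexivity and transitivity as immediate consequences of the corresponding properties of the order relation on $\mathbb{R}$. For every $f \in H$ one has $\langle Tf,f\rangle \leq \langle Tf,f\rangle$, giving $T \leq T$. If $T \leq S$ and $S \leq R$, then for each $f \in H$ the real inequalities $\langle Tf,f\rangle \leq \langle Sf,f\rangle \leq \langle Rf,f\rangle$ chain together, proving $T \leq R$.

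The only substantive step is antisymmetry, and this is the place where the self-adjointness hypothesis is genuinely needed. Assuming $T \leq S$ and $S \leq T$, I would deduce $\langle (T-S)f,f\rangle = 0$ for all $f \in H$. Set $A = T - S$; then $A \in \mathcal{S}(H)$ and $\langle Af,f\rangle = 0$ for every $f$. I would then invoke the standard fact that a self-adjoint operator whose quadratic form vanishes identically must itself be zero. This can be justified either by the polarization identity for self-adjoint operators,
\[
\langle Af,g\rangle \;=\; \tfrac{1}{4}\bigl(\langle A(f+g),f+g\rangle - \langle A(f-g),f-g\rangle\bigr),
\]
which forces $\langle Af,g\rangle = 0$ for all $f,g$ and hence $A=0$, or by the identity $\|A\| = \sup_{\|f\|=1}|\langle Af,f\rangle|$ valid for self-adjoint $A$. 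Either way one concludes $T = S$.

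The only delicate point is this last implication, since the conclusion ``$\langle Af,f\rangle = 0\ \forall f$ implies $A=0$'' fails in general for operators on complex Hilbert spaces without self-adjointness (over $\mathbb{R}$ one also needs the hypothesis). So the main conceptual obstacle, modest though it is, is to make explicit that the polarization argument used in antisymmetry is precisely what forces the order to be considered on $\mathcal{S}(H)$ rather than on all of $\mathcal{B}(H)$.
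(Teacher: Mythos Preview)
Your proof is correct and entirely standard. Note, however, that the paper does not actually prove this statement: it appears in the preliminaries section as a result quoted from the cited reference \cite{Jain}, with no argument supplied. So there is no paper proof to compare against; your verification of reflexivity, transitivity, and antisymmetry via the vanishing of the quadratic form of $T-S$ is exactly the expected route.

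One small correction to your closing remark: over a \emph{complex} Hilbert space the implication ``$\langle Af,f\rangle = 0$ for all $f \Rightarrow A=0$'' in fact holds for \emph{every} bounded operator $A$, self-adjoint or not, by the full complex polarization identity
\[
4\langle Af,g\rangle \;=\; \sum_{k=0}^{3} i^{k}\,\bigl\langle A(f+i^{k}g),\,f+i^{k}g\bigr\rangle.
\]
It is over \emph{real} Hilbert spaces that the implication can fail without self-adjointness (a rotation by $\pi/2$ in $\mathbb{R}^{2}$ is the standard counterexample). The genuine role of self-adjointness in this theorem, as you correctly observed at the outset, is to ensure that the quadratic forms $\langle Tf,f\rangle$ and $\langle Sf,f\rangle$ are real-valued so that the inequality defining $\leq$ is meaningful at all.
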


\begin{theorem}\cite{Gavruta}\label{th1.01}
Let \,$V \,\subset\, H$\; be a closed subspace and \,$T \,\in\, \mathcal{B}\,(\,H\,)$.\;Then \,$P_{\,V}\, T^{\,\ast} \,=\, P_{\,V}\,T^{\,\ast}\, P_{\,\overline{T\,V}}$.\;If \,$T$\; is an unitary operator (\,i\,.\,e \,$T^{\,\ast}\, T \,=\, I_{H}$\,), then \,$P_{\,\overline{T\,V}}\;T \,=\, T\,P_{\,V}$.
\end{theorem}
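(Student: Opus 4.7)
The plan is to handle the two identities in sequence, with the second one following from the first by adjoint duality plus an extra argument that uses the isometry condition $T^\ast T = I_H$.

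For the first identity $P_V T^\ast = P_V T^\ast P_{\overline{TV}}$, I would fix an arbitrary $h \in H$ and write the orthogonal decomposition $h = P_{\overline{TV}} h + w$ with $w \in (\overline{TV})^\perp$. The identity will reduce to showing $P_V T^\ast w = 0$, i.e.\ that $T^\ast w \perp V$. To verify this, I take an arbitrary $v \in V$ and compute
\[
\langle T^\ast w, v \rangle = \langle w, T v \rangle,
\]
which vanishes because $T v \in TV \subseteq \overline{TV}$ while $w \in (\overline{TV})^\perp$. This is the main content of the first statement and is essentially a one-line orthogonality check.

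For the second identity $P_{\overline{TV}} T = T P_V$, I would first take adjoints in the equation just proved. Since $P_V$ and $P_{\overline{TV}}$ are self-adjoint, this yields $T P_V = P_{\overline{TV}} T P_V$. So the task reduces to showing $P_{\overline{TV}} T = P_{\overline{TV}} T P_V$, or equivalently $P_{\overline{TV}} T (I_H - P_V) = 0$, which amounts to proving that $T$ maps $V^\perp$ into $(\overline{TV})^\perp$. Here I would use $T^\ast T = I_H$: for $u \in V^\perp$ and $v \in V$,
\[
\langle T u, T v \rangle = \langle T^\ast T u, v \rangle = \langle u, v \rangle = 0,
\]
so $Tu \perp TV$ and hence $Tu \perp \overline{TV}$, giving $P_{\overline{TV}} T u = 0$ as required.

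The only subtle point is that the isometry hypothesis $T^\ast T = I_H$ is needed precisely at the last step; the first identity is valid for arbitrary bounded $T$. I do not anticipate a genuine obstacle — both steps are short orthogonality arguments — but I would be careful to keep track of which side of the equation is being manipulated when taking adjoints, so that the two projections $P_V$ and $P_{\overline{TV}}$ do not get interchanged.
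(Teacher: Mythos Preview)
Your proof is correct. Both identities are established by clean orthogonality arguments, and you correctly identify that only the isometry condition $T^{\ast}T = I_H$ (not full unitarity) is needed for the second part.

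Note, however, that the paper does not actually supply a proof of this statement: it is listed in the Preliminaries section as a cited result from \cite{Gavruta}, so there is no ``paper's own proof'' to compare against. Your argument is a standard and complete justification of the cited fact.
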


\begin{definition}\cite{O}
A sequence \,$\left\{\,f_{\,j}\,\right\}_{j \,\in\, J}$\, of elements in \,$H$\, is a frame for \,$H$\, if there exist constants \,$A,\, B \,>\, 0$\, such that
\[ A\, \|\,f\,\|^{\,2} \,\leq\, \sum\limits_{j \,\in\, J}\, \left|\, \left<\,f \,,\, f_{\,j} \, \right>\,\right|^{\,2} \,\leq\, B \,\|\,f\,\|^{\,2}\; \;\forall\; f \,\in\, H. \]
The constants \,$A$\, and \,$B$\, are called frame bounds.
\end{definition}

\begin{definition}\cite{Kutyniok}
Let \,$\left\{\,W_{j}\,\right\}_{ j \,\in\, J}$\; be a collection of closed subspaces of \,$H$\; and \,$\left\{\,v_{j}\,\right\}_{ j \,\in\, J}$\, be a collection of positive weights.\;A family of weighted closed subspaces \,$\left\{\, (\,W_{j},\, v_{j}\,) \,:\, j \,\in\, J\,\right\}$\; is called a fusion frame for \,$H$\; if there exist constants \,$0 \,<\, A \,\leq\, B \,<\, \infty$\; such that
\[A \;\left\|\,f \,\right\|^{\,2} \,\leq\, \sum\limits_{\,j \,\in\, J}\, v_{j}^{\,2}\,\left\|\,P_{\,W_{j}}\,(\,f\,) \,\right\|^{\,2} \,\leq\, B \; \left\|\,f \, \right\|^{\,2}\; \;\forall\; f \,\in\, H.\]
The constants \,$A,\, B$\; are called fusion frame bounds.\;If \,$A \,=\, B$\; then the fusion frame is called a tight fusion frame, if \,$A \,=\, B \,=\, 1$\; then it is called a Parseval fusion frame.
\end{definition}

\begin{definition}\cite{Bhandari}
Let \,$\left\{\,W_{j}\,\right\}_{ j \,\in\, J}$\, be a family of closed subspaces of \,$H$\, and \,$\left\{\,v_{j}\,\right\}_{ j \,\in\, J}$\, be a family of positive weights and \,$K \,\in\, \mathcal{B}\,(\,H\,)$.\;Then \,$\left\{\, (\,W_{j},\, v_{j}\,) \,:\, j \,\in\, J\,\right\}$\, is said to be an atomic subspace of \,$H$\, with respect to \,$K$\, if following conditions hold:
\begin{itemize}
\item[$(I)$] \,$\sum\limits_{\,j \,\in\, J}\,v_{j}\,f_{j}$\, is convergent for all \,$\left\{\,f_{\,j}\,\right\}_{j \,\in\, J} \,\in\, \left(\,\sum\limits_{\,j \,\in\, J}\,\oplus\, W_{j}\,\right)_{l^{\,2}}$.
\item[(II)] For every \,$f \,\in\, H$, there exists \,$\left\{\,f_{\,j}\,\right\}_{j \,\in\, J} \,\in\, \left(\,\sum\limits_{\,j \,\in\, J}\,\oplus\, W_{j}\,\right)_{l^{\,2}}$\, such that 
\[K\,(\,f\,) \,=\, \sum\limits_{\,j \,\in\, J}\, v_{j}\,f_{\,j}\; \;\text{and}\; \;\left\|\,\left\{\,f_{\,j}\,\right\}\,\right\|_{\left(\,\sum\limits_{\,j \,\in\, J}\,\oplus\, W_{j}\,\right)_{l^{\,2}}} \,\leq\, C\; \|\,f\,\|_{H}\]\; for some \,$C \,>\, 0$, where 
\[\left(\,\sum\limits_{\,j \,\in\, J}\,\oplus\, W_{j}\,\right)_{l^{\,2}} \,=\, \left \{\,\{\,f_{\,j}\,\}_{j \,\in\, J} \,:\, f_{\,j} \,\in\, W_{j},\, \sum\limits_{\,j \,\in\, J}\, \left \|\,f_{\,j}\,\right \|^{\,2} \,<\, \infty \,\right\}\]
with inner product is given by \;$\left<\,\{\,f_{\,j}\,\}_{ j \,\in\, J} \,,\, \{\,g_{\,j}\,\}_{ j \,\in\, J}\,\right> \,=\, \sum\limits_{\,j \,\in\, J}\, \left<\,f_{\,j} \,,\, g_{\,j}\,\right>_{H}$. 
\end{itemize}    
\end{definition}

\begin{definition}\cite{Sun}
A sequence \,$\left\{\,\Lambda_{j} \,\in\, \mathcal{B}\,(\,H,\, H_{j}\,) \,:\, j \,\in\, J\,\right\}$\, is called a generalized frame or g-frame for \,$H$\; with respect to \,$\left\{\,H_{j}\,\right\}_{j \,\in\, J}$\; if there are two positive constants \,$A$\, and \,$B$\, such that
\[A \;\left \|\, f \,\right \|^{\,2} \,\leq\, \sum\limits_{\,j \,\in\, J}\, \left\|\,\Lambda_{j}\,f \,\right\|^{\,2} \,\leq\, B \; \left\|\, f \, \right\|^{\,2}\; \;\forall\; f \,\in\, H.\]
The constants \,$A$\; and \,$B$\; are called the lower and upper frame bounds, respectively.
\end{definition}

\begin{definition}\cite{Ahmadi,Sadri}
Let \,$\left\{\,W_{j}\,\right\}_{ j \,\in\, J}$\, be a collection of closed subspaces of \,$H$\; and \,$\left\{\,v_{j}\,\right\}_{ j \,\in\, J}$\; be a collection of positive weights and let \,$\Lambda_{j} \,\in\, \mathcal{B}\,(\,H,\, H_{j}\,)$\; for each \,$j \,\in\, J$.\;Then the family \,$\Lambda \,=\, \{\,\left(\,W_{j},\, \Lambda_{j},\, v_{j}\,\right)\,\}_{j \,\in\, J}$\; is called a generalized fusion frame or a g-fusion frame for \,$H$\; respect to \,$\left\{\,H_{j}\,\right\}_{j \,\in\, J}$\; if there exist constants \,$0 \,<\, A \,\leq\, B \,<\, \infty$\; such that
\begin{equation}\label{eq1}
A \;\left \|\,f \,\right \|^{\,2} \,\leq\, \sum\limits_{\,j \,\in\, J}\,v_{j}^{\,2}\, \left\|\,\Lambda_{j}\,P_{\,W_{j}}\,(\,f\,) \,\right\|^{\,2} \,\leq\, B \; \left\|\, f \, \right\|^{\,2}\; \;\forall\; f \,\in\, H.
\end{equation}
The constants \,$A$\; and \,$B$\; are called the lower and upper bounds of g-fusion frame, respectively.\,If \,$A \,=\, B$\; then \,$\Lambda$\; is called tight g-fusion frame and if \;$A \,=\, B \,=\, 1$\, then we say \,$\Lambda$\; is a Parseval g-fusion frame.\;If  \,$\Lambda$\; satisfies only the condition
\[\sum\limits_{\,j \,\in\, J}\, v_{j}^{\,2}\; \left\|\,\Lambda_{j}\, P_{\,W_{j}}\,(\,f\,) \,\right\|^{\,2} \,\leq\, B \; \left\|\, f \, \right\|^{\,2}\; \;\forall\; f \,\in\, H\]then it is called a g-fusion Bessel sequence with bound \,$B$\; in \,$H$. 
\end{definition}

\begin{definition}\cite{Sadri}
Let \,$\Lambda \,=\, \left\{\,\left(\,W_{j},\, \Lambda_{j},\, v_{j}\,\right)\,\right\}_{j \,\in\, J}$\, be a g-fusion Bessel sequence in \,$H$\, with a bound \,$B$.\;The synthesis operator \,$T_{\Lambda}$\, of \,$\Lambda$\; is defined as 
\[ T_{\Lambda} \,:\, l^{\,2}\left(\,\left\{\,H_{j}\,\right\}_{ j \,\in\, J}\,\right) \,\to\, H,\]
\[T_{\Lambda}\,\left(\,\left\{\,f_{\,j}\,\right\}_{j \,\in\, J}\,\right) \,=\,  \sum\limits_{\,j \,\in\, J}\, v_{j}\, P_{\,W_{j}}\,\Lambda_{j}^{\,\ast}\,f_{j}\; \;\;\forall\; \{\,f_{j}\,\}_{j \,\in\, J} \,\in\, l^{\,2}\left(\,\left\{\,H_{j}\,\right\}_{ j \,\in\, J}\,\right)\] and the analysis operator is given by 
\[ T_{\Lambda}^{\,\ast} \,:\, H \,\to\, l^{\,2}\left(\,\left\{\,H_{j}\,\right\}_{ j \,\in\, J}\,\right),\; T_{\Lambda}^{\,\ast}\,(\,f\,) \,=\,  \left\{\,v_{j}\,\Lambda_{j}\, P_{\,W_{j}}\,(\,f\,)\,\right\}_{ j \,\in\, J}\; \;\forall\; f \,\in\, H.\]
The g-fusion frame operator \,$S_{\Lambda} \,:\, H \,\to\, H$\; is defined as follows:
\[S_{\Lambda}\,(\,f\,) \,=\, T_{\Lambda}\,T_{\Lambda}^{\,\ast}\,(\,f\,) \,=\, \sum\limits_{\,j \,\in\, J}\, v_{j}^{\,2}\; P_{\,W_{j}}\, \Lambda_{j}^{\,\ast}\; \Lambda_{j}\, P_{\,W_{j}}\,(\,f\,)\]and it can be easily verify that 
\[\left<\,S_{\Lambda}\,(\,f\,) \,,\, f\,\right> \,=\, \sum\limits_{\,j \,\in\, J}\, v_{j}^{\,2}\, \left\|\,\Lambda_{j}\, P_{\,W_{j}}\,(\,f\,) \,\right\|^{\,2}\; \;\forall\; f \,\in\, H.\]
Furthermore, if \,$\Lambda$\, is a g-fusion frame with bounds \,$A$\, and \,$B$\, then from (\ref{eq1}),
\[\left<\,A\,f \,,\, f\,\right> \,\leq\, \left<\,S_{\Lambda}\,(\,f\,) \,,\, f\,\right> \,\leq\, \left<\,B\,f \,,\, f\,\right>\; \;\forall\; f \,\in\, H.\]
The operator \,$S_{\Lambda}$\; is bounded, self-adjoint, positive and invertible.\;Now, according to the Theorem (\ref{th1.001}), we can write, \,$A\,I_{\,H} \,\leq\,S_{\Lambda} \,\leq\, B\,I_{H}$\; and this gives \[ B^{\,-1}\,I_{H} \,\leq\, S_{\,\Lambda}^{\,-1} \,\leq\, A^{\,-1}\,I_{H}.\]
\end{definition}

\begin{definition}\cite{Sadri}
Let \,$\left\{\,W_{j}\,\right\}_{ j \,\in\, J}$\; be a collection of closed subspaces of \,$H$\; and \,$\left\{\,v_{j}\,\right\}_{ j \,\in\, J}$\; be a collection of positive weights and let \,$\Lambda_{j} \,\in\, \mathcal{B}\,(\,H \,,\, H_{j}\,)$\; for each \,$j \,\in\, J$\; and \,$K \,\in\, \mathcal{B}\,(\,H\,)$.\;Then the family \,$\Lambda \,=\, \left\{\,\left(\,W_{j},\, \Lambda_{j},\, v_{j}\,\right)\,\right\}_{j \,\in\, J}$\; is called a K-g-fusion frame for \,$H$\; if there exist constants \,$0 \,<\, A \,\leq\, B \,<\, \infty$\; such that
\begin{equation}\label{eq2}
A \,\left \|\,K^{\,\ast}\,f \,\right \|^{\,2} \,\leq\, \sum\limits_{\,j \,\in\, J}\,v_{j}^{\,2}\, \left\|\,\Lambda_{j}\,P_{\,W_{j}}\,(\,f\,) \,\right\|^{\,2} \,\leq\, B \, \left\|\, f \, \right\|^{\,2}\; \;\forall\; f \,\in\, H.
\end{equation} 
\end{definition}

\begin{theorem}\cite{Sadri}\label{th1.02}
Let \,$\Lambda$\; be a g-fusion Bessel sequence in \,$H$.\;Then \,$\Lambda$\; is a K-g-fusion frame for \,$H$\; if and only if there exists \,$A \,>\, 0$\; such that \,$S_{\Lambda} \,\geq\, A\,K\,K^{\,\ast}$. 
\end{theorem}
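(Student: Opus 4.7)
The plan is to show that the defining lower bound of a K-g-fusion frame is merely a restatement of the operator inequality $S_\Lambda \geq A\,K\,K^{\,\ast}$, translated between the language of quadratic forms and the language of operator order via Theorem \ref{th1.001}. The upper bound in \eqref{eq2} is already supplied by the standing Bessel hypothesis with some bound $B$, so the equivalence really concerns only the lower bound.

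First I would record two identities for any $f \in H$: on one hand $\|K^{\,\ast}\,f\|^{\,2} = \langle K\,K^{\,\ast}\,f,\, f\rangle$, and on the other hand the identity already established in the preliminaries, namely $\langle S_{\Lambda}\,f,\, f\rangle = \sum_{j \in J} v_{j}^{\,2}\,\|\Lambda_{j}\,P_{\,W_{j}}\,f\|^{\,2}$. Using these, the lower K-g-fusion frame inequality
\[A\,\|K^{\,\ast}\,f\|^{\,2} \,\leq\, \sum_{j \in J}\, v_{j}^{\,2}\,\left\|\,\Lambda_{j}\,P_{\,W_{j}}\,f\,\right\|^{\,2}\]
becomes precisely $\langle A\,K\,K^{\,\ast}\,f,\, f\rangle \leq \langle S_{\Lambda}\,f,\, f\rangle$ for every $f \in H$.

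For the forward direction, I would assume $\Lambda$ is a K-g-fusion frame with lower bound $A$, apply the identities above, and then invoke Theorem \ref{th1.001} (both $A\,K\,K^{\,\ast}$ and $S_{\Lambda}$ being self-adjoint) to conclude $A\,K\,K^{\,\ast} \leq S_{\Lambda}$. For the converse, starting from $S_{\Lambda} \geq A\,K\,K^{\,\ast}$, Theorem \ref{th1.001} gives back $\langle A\,K\,K^{\,\ast}\,f,\, f\rangle \leq \langle S_{\Lambda}\,f,\, f\rangle$ for all $f$, and unfolding the two identities returns the lower K-g-fusion bound; combined with the Bessel bound, this exhibits $\Lambda$ as a K-g-fusion frame.

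There is no substantive obstacle here: the proof is essentially a bookkeeping exercise in rewriting the quadratic-form inequality as an operator inequality. The only thing worth flagging is that one must justify the implicit step that if $\langle T_{1}\,f,\, f\rangle \leq \langle T_{2}\,f,\, f\rangle$ for all $f$ then $T_{1} \leq T_{2}$, which is exactly the content of Theorem \ref{th1.001} applied to the self-adjoint operators $A\,K\,K^{\,\ast}$ and $S_{\Lambda}$.
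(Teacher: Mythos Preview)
Your argument is correct: the equivalence is precisely the translation between the quadratic-form inequality $A\,\|K^{\ast}f\|^{2}\leq \langle S_{\Lambda}f,f\rangle$ and the operator inequality $A\,K K^{\ast}\leq S_{\Lambda}$, with the Bessel hypothesis supplying the upper bound. Note, however, that the paper does not give its own proof of this statement---it is quoted from \cite{Sadri} as a preliminary result---so there is no in-paper argument to compare against; your derivation is the standard one and would be what any proof of this fact looks like.
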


\begin{definition}\cite{Kutyniok}
A family of bounded operators \,$\left\{\,T_{j}\,\right\}_{j \,\in\, J}$\; on \,$H$\; is called a resolution of identity operator on \,$H$\; if for all \,$f \,\in\, H$, we have \,$f \,=\, \sum\limits_{\,j \,\in\, J}\,T_{j}\,(\,f\,)$, provided the series converges unconditionally for all \,$f \,\in\, H$.
\end{definition}

\section{Resolution of the identity operator in $g$-fusion frame}

\smallskip\hspace{.6 cm} In this section, we present several useful results of resolution of the identity operator on a Hilbert space using the theory of \,$g$-fusion frames. 

\begin{theorem}\label{th2}
Let \,$\Lambda \,=\, \left\{\,\left(\,W_{j},\, \Lambda_{j},\, v_{j}\,\right)\,\right\}_{j \,\in\, J}$\; be a g-fusion frame for \,$H$\, with frame bounds \,$C,\, D$\; and \,$S_{\Lambda}$\; be its associated g-fusion frame operator.\;Then the family \,$\left\{\,v_{j}^{\,2}\,P_{\,W_{j}}\,\Lambda_{j}^{\,\ast}\;T_{j}\,\right\}_{j \,\in\, J}$\; is resolution of the identity operator on \,$H$, where \,$T_{j} \,=\, \Lambda_{j}\,P_{\,W_{j}}\,S_{\,\Lambda}^{\,-1},\; j \,\in\, J$.\;Furthermore, for all \,$f \,\in\, H$, we have
\[\dfrac{C}{D^{\,2}}\; \|\,f\,\|^{\,2} \,\leq\, \sum\limits_{\,j \,\in\, J}\,v_{j}^{\,2}\, \|\,T_{j}\,(\,f\,)\,\|^{\,2} \,\leq\, \dfrac{D}{C^{\,2}}\; \|\,f\,\|^{\,2}.\] 
\end{theorem}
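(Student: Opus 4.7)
The plan is to unpack the definition of $T_{j}$ and recognize the resulting sum as $S_{\Lambda}S_{\Lambda}^{\,-1}$. Substituting $T_{j} \,=\, \Lambda_{j}\,P_{\,W_{j}}\,S_{\,\Lambda}^{\,-1}$ into the family, one gets
\[v_{j}^{\,2}\,P_{\,W_{j}}\,\Lambda_{j}^{\,\ast}\,T_{j} \,=\, v_{j}^{\,2}\,P_{\,W_{j}}\,\Lambda_{j}^{\,\ast}\,\Lambda_{j}\,P_{\,W_{j}}\,S_{\,\Lambda}^{\,-1},\]
so for any fixed $f \,\in\, H$, setting $g \,=\, S_{\,\Lambda}^{\,-1}\,f$, the partial sums $\sum_{j \,\in\, F} v_{j}^{\,2}\,P_{\,W_{j}}\,\Lambda_{j}^{\,\ast}\,\Lambda_{j}\,P_{\,W_{j}}\,g$ converge to $S_{\Lambda}\,g$ (this is the definition of the g-fusion frame operator, which is a norm-convergent series). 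Therefore $\sum_{j \,\in\, J} v_{j}^{\,2}\,P_{\,W_{j}}\,\Lambda_{j}^{\,\ast}\,T_{j}\,f \,=\, S_{\Lambda}\,S_{\,\Lambda}^{\,-1}\,f \,=\, f$ as required.

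For the unconditional convergence demanded in the definition of a resolution of the identity, I would reduce it to the unconditional convergence of $\sum_{j} v_{j}^{\,2}\,P_{\,W_{j}}\,\Lambda_{j}^{\,\ast}\,\Lambda_{j}\,P_{\,W_{j}}\,g$ (with $g \,=\, S_{\,\Lambda}^{\,-1}\,f$), which in turn follows from the g-fusion Bessel property: each weak partial sum is controlled by $B\,\|g\|^{\,2}$ via the Cauchy–Schwarz estimate applied through the synthesis operator $T_{\Lambda}$ acting on $l^{\,2}\!\left(\{H_{j}\}_{j \in J}\right)$, and $l^{\,2}$ convergence of sequences of coefficients is unconditional.

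Finally, for the frame-type bounds on $\sum_{j} v_{j}^{\,2}\,\|T_{j}\,f\|^{\,2}$, apply the defining inequality of $\Lambda$ to the vector $S_{\,\Lambda}^{\,-1}\,f$ to obtain
\[C\,\bigl\|\,S_{\,\Lambda}^{\,-1}\,f\,\bigr\|^{\,2} \,\leq\, \sum\limits_{j \,\in\, J} v_{j}^{\,2}\,\bigl\|\,\Lambda_{j}\,P_{\,W_{j}}\,S_{\,\Lambda}^{\,-1}\,f\,\bigr\|^{\,2} \,\leq\, D\,\bigl\|\,S_{\,\Lambda}^{\,-1}\,f\,\bigr\|^{\,2}.\]
The operator inequality $D^{\,-1}\,I_{H} \,\leq\, S_{\,\Lambda}^{\,-1} \,\leq\, C^{\,-1}\,I_{H}$ recorded after the definition of the g-fusion frame operator, squared via the spectral calculus for the positive self-adjoint operator $S_{\,\Lambda}^{\,-1}$, gives $D^{\,-2}\,\|f\|^{\,2} \,\leq\, \|\,S_{\,\Lambda}^{\,-1}\,f\,\|^{\,2} \,\leq\, C^{\,-2}\,\|f\|^{\,2}$, and combining these two chains produces the desired bounds $C/D^{\,2}$ and $D/C^{\,2}$.

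The main obstacle is the minor subtlety in the lower bound on $\|\,S_{\,\Lambda}^{\,-1}\,f\,\|^{\,2}$: the inequality $S_{\,\Lambda}^{\,-1} \,\geq\, D^{\,-1}\,I_{H}$ is a statement about $\langle S_{\,\Lambda}^{\,-1}\,f,\,f\rangle$, not directly about $\|\,S_{\,\Lambda}^{\,-1}\,f\,\|^{\,2}$, so one has to pass to $(S_{\,\Lambda}^{\,-1})^{\,2}$ using self-adjointness and positivity to write $\|\,S_{\,\Lambda}^{\,-1}\,f\,\|^{\,2} \,=\, \langle (S_{\,\Lambda}^{\,-1})^{\,2}\,f,\,f\rangle$ and then invoke the spectral bound $(S_{\,\Lambda}^{\,-1})^{\,2} \,\geq\, D^{\,-2}\,I_{H}$. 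Everything else is direct substitution and an appeal to previously cited results.
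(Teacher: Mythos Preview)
Your proposal is correct and follows essentially the same route as the paper: substitute $T_{j}$ to recognize $S_{\Lambda}S_{\Lambda}^{-1}=I_{H}$, then apply the frame inequality to $S_{\Lambda}^{-1}f$ and bound $\|S_{\Lambda}^{-1}f\|^{2}$ via $D^{-1}I_{H}\leq S_{\Lambda}^{-1}\leq C^{-1}I_{H}$. Your added remarks on unconditional convergence and on passing from $S_{\Lambda}^{-1}\geq D^{-1}I_{H}$ to $(S_{\Lambda}^{-1})^{2}\geq D^{-2}I_{H}$ via spectral calculus supply justifications that the paper simply takes for granted.
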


\begin{proof}
For any \,$f \,\in\, H$, we have the reconstruction formula for \,$g$-fusion frame:
\[f \,=\, S_{\Lambda}\,S_{\Lambda}^{\,-1}\,(\,f\,) \,=\, \sum\limits_{\,j \,\in\, J}\, v_{j}^{\,2}\; P_{\,W_{j}}\, \Lambda_{j}^{\,\ast}\; \Lambda_{j}\, P_{\,W_{j}}\, S_{\,\Lambda}^{\,-1}\,(\,f\,) \,=\, \sum\limits_{\,j \,\in\, J}\, v_{j}^{\,2}\; P_{\,W_{j}}\, \Lambda_{j}^{\,\ast}\; T_{j}\,(\,f\,).\] 
Thus, \,$\left\{\,v_{j}^{\,2}\; P_{\,W_{j}}\, \Lambda_{j}^{\,\ast}\; T_{j}\,\right\}_{j \,\in\, J}$\; is a resolution of the identity operator on \,$H$.\;Since \,$\Lambda$\; is a \,$g$-fusion frame with bounds \,$C$\, and \,$D$, for each \,$f \,\in\, H$, we have
\[\sum\limits_{\,j \,\in\, J}\, v_{j}^{\,2}\, \left\|\,T_{j}\,(\,f\,)\,\right\|^{\,2} \,=\, \sum\limits_{\,j \,\in\, J}\, v_{j}^{\,2}\, \left\|\,\Lambda_{j}\, P_{\,W_{j}}\, S_{\,\Lambda}^{\,-1}\,(\,f\,) \,\right\|^{\,2}\hspace{2cm}\]
\[\hspace{2.4cm} \,\leq\, D\, \left\|\,S_{\,\Lambda}^{\,-1}\,(\,f\,)\,\right\|^{\,2} \,\leq\, D\, \|\,S_{\,\Lambda}^{\,-1}\,\|^{\,2}\, \|\,f\,\|^{\,2}\]
\[\hspace{4cm} \,\leq\, \dfrac{D}{C^{\,2}}\, \|\,f\,\|^{\,2}\; [\;\text{since}\; \,D^{\,-1}\,I_{H} \,\leq\, S_{\,\Lambda}^{\,-1} \,\leq\, C^{\,-1}\,I_{H}\;].\] On the other hand,
\[\sum\limits_{\,j \,\in\, J}\, v_{j}^{\,2}\, \|\,T_{j}\,(\,f\,)\,\|^{\,2} \,=\, \sum\limits_{\,j \,\in\, J}\, v_{j}^{\,2}\, \left\|\,\Lambda_{j}\,P_{\,W_{j}}\,S_{\,\Lambda}^{\,-1}\,(\,f\,)\,\right\|^{\,2}\]
\[\,\geq\, C\, \left\|\,S_{\,\Lambda}^{\,-1}\,(\,f\,)\,\right\|^{\,2} \,\geq\, \dfrac{C}{D^{\,2}}\, \|\,f\,\|^{\,2}.\] Therefore,
\[\dfrac{C}{D^{\,2}}\; \|\,f\,\|^{\,2} \;\leq\; \sum\limits_{\,j \,\in\, J}\,v_{j}^{\,2}\, \|\,T_{j}\,(\,f\,)\,\|^{\,2} \;\leq\; \dfrac{D}{C^{\,2}}\; \|\,f\,\|^{\,2}\; \;\forall\, f \,\in\, H.\] 
\end{proof}

\begin{theorem}\label{th3}
Let \,$\Lambda \,=\, \left\{\,\left(\,W_{j},\, \Lambda_{j},\, v_{j}\,\right)\,\right\}_{j \,\in\, J}$\; be a g-fusion frame for \,$H$\; with frame bounds \,$C,\, D$\; and let \,$T_{j} \,:\, H \,\to\, H_{j}$\; be a bounded operator such that \,$\left\{\,v_{j}^{\,2}\;P_{\,W_{j}}\,\Lambda_{j}^{\,\ast}\;T_{j}\,\right\}_{j \,\in\, J}$\; is a resolution of the identity operator on \;$H$.\,Then
\[\dfrac{1}{D}\; \left\|\,\sum\limits_{\,j \,\in\, J}\, v_{j}^{\,2}\; P_{\,W_{j}}\, \Lambda_{j}^{\,\ast}\; T_{j}\,(\,f\,) \,\right\|^{\,2} \,\leq\, \sum\limits_{\,j \,\in\, J}\, v_{j}^{\,2}\; \left\|\,T_{j}\,(\,f\,)\,\right\|^{\,2}\; \;\forall\; f \,\in\, H.\]  
\end{theorem}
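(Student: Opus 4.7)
The plan is to exploit the resolution-of-the-identity hypothesis to replace $f$ by $\sum_j v_j^{\,2}\,P_{W_j}\Lambda_j^{\,\ast} T_j(f)$ inside the inner product $\langle f,g\rangle$, and then to estimate via two applications of the Cauchy--Schwarz inequality, finally invoking the upper $g$-fusion frame bound $D$ for $\Lambda$.

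First I would observe that, because $\{v_j^{\,2}\,P_{W_j}\Lambda_j^{\,\ast}T_j\}_{j\in J}$ is a resolution of the identity, we actually have
\[
\sum_{j\,\in\, J}\,v_j^{\,2}\,P_{W_j}\,\Lambda_j^{\,\ast}\,T_j(f) \,=\, f \qquad \forall\, f \,\in\, H,
\]
so the left-hand side of the desired inequality is just $\tfrac{1}{D}\|f\|^{\,2}$. The task therefore reduces to proving
\[
\|f\|^{\,2} \,\leq\, D\,\sum_{j\,\in\, J}\,v_j^{\,2}\,\|T_j(f)\|^{\,2}, \qquad f \,\in\, H.
\]

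Next, for an arbitrary $g \,\in\, H$, I would use the resolution identity and the self-adjointness of $P_{W_j}$ to write
\[
\langle\,f \,,\, g\,\rangle \,=\, \Big\langle\, \sum_{j\,\in\, J}\,v_j^{\,2}\,P_{W_j}\,\Lambda_j^{\,\ast}\,T_j(f) \,,\, g\,\Big\rangle \,=\, \sum_{j\,\in\, J}\,v_j^{\,2}\,\bigl\langle\,T_j(f)\,,\,\Lambda_j\,P_{W_j}(g)\,\bigr\rangle.
\]
Applying Cauchy--Schwarz termwise, then the Cauchy--Schwarz inequality on the $\ell^{\,2}$-sum, and finally the upper $g$-fusion bound $\sum_j v_j^{\,2}\|\Lambda_j P_{W_j}(g)\|^{\,2} \leq D\|g\|^{\,2}$, I would obtain
\[
|\langle\,f \,,\, g\,\rangle| \,\leq\, \Big(\sum_{j\,\in\, J}\,v_j^{\,2}\,\|T_j(f)\|^{\,2}\Big)^{1/2}\,\sqrt{D}\,\|g\|.
\]
Setting $g = f$ (or equivalently taking the supremum over unit vectors) yields $\|f\|^{\,2} \leq D\sum_j v_j^{\,2}\|T_j(f)\|^{\,2}$, which combined with the opening observation is the claimed inequality.

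There is no real obstacle here; the only thing to watch is the bookkeeping of the adjoint move $\langle P_{W_j}\Lambda_j^{\,\ast}u,v\rangle = \langle u,\Lambda_j P_{W_j} v\rangle$, which uses that $P_{W_j}$ is an orthogonal projection. The two applications of Cauchy--Schwarz (one to each summand, then to the resulting product of two $\ell^{\,2}$-sums) are routine, and the final estimate slots directly into the upper bound in the $g$-fusion frame inequality (\ref{eq1}) for $\Lambda$.
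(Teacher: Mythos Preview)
Your argument is correct and follows essentially the same route as the paper: expand the inner product via the adjoint move $\langle P_{W_j}\Lambda_j^{\,\ast}u,\,g\rangle=\langle u,\,\Lambda_j P_{W_j}g\rangle$, apply Cauchy--Schwarz twice, and close with the upper $g$-fusion bound $D$. The only organizational difference is that the paper first proves the inequality for every finite partial sum $g=\sum_{j\in I} v_j^{\,2}P_{W_j}\Lambda_j^{\,\ast}T_j(f)$ (never invoking the resolution-of-identity hypothesis until the final passage to the limit), whereas you immediately use that hypothesis to replace the full sum by $f$; your version is slightly cleaner but proves marginally less, since the paper's inequality also holds for each finite truncation.
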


\begin{proof}
Assume \,$I \,\subset\, J$\, with \,$|\,I\,| \,<\, \infty$.\;If our inequality holds for all finite subsets then it would holds for all subset.\;Let \,$f \,\in\, H$\; and set \,$g \,=\, \sum\limits_{\,j \,\in\, I}\, v_{j}^{\,2}\, P_{\,W_{j}}\, \Lambda_{j}^{\,\ast}\, T_{j}\,(\,f\,)$.\;Then
\[\|\,g\,\|^{\,4} \;=\; (\,\left<\,g \,,\, g\,\right>\,)^{\,2} \;=\; \left(\,\left<\,g \;,\; \sum\limits_{\,j \,\in\, I}\,v_{j}^{\,2}\; P_{\,W_{j}}\, \Lambda_{j}^{\,\ast}\; T_{j}\,(\,f\,)\,\right>\,\right)^{\,2}\]
\[\;=\; \left(\,\sum\limits_{\,j \,\in\, I}\, v_{\,j}\; \left<\,\Lambda_{j}\, P_{\,W_{j}}\,(\,g\,) \;,\; v_{j}\, T_{j}\,(\,f\,) \,\right>\,\right)^{\,2}\hspace{.9cm}\]
\[\leq\; \left(\,\sum\limits_{\,j \,\in\, I}\, v_{j}\, \left\|\,\Lambda_{j}\, P_{\,W_{j}}\,(\,g\,) \,\right\|\; \left\|\,v_{j}\,T_{j}\,(\,f\,) \,\right\|\,\right)^{\,2}\hspace{.5cm}\]
\[\hspace{1.2cm}\leq\, \left(\,\sum\limits_{\,j \,\in\, I}\, v_{j}^{\,2}\,\left\|\,\Lambda_{j}\, P_{\,W_{j}}\,(\,g\,) \,\right\|^{\,2}\,\right)\,\left(\, \sum\limits_{\,j \,\in\, I}\, \left\|\,v_{j}\, T_{j}\,(\,f\,) \,\right\|^{\,2}\,\right)\]
\[\hspace{2.2cm}\leq\; D\; \|\,g\,\|^{\,2}\; \sum\limits_{\,j \,\in\, I}\,\left\|\, v_{j}\, T_{j}\,(\,f\,) \,\right\|^{\,2}\; [\;\text{since}\, \,\Lambda\; \text{is a $g$-fusion frame}\;].\]
\[\Rightarrow\, \dfrac{1}{D}\; \|\,g\,\|^{\,2} \;\leq\; \sum\limits_{\,j \,\in\, I}\, \left\|\,v_{j}\, T_{j}\,(\,f\,) \,\right\|^{\,2}\hspace{2cm}\]
\[\Rightarrow\, \dfrac{1}{D}\; \left\|\,\sum\limits_{\,j \,\in\, I}\,v_{j}^{\,2}\; P_{\,W_{j}}\, \Lambda_{j}^{\,\ast}\; T_{j}\,(\,f\,)\,\right\|^{\,2} \;\leq\; \sum\limits_{\,j \,\in\, I}\, v_{\,j}^{\,2}\; \left\|\,T_{\,j}\,(\,f\,)\,\right\|^{\,2}\; \;\forall\; f \,\in\, H.\]
Since the inequality holds for any finite subset \,$I \,\subset\, J$, we have
\[\dfrac{1}{D}\; \left\|\,\sum\limits_{\,j \,\in\, J}\, v_{j}^{\,2}\; P_{\,W_{j}}\, \Lambda_{j}^{\,\ast}\; T_{j}\,(\,f\,) \,\right\|^{\,2} \,\leq\, \sum\limits_{\,j \,\in\, J}\, v_{j}^{\,2}\; \left\|\,T_{j}\,(\,f\,)\,\right\|^{\,2}\; \;\forall\; f \,\in\, H.\]
This completes the proof.            
\end{proof}

\begin{theorem}
Let \,$\Lambda \,=\, \left\{\,\left(\,W_{j},\, \Lambda_{j},\, v_{j}\,\right)\,\right\}_{j \,\in\, J}$\; be a g-fusion frame for \,$H$\; with frame bounds \,$C,\, D$\; and let \,$T_{j} \,:\, H \,\to\, H_{j}$\; be a bounded operator such that \,$\left\{\,v_{j}^{\,2}\;P_{\,W_{j}}\,\Lambda_{j}^{\,\ast}\;T_{j}\,\right\}_{j \,\in\, J}$\; is a resolution of the identity operator on \,$H$.\;If \;$T^{\,\ast}_{j}\,\Lambda_{j}\,P_{\,W_{j}} \,=\, T_{j}$, then 
\[\dfrac{1}{D}\; \|\,f\,\|^{\,2} \,\leq\, \sum\limits_{\,j \,\in\, J}\,v_{j}^{\,2}\; \|\,T_{j}\,(\,f\,)\,\|^{\,2} \,\leq\, D\, E \;\|\,f\,\|^{\,2}\; \,\forall\, f \,\in\, H,\]where \,$E \,=\, \sup\limits_{\,j}\, \|\,T_{j}\,\|^{\,2} \,<\, \infty$. 
\end{theorem}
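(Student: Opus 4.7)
The proof will split naturally into the two inequalities, and neither side should present serious difficulty because both hypotheses are designed to feed directly into tools already in the paper.

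For the lower bound, the plan is to invoke Theorem \ref{th3} on the nose. By assumption $\{v_j^{\,2}\,P_{W_j}\Lambda_j^{\,\ast}\,T_j\}_{j\in J}$ is a resolution of the identity, so for every $f\in H$ we have $f = \sum_{j\in J} v_j^{\,2}\,P_{W_j}\Lambda_j^{\,\ast}\,T_j(f)$. Substituting this identity into the left-hand side of the inequality furnished by Theorem \ref{th3} gives
\[
\frac{1}{D}\,\|f\|^{\,2} \,=\, \frac{1}{D}\,\left\|\sum_{j\in J} v_j^{\,2}\,P_{W_j}\,\Lambda_j^{\,\ast}\,T_j(f)\right\|^{\,2} \,\leq\, \sum_{j\in J} v_j^{\,2}\,\|T_j(f)\|^{\,2},
\]
which is precisely the lower estimate. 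No additional work is needed here.

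For the upper bound, the plan is to exploit the hypothesis $T_j^{\,\ast}\Lambda_j P_{W_j} = T_j$, which lets us rewrite $T_j(f) = T_j^{\,\ast}\Lambda_j P_{W_j}(f)$ and then apply the operator-norm estimate $\|T_j^{\,\ast}\| = \|T_j\|$. This yields $\|T_j(f)\|^{\,2} \leq \|T_j\|^{\,2}\,\|\Lambda_j P_{W_j}(f)\|^{\,2} \leq E\,\|\Lambda_j P_{W_j}(f)\|^{\,2}$ for each $j$. Summing against the weights $v_j^{\,2}$ and invoking the upper g-fusion frame bound $D$ of $\Lambda$ gives
\[
\sum_{j\in J} v_j^{\,2}\,\|T_j(f)\|^{\,2} \,\leq\, E\,\sum_{j\in J} v_j^{\,2}\,\|\Lambda_j P_{W_j}(f)\|^{\,2} \,\leq\, D\,E\,\|f\|^{\,2}.
\]

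Combining the two estimates yields the desired double inequality. The only mildly delicate point is ensuring the supremum $E=\sup_j \|T_j\|^{\,2}$ is finite, but this is built into the statement, so there is no real obstacle; both halves reduce to one-line applications of, respectively, Theorem \ref{th3} and the defining upper bound of the g-fusion frame $\Lambda$.
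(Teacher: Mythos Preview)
Your proposal is correct and follows essentially the same route as the paper's own proof: the lower bound is obtained by substituting the resolution of the identity into Theorem~\ref{th3}, and the upper bound comes from rewriting $T_j(f)=T_j^{\ast}\Lambda_j P_{W_j}(f)$, bounding by $\|T_j\|^{2}\,\|\Lambda_j P_{W_j}(f)\|^{2}$, pulling out $E=\sup_j\|T_j\|^{2}$, and applying the upper g-fusion frame bound $D$.
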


\begin{proof}
Since \,$\left\{\,v_{j}^{\,2}\;P_{\,W_{j}}\,\Lambda_{j}^{\,\ast}\;T_{j}\,\right\}_{j \,\in\, J}$\; is a resolution of the identity on \,$H$, 
\[ f \;=\; \sum\limits_{\,j \,\in\, J}\, v_{j}^{\,2}\; P_{\,W_{j}}\,\Lambda_{j}^{\,\ast}\; T_{j}\,(\,f\,),\; f \,\in\, H.\]
Now, for each \,$f \,\in\, H$, using Theorem (\ref{th3}), we get 
\[\dfrac{1}{D}\; \|\,f\,\|^{\,2}\; =\; \dfrac{1}{D}\;\left\|\,\sum\limits_{\,j \,\in\, J}\, v_{j}^{\,2}\; P_{\,W_{j}}\,\Lambda_{j}^{\,\ast}\; T_{j}\,(\,f\,)\,\right\|^{\,2} \;\leq\; \sum\limits_{\,j \,\in\, J}\, v_{j}^{\,2}\; \left\|\,T_{j}\,(\,f\,)\,\right\|^{\,2}\]
\[\hspace{1.6cm}=\; \sum\limits_{\,j \,\in\, J}\, v_{j}^{\,2}\; \left\|\,T^{\,\ast}_{j}\, \Lambda_{j}\, P_{\,W_{j}}\,(\,f\,)\,\right\|^{\,2}\; \;[\;\text{since}\; \,T^{\,\ast}_{j}\,\Lambda_{j}\,P_{\,W_{j}} \,=\, T_{j}\;]\]
\[\leq\; \sum\limits_{\,j \,\in\, J}\, v_{j}^{\,2}\; \|\,T_{j}\,\|^{\,2}\; \left\|\,\Lambda_{j}\, P_{\,W_{j}}\,(\,f\,)\,\right\|^{\,2}\hspace{2.1cm}\]
\[\hspace{1.45cm}\leq\; E\; \sum\limits_{\,j \,\in\, J}\, v_{j}^{\,2}\; \|\,\Lambda_{j}\, P_{\,W_{j}}\,(\,f\,)\,\|^{\,2}\; \;[\;\text{using}\; \,E \,=\, \sup\limits_{\,j}\, \|\,T_{\,j}\,\|^{\,2}\;]\]
\[ \;\leq\; D\, E \;\|\,f\,\|^{\,2}\; \;[\;\text{since}\; \,\Lambda\; \text{is a $g$-fusion frame}\;].\hspace{.5cm}\] This completes the proof.   
\end{proof}

\begin{theorem}
Let \,$\left\{\,W_{j}\,\right\}_{j \,\in\, J}$\, be a family of closed subspaces of \,$H$\, and \,$\left\{\,v_{j}\,\right\}_{j \,\in\, J}$\, be a family of bounded weights and let \,$\Lambda_{j} \,\in\, \mathcal{B}\,(\,H \,,\, H_{j}),\, j \,\in\, J$.\;Then \,$\Lambda \,=\, \left\{\,\left(\,W_{j},\, \Lambda_{j},\, v_{j}\,\right)\,\right\}_{j \,\in\, J}$\, is a g-fusion frame for \,$H$\, if the following conditions are hold:
\begin{itemize}
\item[(I)]\;For all \;$f \,\in\, H$, there exists \,$A \,>\, 0$\; such that
\[\sum\limits_{\,j \,\in\, J}\, \left\|\,\Lambda_{j}\, P_{\,W_{j}}\,(\,f\,) \,\right\|^{\,2} \;\leq\; \dfrac{1}{A} \; \left\|\, f \, \right\|^{\,2}.\]
\item[(II)]\;$\left\{\,v_{j}\, P_{\,W_{j}}\,\Lambda_{j}^{\,\ast}\; \Lambda_{j}\, P_{\,W_{j}}\,\right\}_{j \,\in\, J}$\; is a resolution of the identity operator on \,$H$.
\end{itemize}
\end{theorem}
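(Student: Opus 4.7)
The plan is to exploit the identity decomposition coming from hypothesis (II) to produce a lower frame bound, and to use the Bessel-type estimate in (I) together with the boundedness of the weights \,$\{v_j\}$\, to produce an upper frame bound. Write \,$M \,=\, \sup_{j \,\in\, J}\, v_j \,<\, \infty$, which is finite by the assumed boundedness of the weights.

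For the upper bound, I would simply estimate
\[\sum\limits_{j \,\in\, J}\, v_{\,j}^{\,2}\; \left\|\,\Lambda_{j}\,P_{\,W_{j}}\,(\,f\,)\,\right\|^{\,2} \,\leq\, M^{\,2}\, \sum\limits_{j \,\in\, J}\, \left\|\,\Lambda_{j}\,P_{\,W_{j}}\,(\,f\,)\,\right\|^{\,2} \,\leq\, \dfrac{M^{\,2}}{A}\; \|\,f\,\|^{\,2}\]
using hypothesis (I) at the last step. This gives an upper g-fusion bound of \,$B \,=\, M^{\,2}/A$.

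For the lower bound, I would start from hypothesis (II), which unpacks as
\[\|\,f\,\|^{\,2} \,=\, \left<\,f \,,\, f\,\right> \,=\, \left<\,\sum\limits_{j \,\in\, J}\, v_{\,j}\, P_{\,W_{j}}\,\Lambda_{j}^{\,\ast}\,\Lambda_{j}\,P_{\,W_{j}}\,(\,f\,) \,,\, f\,\right> \,=\, \sum\limits_{j \,\in\, J}\, v_{\,j}\, \left\|\,\Lambda_{j}\,P_{\,W_{j}}\,(\,f\,)\,\right\|^{\,2}.\]
The key move is to split \,$v_{\,j}\,\|\,\Lambda_{j}\,P_{\,W_{j}}\,(\,f\,)\,\|^{\,2} \,=\, \bigl(\,v_{\,j}\,\|\,\Lambda_{j}\,P_{\,W_{j}}\,(\,f\,)\,\|\,\bigr)\, \bigl(\,\|\,\Lambda_{j}\,P_{\,W_{j}}\,(\,f\,)\,\|\,\bigr)$\, and apply the Cauchy--Schwarz inequality to obtain
\[\|\,f\,\|^{\,2} \,\leq\, \left(\,\sum\limits_{j \,\in\, J}\, v_{\,j}^{\,2}\,\left\|\,\Lambda_{j}\,P_{\,W_{j}}\,(\,f\,)\,\right\|^{\,2}\,\right)^{\,1/2}\, \left(\,\sum\limits_{j \,\in\, J}\, \left\|\,\Lambda_{j}\,P_{\,W_{j}}\,(\,f\,)\,\right\|^{\,2}\,\right)^{\,1/2}.\]
Applying (I) to the second factor, squaring and rearranging yields \,$A\,\|\,f\,\|^{\,2} \,\leq\, \sum_{j}\, v_{\,j}^{\,2}\,\|\,\Lambda_{j}\,P_{\,W_{j}}\,(\,f\,)\,\|^{\,2}$, which is the desired lower g-fusion inequality with bound \,$A$.

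The only subtle point is the choice of factorisation in the Cauchy--Schwarz step, since a naive split (for instance, absorbing all of \,$v_{\,j}$\, into one factor) would not leave \,$v_{\,j}^{\,2}$\, inside the sum that we wish to bound from below. Beyond that, everything is a direct assembly of the two hypotheses and the definition of a g-fusion frame in (\ref{eq1}).
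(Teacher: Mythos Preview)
Your proposal is correct and follows essentially the same route as the paper: the upper bound via $\sup_j v_j^{\,2}$ together with hypothesis (I), and the lower bound via the resolution of identity combined with the Cauchy--Schwarz split $v_j\,\|\Lambda_j P_{W_j} f\|^2 = (v_j\,\|\Lambda_j P_{W_j} f\|)\cdot(\|\Lambda_j P_{W_j} f\|)$ and then (I) again. The only cosmetic difference is that the paper squares first (working with $\|f\|^4$) before applying Cauchy--Schwarz, whereas you apply Cauchy--Schwarz and then square; the algebra and the resulting bounds $A$ and $M^2/A$ are identical.
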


\begin{proof}
Since \,$\left\{\,v_{j}\, P_{\,W_{j}}\, \Lambda_{j}^{\,\ast}\; \Lambda_{j}\, P_{\,W_{j}}\,\right\}_{j \,\in\, J}$\, is a resolution of the identity operator on \,$H$, for \,$f \,\in\, H$, we have
\[f \;=\; \sum\limits_{\,j \,\in\, J}\, v_{j}\, P_{\,W_{j}}\, \Lambda_{j}^{\,\ast}\; \Lambda_{j}\, P_{\,W_{j}}\,(\,f\,).\]By Cauchy-Schwarz inequality, we have 
\[\|\,f\,\|^{\,4} \,=\, \left(\,\left<\,f \,,\, f\,\right>\,\right)^{\,2} \,=\, \left(\,\left<\,\sum\limits_{\,j \,\in\, J}\, v_{j}\, P_{\,W_{j}}\, \Lambda_{j}^{\,\ast}\, \Lambda_{j}\, P_{\,W_{j}}\,(\,f\,) \,,\, f\,\right>\,\right)^{\,2}\]
\[\hspace{1.2cm} \,=\, \left(\,\sum\limits_{\,j \,\in\, J}\, v_{j}\,\left<\,\Lambda_{j}\, P_{\,W_{j}}\,(\,f\,) \;,\; \Lambda_{j}\,P_{\,W_{j}}\,(\,f\,)\,\right>\,\right)^{2}\,=\, \left(\,\sum\limits_{\,j \,\in\, J}\, v_{j}\, \left\|\,\Lambda_{j}\, P_{\,W_{j}}\,(\,f\,)\,\right\|^{\,2}\,\right)^{2}\]
\[ \leq\, \left(\,\sum\limits_{\,j \,\in\, J}\, \left\|\,\Lambda_{j}\, P_{\,W_{j}}\,(\,f\,)\,\right\|^{\,2}\,\right)\,  \left(\,\sum\limits_{\,j \,\in\, J}\, v_{j}^{\,2}\; \left\|\,\Lambda_{j}\, P_{\,W_{j}}\,(\,f\,)\,\right\|^{\,2}\,\right)\hspace{1.6cm}\]
\[\leq\; \dfrac{1}{A}\; \left\|\, f \, \right\|^{\,2}\; \sum\limits_{\,j \,\in\, J}\, v_{j}^{\,2}\; \left\|\,\Lambda_{j}\,P_{\,W_{j}}\,(\,f\,)\,\right\|^{\,2}\; \;[\;\text{using given condition $(\,I\,)$}\;]\]
\[\Rightarrow\, A\, \|\,f\,\|^{\,2} \,\leq\, \sum\limits_{j \,\in\, J}\, v_{j}^{\,2}\; \left\|\,\Lambda_{j}\, P_{\,W_{j}}\,(\,f\,)\,\right\|^{\,2}.\hspace{4.5cm}\] 
On the other hand,
\[\sum\limits_{j \,\in\, J}\, v_{j}^{\,2}\; \left\|\,\Lambda_{j}\, P_{\,W_{j}}\,(\,f\,)\,\right\|^{\,2} \;\leq\; B\, \sum\limits_{\,j \,\in\, J}\, \left\|\,\Lambda_{j}\, P_{\,W_{j}}\,(\,f\,) \,\right\|^{\,2}\; \;[\;\text{where}\; B \,=\, \sup\limits_{j \,\in\, J}\, \left\{\,v_{j}^{\,2}\,\right\}\;]\] 
\[\hspace{2.5cm}\leq\; \dfrac{\,B}{A}\; \|\,f\,\|^{\,2}\; \;[\;\text{using given condition $(\,I\,)$}\;]\] 
and hence \,$\Lambda$\; is a \,$g$-fusion frame.
\end{proof}

\section{$g$-atomic subspace}

\smallskip\hspace{.6 cm} In this section, we define generalized atomic subspace or \,$g$-atomic subspace of a Hilbert space with respect to a bounded linear operator.

\begin{definition}\label{def1}
Let \,$K \,\in\, \mathcal{B}\,(\,H\,)$\; and \,$\left\{\,W_{j}\,\right\}_{ j \,\in\, J}$\; be a collection of closed subspaces of \,$H$, let \,$\left\{\,v_{j}\,\right\}_{ j \,\in\, J}$\; be a collection of positive weights and \,$\Lambda_{j} \,\in\, \mathcal{B}\,(\,H \,,\, H_{j}\,)$\; for each \,$j \,\in\, J$.\;Then the family \;$\Lambda \,=\, \left\{\,\left(\,W_{j},\, \Lambda_{j},\, v_{j}\,\right)\,\right\}_{j \,\in\, J}$\; is said to be a generalized atomic subspace or g-atomic subspace of \,$H$\; with respect to \,$K$\; if the following statements hold:
\begin{itemize}
\item[(I)]\;$\Lambda$\; is a g-fusion Bessel sequence in \,$H$.
\item[(II)]\; For every \,$f \,\in\, H$, there exists \,$\left\{\,f_{\,j}\,\right\}_{j \,\in\, J} \,\in\, l^{\,2}\left(\,\left\{\,H_{j}\,\right\}_{ j \,\in\, J}\,\right)$\; such that 
\[K\,(\,f\,) \,=\, \sum\limits_{\,j \,\in\, J}\, v_{j}\, P_{\,W_{j}}\,\Lambda_{j}^{\,\ast}\; f_{\,j}\; \;\text{and}\; \;\left\|\,\left\{\,f_{\,j}\,\right\}_{j \,\in\, J}\,\right\|_{l^{\,2}\left(\,\left\{\,H_{j}\,\right\}_{ j \,\in\, J}\,\right)} \; \leq\; C\; \|\,f\,\|_{H}\]\; for some \,$C \,>\, 0$. 
\end{itemize}
\end{definition}

\begin{theorem}\label{thm1}
Let \,$K \,\in\, \mathcal{B}\,(\,H\,)$\; and \,$\left\{\,W_{j}\,\right\}_{ j \,\in\, J}$\; be a collection of closed subspaces of \,$H$, let \,$\left\{\,v_{j}\,\right\}_{ j \,\in\, J}$\; be a collection of positive weights and \,$\Lambda_{j} \,\in\, \mathcal{B}\,(\,H \,,\, H_{j}\,)$\; for each \,$j \,\in\, J$.\;Then the following statements are equivalent:
\begin{itemize}
\item[(I)]\; $\Lambda \,=\, \left\{\,\left(\,W_{j},\, \Lambda_{j},\, v_{j}\,\right)\,\right\}_{j \,\in\, J}$\; is a g-atomic subspace of \,$H$\, with respect to \,$K$.
\item[(II)]\; \,$\Lambda$\; is a K-g-fusion frame for \,$H$.
\end{itemize}
\end{theorem}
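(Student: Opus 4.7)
The plan is to reduce both directions to Douglas' factorization theorem (Theorem \ref{th1}) combined with the range characterization of K-g-fusion frames (Theorem \ref{th1.02}). The key observation is that condition (II) of Definition \ref{def1} can be rephrased purely in terms of the synthesis operator: the equation $K(f) \,=\, \sum_{j \in J} v_j\, P_{W_j}\, \Lambda_j^{\,\ast}\, f_j$ is precisely $K(f) \,=\, T_\Lambda(\{f_j\})$. Hence condition (II) asserts that $\mathcal{R}(K) \,\subseteq\, \mathcal{R}(T_\Lambda)$, while Theorem \ref{th1.02} says, under the g-fusion Bessel hypothesis, that $\Lambda$ is a K-g-fusion frame if and only if $S_\Lambda \,=\, T_\Lambda\, T_\Lambda^{\,\ast} \,\geq\, A\, K\, K^{\,\ast}$ for some $A \,>\, 0$.

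For (I) $\Rightarrow$ (II), I would first note that (I) explicitly assumes $\Lambda$ is a g-fusion Bessel sequence, supplying the upper K-g-fusion frame bound. The existence of $\{f_j\}_{j \in J}$ with $K(f) \,=\, T_\Lambda(\{f_j\})$ for every $f \,\in\, H$ immediately yields $\mathcal{R}(K) \,\subseteq\, \mathcal{R}(T_\Lambda)$, so Theorem \ref{th1} gives $K\, K^{\,\ast} \,\leq\, \lambda^{\,2}\, T_\Lambda\, T_\Lambda^{\,\ast} \,=\, \lambda^{\,2}\, S_\Lambda$ for some $\lambda \,>\, 0$. Applying Theorem \ref{th1.02} then produces the lower K-g-fusion frame bound, so $\Lambda$ is a K-g-fusion frame.

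For (II) $\Rightarrow$ (I), the upper inequality in (\ref{eq2}) already makes $\Lambda$ a g-fusion Bessel sequence, establishing part (I) of Definition \ref{def1}. Applying Theorem \ref{th1.02} in the reverse direction yields $K\, K^{\,\ast} \,\leq\, A^{\,-1}\, T_\Lambda\, T_\Lambda^{\,\ast}$, so Douglas' theorem furnishes a bounded operator $L \,:\, H \,\to\, l^{\,2}(\{H_j\}_{j \in J})$ with $K \,=\, T_\Lambda\, L$. Setting $\{f_j\}_{j \in J} \,:=\, L(f)$ for each $f \,\in\, H$, one has $K(f) \,=\, \sum_{j \in J} v_j\, P_{W_j}\, \Lambda_j^{\,\ast}\, f_j$ and $\|\{f_j\}\|_{l^{\,2}} \,=\, \|L(f)\| \,\leq\, \|L\|\, \|f\|$, verifying (II) with $C \,=\, \|L\|$.

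The main conceptual step is the translation between the pointwise existence statement in Definition \ref{def1} and the operator-theoretic range inclusion $\mathcal{R}(K) \,\subseteq\, \mathcal{R}(T_\Lambda)$; once this dictionary is in place, Theorems \ref{th1} and \ref{th1.02} do the rest of the work of converting between range inclusions, operator inequalities on positive operators, and the defining K-g-fusion frame inequality. No delicate estimates or convergence arguments are needed beyond what is already packaged in those two theorems.
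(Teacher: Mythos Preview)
Your proof is correct. The direction (II) $\Rightarrow$ (I) is essentially identical to the paper's: both obtain $A\,K\,K^{\ast} \leq T_{\Lambda}\,T_{\Lambda}^{\ast}$ from the lower frame inequality, invoke Douglas to factor $K = T_{\Lambda}\,L$, and then set $\{f_j\} = L(f)$.

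Where you diverge is in (I) $\Rightarrow$ (II). The paper argues directly: it writes $\|K^{\ast}f\| = \sup_{\|g\|=1} |\langle f, Kg\rangle|$, substitutes the atomic decomposition $Kg = \sum_j v_j\,P_{W_j}\,\Lambda_j^{\ast}\,f_j$, and applies Cauchy--Schwarz together with the bound $\|\{f_j\}\| \leq C\,\|g\|$ to obtain the explicit lower frame constant $1/C^{2}$. You instead go through the operator-theoretic route: the existence of the decomposition gives $\mathcal{R}(K) \subseteq \mathcal{R}(T_{\Lambda})$, Douglas converts this to $K\,K^{\ast} \leq \lambda^{2}\,S_{\Lambda}$, and Theorem~\ref{th1.02} finishes. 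Your argument is cleaner and more conceptual; it also reveals that the uniform bound $\|\{f_j\}\| \leq C\,\|f\|$ in Definition~\ref{def1}(II) is not actually needed for this implication, since range inclusion alone suffices for Douglas. The paper's approach, on the other hand, yields the explicit lower bound $1/C^{2}$ without appealing to Theorem~\ref{th1.02}.
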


\begin{proof}$(I) \,\Rightarrow\, (II)$
Suppose \,$\Lambda$\, is a \,$g$-atomic subspace of \,$H$\; with respect to \,$K$.\;Then \,$\Lambda$\, is a \,$g$-fusion Bessel sequence, So, there exists \,$B \,>\, 0$\; such that
\[\sum\limits_{\,j \,\in\, J}\, v_{j}^{\,2}\; \left\|\,\Lambda_{j}\, P_{\,W_{j}}\,(\,f\,) \,\right\|^{\,2} \,\leq\, B\, \left\|\, f \, \right\|^{\,2}\; \;\forall\; f \,\in\, H.\] 
Now, for any \,$f \,\in\, H$, we have
\[\left \|\,K^{\,\ast}\,f \,\right \| \,=\, \sup\limits_{\,\|\,g\,\| \,=\, 1}\,\left|\,\left<\,K^{\,\ast}\,f \,,\, g\,\right>\,\right| \,=\, \sup\limits_{\,\|\,g\,\| \,=\, 1}\,\left|\,\left<\,f \,,\, K\,g\,\right>\,\right|,\]
by definition (\,\ref{def1}\,), for \,$g \,\in\, H$, there exists \,$\left\{\,f_{\,j}\,\right\}_{j \,\in\, J} \,\in\, l^{\,2}\left(\,\left\{\,H_{j}\,\right\}_{ j \,\in\, J}\,\right)$\; such that 
\[K\,(\,g\,) \,=\, \sum\limits_{\,j \,\in\, J}\, v_{j}\, P_{\,W_{j}}\,\Lambda_{j}^{\,\ast}\; f_{\,j}\; \;\text{and}\; \;\left\|\,\left\{\,f_{\,j}\,\right\}_{j \,\in\, J}\,\right\|_{l^{\,2}\left(\,\left\{\,H_{j}\,\right\}_{ j \,\in\, J}\,\right)} \; \leq\; C\; \|\,g\,\|_{H}\]\; for some \,$C \,>\, 0$.\;Thus
\[\left \|\,K^{\,\ast}\,f \,\right \| \,=\, \sup\limits_{\,\|\,g\,\| \,=\, 1}\,\left|\,\left<\,f \,,\, \sum\limits_{\,j \,\in\, J}\, v_{j}\, P_{\,W_{j}}\,\Lambda_{j}^{\,\ast}\,f_{\,j}\,\right>\,\right| \,=\, \sup\limits_{\,\|\,g\,\| \,=\, 1}\,\left|\,\sum\limits_{\,j \,\in\, J}\, v_{j}\,\left<\,\Lambda_{j}\,P_{\,W_{j}}\,(\,f\,) \,,\, \,f_{\,j}\,\right>\,\right|\]
\[\leq\, \sup\limits_{\,\|\,g\,\| \,=\, 1}\,\left(\,\sum\limits_{\,j \,\in\, J}\, v_{j}^{\,2}\; \left\|\,\Lambda_{j}\, P_{\,W_{j}}\,(\,f\,) \,\right\|^{\,2}\,\right)^{\dfrac{1}{2}}\,\left(\,\sum\limits_{\,j \,\in\, J}\,\left\|\,f_{\,j}\,\right\|^{\,2}\,\right)^{\dfrac{1}{2}}\]
\[\leq\, C\,\sup\limits_{\,\|\,g\,\| \,=\, 1}\,\left(\,\sum\limits_{\,j \,\in\, J}\, v_{j}^{\,2}\; \left\|\,\Lambda_{j}\, P_{\,W_{j}}\,(\,f\,) \,\right\|^{\,2}\,\right)^{\dfrac{1}{2}}\,\|\,g\,\|\hspace{1.7cm}\]
\[\Rightarrow\, \dfrac{1}{C^{\,2}}\,\left \|\,K^{\,\ast}\,f \,\right \|^{\,2} \,\leq\, \sum\limits_{\,j \,\in\, J}\, v_{j}^{\,2}\; \left\|\,\Lambda_{j}\, P_{\,W_{j}}\,(\,f\,) \,\right\|^{\,2}.\]
Therefore, \,$\Lambda$\; is a \,$K$-$g$-fusion frame for \,$H$\, with bounds \,$\dfrac{1}{C^{\,2}}$\, and \,$B$.\\\\
$(II) \,\Rightarrow\, (I)$\, Suppose that \,$\Lambda$\; is a \,$K$-$g$-fusion frame with the corresponding synthesis operator \,$T_{\Lambda}$.\;Then obviously \,$\Lambda$\; is a \,$g$-fusion Bessel sequence in \,$H$.\;Now, for each \,$f \,\in\, H$,
\[A\; \left \|\,K^{\,\ast}\,f \,\right \|^{\,2} \,\leq\, \sum\limits_{\,j \,\in\, J}\, v_{j}^{\,2}\; \left\|\,\Lambda_{j}\, P_{\,W_{j}}\,(\,f\,) \,\right\|^{\,2} \,=\, \|\,T_{\,\Lambda}^{\,\ast}\,f\,\|^{\,2}\]
gives \,$A \,K\,K^{\,\ast} \,\leq\, T_{\Lambda}\,T_{\Lambda}^{\,\ast}$\; and by Theorem (\ref{th1}), \,$\exists\; \;L \,\in\, \mathcal{B}\,\left(\,H \,,\, l^{\,2}\left(\,\left\{\,H_{j}\,\right\}_{ j \,\in\, J}\,\right)\,\right)$\, such that \,$K \,=\, T_{\Lambda}\,L$.\,Define \,$L\,(\,f\,) \,=\, \left\{\,f_{\,j}\,\right\}_{j \,\in\, J}$, for every \;$f \,\in\, H$.\;Then, for each \,$f \,\in\, H$, we have 
\[K \,(\,f\,) \,=\, T_{\Lambda}\,L\,(\,f\,) \,=\, T_{\,\Lambda}\,\left(\,\left\{\,f_{\,j}\,\right\}_{j \,\in\, J}\,\right) \,=\, \sum\limits_{\,j \,\in\, J}\, v_{j}\, P_{\,W_{j}}\, \Lambda_{j}^{\,\ast}\; f_{\,j}\, \;\text{and}\] 
\[\left\|\,\left\{\,f_{\,j}\,\right\}_{j \,\in\, J}\,\right\|_{l^{\,2}\left(\,\left\{\,H_{j}\,\right\}_{ j \,\in\, J}\,\right)} \,=\, \left\|\,L\,(\,f\,)\,\right\|_{l^{\,2}\left(\,\left\{\,H_{j}\,\right\}_{ j \,\in\, J}\,\right)} \,\leq\, C\; \|\,f\,\|,\]where \,$C \,=\, \|\,L\,\|$.\;Hence, \,$\Lambda$\; is a \,$g$-atomic subspace of \,$H$\, with respect to \,$K$.
\end{proof}

\begin{theorem}
Let \,$\Lambda \,=\, \left\{\,\left(\,W_{j},\, \Lambda_{j},\, v_{j}\,\right)\,\right\}_{j \,\in\, J}$\; be a g-fusion frame for \,$H$.\;Then \,$\Lambda$\; is a g-atomic subspace of \,$H$\; with respect to its g-fusion frame operator \,$S_{\Lambda}$.
\end{theorem}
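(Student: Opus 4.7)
By Theorem \ref{thm1}, it suffices to show that $\Lambda$ is an $S_\Lambda$-g-fusion frame for $H$, since g-atomic subspaces with respect to $K$ are exactly $K$-g-fusion frames. The upper bound is free: $\Lambda$ is in particular a g-fusion Bessel sequence with some bound $D$ (its upper g-fusion frame bound), so
\[
\sum_{j \in J} v_j^{\,2}\,\bigl\|\Lambda_j\,P_{W_j}(f)\bigr\|^{\,2} \,\leq\, D\,\|f\|^{\,2} \quad \forall\, f \in H.
\]

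For the lower $K$-g-fusion frame bound with $K = S_\Lambda$, my plan is to invoke Theorem \ref{th1.02}, which reduces the task to producing a constant $A > 0$ with $S_\Lambda \,\geq\, A\,K\,K^{\,\ast} \,=\, A\,S_\Lambda^{\,2}$ (using self-adjointness of $S_\Lambda$). Since $\Lambda$ has lower bound $C$ and upper bound $D$, we have $C\,I_H \,\leq\, S_\Lambda \,\leq\, D\,I_H$, and $S_\Lambda$ is positive and self-adjoint, so $S_\Lambda^{\,1/2}$ exists by the functional calculus. Applying the inequality $S_\Lambda \,\leq\, D\,I_H$ to the vector $S_\Lambda^{\,1/2} f$ and using that $S_\Lambda^{\,1/2}$ commutes with $S_\Lambda$, I get
\[
\bigl\langle S_\Lambda^{\,2}\,f,\, f \bigr\rangle \,=\, \bigl\langle S_\Lambda\,S_\Lambda^{\,1/2}\,f,\, S_\Lambda^{\,1/2}\,f \bigr\rangle \,\leq\, D\,\bigl\|S_\Lambda^{\,1/2}\,f\bigr\|^{\,2} \,=\, D\,\bigl\langle S_\Lambda\,f,\, f\bigr\rangle,
\]
i.e.\ $S_\Lambda^{\,2} \,\leq\, D\,S_\Lambda$, which rearranges to $S_\Lambda \,\geq\, \frac{1}{D}\,S_\Lambda\,S_\Lambda^{\,\ast}$. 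Thus Theorem \ref{th1.02} applies with $A = 1/D$, and $\Lambda$ is an $S_\Lambda$-g-fusion frame.

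As a direct cross-check one can also verify the defining inequality of a $K$-g-fusion frame by hand: $\bigl\|S_\Lambda^{\,\ast}\,f\bigr\|^{\,2} \,=\, \bigl\langle S_\Lambda^{\,2}\,f,\, f\bigr\rangle \,\leq\, D\,\bigl\langle S_\Lambda\,f,\, f\bigr\rangle \,=\, D\,\sum_{j \in J} v_j^{\,2}\,\bigl\|\Lambda_j\,P_{W_j}(f)\bigr\|^{\,2}$, giving the lower bound $1/D$ directly without appealing to Theorem \ref{th1.02}.

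The only mildly delicate step is the operator inequality $S_\Lambda^{\,2} \,\leq\, D\,S_\Lambda$, since squaring or multiplying operator inequalities is not automatic in a non-commutative setting; the point is that $S_\Lambda$ commutes with itself (and with $S_\Lambda^{\,1/2}$), so the functional calculus argument above is valid. Once that inequality is in hand, the remainder of the proof is a one-line invocation of the equivalence in Theorem \ref{thm1} to conclude that $\Lambda$ is a g-atomic subspace of $H$ with respect to $S_\Lambda$.
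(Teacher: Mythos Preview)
Your proof is correct and follows the same overall architecture as the paper: both reduce, via Theorem~\ref{thm1}, to showing that $\Lambda$ is an $S_\Lambda$-$g$-fusion frame, and both establish the lower inequality by proving an operator bound of the form $\alpha\,S_\Lambda S_\Lambda^{\,\ast} \leq S_\Lambda$. The difference lies in how that operator inequality is obtained. The paper observes that $\mathcal{R}(T_\Lambda) = H = \mathcal{R}(S_\Lambda)$ and invokes Douglas' factorization theorem (Theorem~\ref{th1}) to produce some unspecified $\alpha > 0$ with $\alpha\,S_\Lambda S_\Lambda^{\,\ast} \leq T_\Lambda T_\Lambda^{\,\ast} = S_\Lambda$, then reads off $\alpha\,\|S_\Lambda^{\,\ast} f\|^2 \leq \|T_\Lambda^{\,\ast} f\|^2 = \sum_j v_j^2\,\|\Lambda_j P_{W_j} f\|^2$. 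You instead exploit the explicit upper bound $S_\Lambda \leq D\,I_H$ together with the functional calculus for the positive operator $S_\Lambda$ to derive $S_\Lambda^{\,2} \leq D\,S_\Lambda$ directly, yielding the explicit constant $A = 1/D$. Your route is more elementary in that it avoids Douglas' theorem entirely and produces a concrete lower bound; the paper's route is shorter to state but non-constructive in the constant. Either way the conclusion is the same, and your care about non-commutativity when justifying $S_\Lambda^{\,2} \leq D\,S_\Lambda$ is appropriate.
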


\begin{proof}
Since \,$\Lambda$\; is a \,$g$-fusion frame in \,$H$, there exist \,$A,\, B \,>\, 0$\; such that
\[A\, \left\|\, f \, \right\|^{\,2} \,\leq\, \sum\limits_{\,j \,\in\, J}\, v_{j}^{\,2}\; \left\|\,\Lambda_{j}\, P_{\,W_{j}}\,(\,f\,) \,\right\|^{\,2} \,\leq\, B\, \left\|\, f \, \right\|^{\,2}\; \;\forall\; f \,\in\, H.\]Since \,$\mathcal{R}\,\left(\,T_{\Lambda}\,\right) \,=\, H \,=\, \mathcal{R}\,\left(\,S_{\Lambda}\,\right)$, by Theorem (\ref{th1}), there exists some \,$\alpha \,>\, 0$\; such that \,$\alpha \,S_{\Lambda}\, S_{\Lambda}^{\,\ast} \,\leq\, T_{\Lambda}\,T_{\Lambda}^{\,\ast}$\; and therefore for each \,$f \,\in\, H$, we have
\[\alpha\,\left \|\,S_{\Lambda}^{\,\ast}\,f \,\right \|^{\,2} \;\leq\; \|\,T_{\,\Lambda}^{\,\ast}\,f\,\|^{\,2} \,=\, \sum\limits_{\,j \,\in\, J}\, v_{j}^{\,2}\; \left\|\,\Lambda_{j}\, P_{\,W_{j}}\,(\,f\,) \,\right\|^{\,2} \,\leq\, B \; \left\|\, f \, \right\|^{\,2}.\] Thus, \,$\Lambda$\; is a \,$S_{\Lambda}$-$g$-fusion frame and hence by Theorem (\ref{thm1}), \,$\Lambda$\, is a \,$g$-atomic subspace of \,$H$\; with respect to \,$S_{\Lambda}$.
\end{proof}

\begin{theorem}\label{thm2}
Let \,$\Lambda \,=\, \left\{\,\left(\,W_{j},\, \Lambda_{j},\, v_{j}\,\right)\,\right\}_{j \,\in\, J}$\; and \,$\Gamma \,=\, \left\{\,\left(\,W_{j},\, \Gamma_{j},\, v_{j}\,\right)\,\right\}_{j \,\in\, J}$\; be two g-atomic subspaces of \,$H$\; with respect to \,$K \,\in\, \mathcal{B}\,(\,H\,)$\; with the corresponding synthesis operators \,$T_{\Lambda}$\; and \,$T_{\Gamma}$, respectively.\;If \;$T_{\Lambda}\, T_{\Gamma}^{\,\ast} \,=\, \theta_{H}\; (\,\theta_{H}$\, is a null operator on \,$H\,)$\, and \,$U,\, V \,\in\, \mathcal{B}\,(\,H\,)$\; such that \,$U \,+\, V$\; is invertible operator on \,$H$\, with \,$K\, \left(\,U \,+\, V \,\right) \,=\, \left(\,U \,+\, V \,\right)\, K$, then
\[\left\{\,\left(\,(\,U \,+\, V\,)\,W_{j},\; \left(\,\Lambda_{j} \,+\, \Gamma_{j}\,\right)\,P_{\,W_{j}}\,\left(\,U \,+\, V \,\right)^{\,\ast},\; v_{j}\,\right)\,\right\}_{j \,\in\, J}\] is a g-atomic subspace of \,$H$\; with respect to \,$K$.
\end{theorem}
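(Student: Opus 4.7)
The plan is to invoke Theorem \ref{thm1} and reduce the problem to showing that the new family
\[\Omega \,=\, \left\{\,\left(\,(U+V)W_{j},\; \Omega_{j},\; v_{j}\,\right)\,\right\}_{j \,\in\, J}, \qquad \Omega_{j} \,=\, (\Lambda_{j} + \Gamma_{j})\,P_{W_{j}}(U+V)^{\ast},\]
is a $K$-$g$-fusion frame for $H$. The first simplification is to observe that $(U+V)W_{j}$ is closed because $U+V$ is invertible, so Theorem \ref{th1.01} applied with subspace $W_{j}$ and operator $U+V$ gives $P_{W_{j}}(U+V)^{\ast} \,=\, P_{W_{j}}(U+V)^{\ast}\,P_{(U+V)W_{j}}$. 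Hence for every $f \in H$,
\[\Omega_{j}\,P_{(U+V)W_{j}}(f) \,=\, (\Lambda_{j} + \Gamma_{j})\,P_{W_{j}}(U+V)^{\ast}\,f,\]
which eliminates the outer projection and lets me work directly with $(U+V)^{\ast} f$.

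For the upper bound I would apply the elementary inequality $\|a+b\|^{2} \le 2\|a\|^{2} + 2\|b\|^{2}$ term by term and use that $\Lambda$ and $\Gamma$ are $g$-fusion Bessel sequences (with bounds $B_{\Lambda}, B_{\Gamma}$), yielding
\[\sum_{j \in J} v_{j}^{\,2}\,\|\Omega_{j}\,P_{(U+V)W_{j}}(f)\|^{\,2} \,\le\, 2\,(B_{\Lambda}+B_{\Gamma})\,\|U+V\|^{\,2}\,\|f\|^{\,2}.\]

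The lower bound is the heart of the argument. Expanding the squared norm gives a diagonal part plus a cross term
\[2\,\mathrm{Re}\sum_{j \in J} v_{j}^{\,2}\,\langle\,\Lambda_{j}P_{W_{j}}(U+V)^{\ast}f,\; \Gamma_{j}P_{W_{j}}(U+V)^{\ast}f\,\rangle \,=\, 2\,\mathrm{Re}\,\langle\,T_{\Gamma}T_{\Lambda}^{\ast}(U+V)^{\ast}f,\,(U+V)^{\ast}f\,\rangle.\]
Since $T_{\Lambda}T_{\Gamma}^{\ast} = \theta_{H}$ implies $T_{\Gamma}T_{\Lambda}^{\ast} = (T_{\Lambda}T_{\Gamma}^{\ast})^{\ast} = \theta_{H}$, this cross term vanishes. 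Applying the $K$-$g$-fusion lower bounds of $\Lambda$ and $\Gamma$ separately to the vector $(U+V)^{\ast}f$ then gives
\[\sum_{j \in J} v_{j}^{\,2}\,\|\Omega_{j}\,P_{(U+V)W_{j}}(f)\|^{\,2} \,\ge\, (A_{\Lambda}+A_{\Gamma})\,\|K^{\ast}(U+V)^{\ast}f\|^{\,2}.\]
Using $K(U+V)=(U+V)K$ we rewrite $K^{\ast}(U+V)^{\ast} = (U+V)^{\ast}K^{\ast}$, and invertibility of $(U+V)^{\ast}$ lets me estimate $\|(U+V)^{\ast}K^{\ast}f\| \ge \|(U+V)^{-1}\|^{-1}\|K^{\ast}f\|$, producing the required $K$-$g$-fusion lower bound with constant $(A_{\Lambda}+A_{\Gamma})/\|(U+V)^{-1}\|^{2}$.

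The main obstacle I anticipate is bookkeeping: correctly stripping off $P_{(U+V)W_{j}}$ via Theorem \ref{th1.01} (which needs the closedness of $(U+V)W_{j}$ and otherwise does not require $U+V$ to be unitary), and recognizing that the commutation $K(U+V)=(U+V)K$ combined with invertibility is precisely what is needed to pull $(U+V)^{\ast}$ past $K^{\ast}$ and recover $\|K^{\ast}f\|$. Once the cross term is killed by $T_{\Lambda}T_{\Gamma}^{\ast}=\theta_{H}$, the rest is a direct assembly, and the conclusion follows from Theorem \ref{thm1}.
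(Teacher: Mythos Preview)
Your proposal is correct and follows essentially the same route as the paper: reduce via Theorem~\ref{thm1} to showing a $K$-$g$-fusion frame, strip the outer projection with Theorem~\ref{th1.01}, kill the cross term using $T_{\Lambda}T_{\Gamma}^{\ast}=\theta_H$, and combine the commutation $K(U+V)=(U+V)K$ with invertibility to recover $\|K^{\ast}f\|$. The only cosmetic differences are in the constants---the paper also uses the vanishing cross term for the upper bound (getting $B_{\Lambda}+B_{\Gamma}$ rather than your $2(B_{\Lambda}+B_{\Gamma})$), while for the lower bound it keeps only the $\Lambda$-term (yielding $A_{\Lambda}$ instead of your sharper $A_{\Lambda}+A_{\Gamma}$).
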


\begin{proof}
Since \,$\Lambda$\; and \,$\Gamma$\; are \,$g$-atomic subspaces with respect to \,$K$, by Theorem (\ref{thm1}), they are \,$K$-$g$-fusion frames for \,$H$.\;So, for each \,$f \,\in\, H$, there exist positive constants \,$(\,A_{\,1},\, B_{\,1}\,)\, \;\text{and}\; \,(\,A_{\,2},\, B_{\,2}\,)$\; such that 
\[A_{\,1}\,\left \|\,K^{\,\ast}\,f \,\right \|^{\,2} \;\leq\; \sum\limits_{\,j \,\in\, J}\,v_{j}^{\,2}\, \left\|\,\Lambda_{j}\,P_{\,W_{j}}\,(\,f\,) \,\right\|^{\,2} \,\leq\, B_{\,1} \; \left\|\, f \, \right\|^{\,2}\, \;\text{and}\] 
\[A_{\,2}\,\left \|\,K^{\,\ast}\,f \,\right \|^{\,2} \;\leq\; \sum\limits_{\,j \,\in\, J}\,v_{j}^{\,2}\, \left\|\,\Gamma_{j}\,P_{\,W_{j}}\,(\,f\,) \,\right\|^{\,2} \,\leq\, B_{\,2} \; \left\|\, f \, \right\|^{\,2}.\]
Since \,$T_{\Lambda}\, T_{\Gamma}^{\,\ast} \,=\, \theta_{H}$, for any \,$f \,\in\, H$, we have
\begin{equation}\label{eqn3}
T_{\Lambda}\,\left\{\,v_{j}\,\Gamma_{j}\,P_{\,W_{j}}\,(\,f\,)\,\right\}_{j \,\in\, J} \,=\, \sum\limits_{\,j \,\in\, J}\,v_{j}^{\,2}\,P_{\,W_{j}}\,\Lambda_{j}^{\,\ast}\;\Gamma_{j}\,P_{\,W_{j}}\,(\,f\,)\,=\, 0\,.
\end{equation}
Also, \,$U \,+\, V$\; is invertible, so
\[\left\|\,K^{\,\ast}\,f\,\right\|^{\,2} \,=\, \left\|\,\left(\,\left(\,U \,+\, V\,\right)^{\,-\, 1}\,\right)^{\,\ast}\,(\,U \,+\, V\,)^{\,\ast}\,K^{\,\ast}\,f\,\right\|^{\,2}\]
\begin{equation}\label{eqn4}
\hspace{2cm}\leq\, \left\|\,\left(\,U \,+\, V\right)^{\,-\, 1}\,\right\|^{\,2}\, \left\|\,(\,U \,+\, V\,)^{\,\ast}\,K^{\,\ast}\,f\,\right\|^{\,2}.
\end{equation} 
Now, for any \,$f \,\in\, H$, we have  
\[\sum\limits_{\,j \,\in\, J}\,v_{j}^{\,2}\,\left\|\,\left(\,\Lambda_{j} \,+\, \Gamma_{j}\,\right)\,P_{\,W_{j}}\,\left(\,U \,+\, V \,\right)^{\,\ast}\, P_{\,(\,U \,+\, V\,)\,W_{j}}\,(\,f\,) \,\right\|^{\,2}\]
\[\,=\, \sum\limits_{\,j \,\in\, J}\,v_{j}^{\,2}\,\left\|\,\left(\,\Lambda_{j} \,+\, \Gamma_{j}\,\right)\,P_{\,W_{j}}\,\left(\,U \,+\, V\,\right)^{\,\ast}\,(\,f\,)\,\right\|^{\,2}\,[\;\text{using Theorem (\ref{th1.01})}\;]\hspace{2.7cm}\]
\[\,=\, \sum\limits_{\,j \,\in\, J}\,v_{j}^{\,2}\, \left<\,\left(\,\Lambda_{j} \,+\, \Gamma_{j}\,\right)\,P_{\,W_{j}}\,(\,T^{\,\ast}\,f\,) \;,\; \left(\,\Lambda_{j} \,+\, \Gamma_{j}\,\right)\,P_{\,W_{j}}\,(\,T^{\,\ast}\,f\,)\,\right>\; [\;\text{taking}\; \,T =\, U +\, V\,\;]\]
\[=\, \sum\limits_{\,j \,\in\, J}\,v_{j}^{\,2}\, \left(\, \left\|\,\Lambda_{j}\,P_{\,W_{j}}\,(\,T^{\,\ast}\,f\,)\,\right\|^{\,2} \,+\, \left\|\,\Gamma_{j}\,P_{\,W_{j}}\,(\,T^{\,\ast}\,f\,)\,\right\|^{\,2} \,+\, 2\,\text{Re}\, \left<\,T\,P_{\,W_{j}}\, \Lambda_{j}^{\,\ast}\, \Gamma_{j}\, P_{\,W_{j}}\,(\,T^{\,\ast}\,f\,) \,,\, f\,\right>\,\right)\]
\[=\; \sum\limits_{\,j \,\in\, J}\,v_{j}^{\,2}\, \left\|\,\Lambda_{j}\,P_{\,W_{j}}\,\left(\,T^{\,\ast}\,f\,\right)\,\right\|^{\,2} \,+\, \sum\limits_{\,j \,\in\, J}\,v_{j}^{\,2}\, \left\|\,\Gamma_{j}\,P_{\,W_{j}}\,\left(\,T^{\,\ast}\,f\,\right)\,\right\|^{\,2}\; \;[\;\text{using (\ref{eqn3})}\;]\hspace{2.3cm}\]
\[\leq\; B_{\,1}\; \left\|\,T^{\,\ast}\,f\,\right\|^{\,2} \,+\,  B_{\,2}\; \left\|\,T^{\,\ast}\,f\,\right\|^{\,2}\; \;[\;\text{since}\; \Lambda,\, \Gamma \;\text{are $K$-$g$-fusion frames}\;]\hspace{2.3cm}\]
\[=\, \left(\,B_{\,1} \,+\, B_{\,2}\,\right)\,\left\|\,\left(\,U \,+\, V\,\right)^{\,\ast}\,f\,\right\|^{\,2}\; \;[\;\text{since}\; \,T \,=\, U \,+\, V\;]\hspace{4.5cm}\] 
\[\leq\; \left(\,B_{\,1} \,+\, B_{\,2}\,\right)\, \left\|\,U \,+\, V\,\right\|^{\,2}\, \|\,f\,\|^{\,2}\; \;[\;\text{as}\; \,U \,+\, V \;\text{is bounded}\;].\hspace{3.6cm}\]
On the other hand, 
\[\sum\limits_{\,j \,\in\, J}\,v_{j}^{\,2}\,\left\|\,\left(\,\Lambda_{j} \,+\, \Gamma_{j}\,\right)\,P_{\,W_{j}}\,\left(\,U \,+\, V \,\right)^{\,\ast}\, P_{\,(\,U \,+\, V\,)\,W_{j}}\,(\,f\,) \,\right\|^{\,2}\hspace{2cm}\]
\[\hspace{2.3cm}=\; \sum\limits_{\,j \,\in\, J}\,v_{j}^{\,2}\, \left\|\,\Lambda_{j}\,P_{\,W_{j}}\,(\,U \,+\, V\,)^{\,\ast}\,f\,\right\|^{\,2} \,+\, \sum\limits_{\,j \,\in\, J}\,v_{j}^{\,2}\, \left\|\,\Gamma_{j}\,P_{\,W_{j}}\,(\,U \,+\, V\,)^{\,\ast}\,f\,\right\|^{\,2}\]
\[\geq\; \sum\limits_{\,j \,\in\, J}\,v_{j}^{\,2}\, \left\|\,\Lambda_{j}\,P_{\,W_{j}}\,(\,U \,+\, V\,)^{\,\ast}\,f\,\right\|^{\,2}\hspace{3.312cm}\]
\[\hspace{.7cm}\geq\; A_{\,1}\; \left\|\,K^{\,\ast}\,\left(\,U \,+\, V\,\right)^{\,\ast}\,f\,\right\|^{\,2}\;  \;[\;\text{since}\; \,\Lambda \;\text{is $K$-$g$-fusion frame}\;]\]
\[\hspace{1.9cm}=\; A_{\,1}\; \left\|\,\left(\,U \,+\, V\,\right)^{\,\ast}\,K^{\,\ast}\,f\,\right\|^{\,2}\;  \;[\;\text{using}\; \,K\, \left(\,U \,+\, V \,\right) \,=\, \left(\,U \,+\, V \,\right)\, K\;]\]
\[\hspace{2cm}\geq\; A_{\,1}\; \left\|\,\left(\,U \,+\, V\right)^{\,-\, 1}\,\right\|^{\,-\, 2}\; \left\|\,K^{\,\ast}\,f\,\right\|^{\,2}\;  \;[\;\text{using (\ref{eqn4})}\;].\hspace{3cm}\]
Therefore, \,$\left\{\,\left(\,(\,U \,+\, V)\,W_{j},\; \left(\,\Lambda_{j} \,+\, \Gamma_{j}\,\right)\,P_{\,W_{j}}\,\left(\,U \,+\, V \,\right)^{\,\ast},\; v_{j}\,\right)\,\right\}_{j \,\in\, J}$\; is a \,$K$-$g$-fusion frame and by Theorem (\ref{thm1}), it is a \,$g$-atomic subspace of \,$H$\, with respect to \,$K$.
\end{proof}

\begin{corollary}
Let \,$\Lambda \,=\, \left\{\,\left(\,W_{j},\, \Lambda_{j},\, v_{j}\,\right)\,\right\}_{j \,\in\, J}$\, and \;$\Gamma \,=\, \left\{\,\left(\,W_{j},\, \Gamma_{j},\, v_{j}\,\right)\,\right\}_{j \,\in\, J}$\; be two g-atomic subspaces of \,$H$\; with respect to \,$K \,\in\, \mathcal{B}\,(\,H\,)$\; with the corresponding synthesis operators \,$T_{\Lambda}$\; and \,$T_{\Gamma}$.\;If \,$T_{\Lambda}\, T_{\Gamma}^{\,\ast} \,=\, \theta_{H}$\; and \,$U \,\in\, \mathcal{B}\,(\,H\,)$\; is an invertible operator with \,$K\,U \,=\, U\,K$\,, then \;$\left\{\,\left(\,U\,W_{j},\; \left(\,\Lambda_{j} \,+\, \Gamma_{j}\,\right)\,P_{\,W_{j}}\,U^{\,\ast},\; v_{j}\,\right)\,\right\}_{j \,\in\, J}$\; is a g-atomic subspace of \,$H$\; with respect to \,$K$. 
\end{corollary}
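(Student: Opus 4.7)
The plan is to obtain this as an immediate specialization of Theorem \ref{thm2} by taking $V$ to be the zero operator on $H$. First I would set $V \,=\, \theta_{H}$, so that \,$U \,+\, V \,=\, U$, which is invertible in \,$\mathcal{B}(H)$\, by hypothesis. Next I would verify the commutation condition required by Theorem \ref{thm2}: since \,$K\,U \,=\, U\,K$\, and \,$V$\, is the zero operator (which trivially commutes with \,$K$\,), we have
\[K\,(\,U \,+\, V\,) \,=\, K\,U \,=\, U\,K \,=\, (\,U \,+\, V\,)\,K.\]
The other hypotheses ($T_{\Lambda}\,T_{\Gamma}^{\,\ast} \,=\, \theta_{H}$\, and that both $\Lambda,\, \Gamma$\, are g-atomic subspaces with respect to $K$) are identical to those of the corollary, so nothing further needs to be checked.

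With these verifications in hand, Theorem \ref{thm2} applied to the pair \,$(U,\, V)\,=\,(U,\, \theta_{H})$\, yields directly that
\[\left\{\,\left(\,(\,U \,+\, V\,)\,W_{j},\; \left(\,\Lambda_{j} \,+\, \Gamma_{j}\,\right)\,P_{\,W_{j}}\,\left(\,U \,+\, V\,\right)^{\,\ast},\; v_{j}\,\right)\,\right\}_{j \,\in\, J}\]
is a g-atomic subspace of \,$H$\, with respect to \,$K$. Substituting \,$U \,+\, V \,=\, U$\, gives exactly the family in the statement of the corollary.

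There is no real obstacle here; the only thing that needed confirmation was that the zero operator qualifies as an admissible choice of \,$V$\, in Theorem \ref{thm2}, and the hypotheses of that theorem only require that \,$U \,+\, V$\, be invertible and that it commute with \,$K$, both of which follow trivially when \,$V \,=\, \theta_{H}$\, and \,$K\,U \,=\, U\,K$. Hence the corollary is established.
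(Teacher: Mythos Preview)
Your proposal is correct and matches the paper's own proof, which simply states that the corollary follows directly from Theorem~\ref{thm2} by putting \,$V \,=\, \theta_{H}$. You have just made the verification of the hypotheses more explicit than the paper does.
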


\begin{proof}
The proof of this Corollary directly follows from the Theorem (\ref{thm2}), by putting \,$V \,=\, \theta_{H}$.
\end{proof}

\begin{theorem}
Let \,$\Lambda \,=\, \left\{\,\left(\,W_{j},\, \Lambda_{j},\, v_{j}\,\right)\,\right\}_{j \,\in\, J}$\, be a g-atomic subspace for \,$K \,\in\, \mathcal{B}\,(\,H\,)$\, and \,$S_{\Lambda}$\, be the frame operator of \,$\Lambda$.\;If \,$U \,\in\, \mathcal{B}\,(\,H\,)$\, be a positive and invertible operator on \,$H$, then \,$\Lambda^{\,\prime} \,=\, \left\{\,\left(\,(\,I_{H} \,+\, U\,)\,W_{j},\; \Lambda_{j}\,P_{\,W_{j}} \left(\,I_{H} \,+\, U \,\right)^{\,\ast},\; v_{j}\,\right)\,\right\}_{j \,\in\, J}$\, is a g-atomic subspace of \,$H$\, with respect to \,$K$.\;Moreover, for any natural number \,$n$, \,$\Lambda^{\,\prime\,\prime} \,=\, \left\{\,\left(\,(\,I_{H} \,+\, U^{\,n}\,)\,W_{j},\; \Lambda_{j}\,P_{\,W_{j}} \left(\,I_{H} \,+\, U^{\,n} \,\right)^{\,\ast},\; v_{j}\,\right)\,\right\}_{j \,\in\, J}$\; is a g-atomic subspace of \,$H$\; with respect to \,$K$.  
\end{theorem}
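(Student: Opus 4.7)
The plan is to verify the two conditions of Definition \ref{def1} directly for $\Lambda'$, and then adapt the same argument for $\Lambda''$. First I will establish the necessary properties of $I_H + U$: since $U$ is positive, $I_H + U$ is positive and self-adjoint (so $(I_H + U)^{\,\ast} = I_H + U$), and $I_H + U \geq I_H$, so $I_H + U$ is invertible with $\|(I_H + U)^{-1}\| \leq 1$. Being a bounded bijection of $H$, each $(I_H + U) W_j$ is closed (its preimage under the continuous $(I_H+U)^{-1}$ being $W_j$). Applying Theorem \ref{th1.01} with $T = I_H + U$ and $V = W_j$ gives the simplification
\[
\Lambda_j P_{W_j}(I_H + U)^{\,\ast}\, P_{(I_H + U) W_j} \;=\; \Lambda_j P_{W_j}(I_H + U),
\]
and, by taking adjoints, $P_{(I_H + U) W_j}(I_H + U) P_{W_j} = (I_H + U) P_{W_j}$.

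For condition (I), that $\Lambda'$ is a $g$-fusion Bessel sequence, the above simplification combined with the Bessel bound $B$ of $\Lambda$ yields
\[
\sum_{j \in J} v_j^{\,2}\,\|\Lambda_j P_{W_j}(I_H + U) f\|^{\,2} \;\leq\; B\,\|(I_H+U) f\|^{\,2} \;\leq\; B\,\|I_H + U\|^{\,2}\,\|f\|^{\,2}.
\]
For condition (II), I will use the factorization $T_{\Lambda'} = (I_H + U)\,T_{\Lambda}$ (coming directly from the adjoint identity above). Given $f \in H$, I need $\{g_j\} \in l^{\,2}(\{H_j\}_{j \in J})$ with $T_{\Lambda}(\{g_j\}) = (I_H + U)^{-1} K f$ and $\|\{g_j\}\| \leq C'\,\|f\|$. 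Assuming the natural commutation $K(I_H + U) = (I_H + U) K$ (parallel to the commutation hypothesis used in Theorem \ref{thm2}), I can rewrite $(I_H + U)^{-1} K f = K(I_H + U)^{-1} f$, and then apply the $g$-atomic property of $\Lambda$ to the vector $(I_H+U)^{-1} f$ to obtain $\{g_j\}$ with $T_{\Lambda}(\{g_j\}) = K(I_H + U)^{-1} f$ and $\|\{g_j\}\| \leq C\,\|(I_H + U)^{-1} f\| \leq C\,\|f\|$, using $\|(I_H+U)^{-1}\| \leq 1$. This realizes $K f = (I_H+U)\,T_{\Lambda}(\{g_j\}) = T_{\Lambda'}(\{g_j\})$, verifying condition (II).

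For the moreover part, the Borel functional calculus shows that for positive invertible $U$ the power $U^{\,n}$ is again positive and invertible (the spectrum of $U$ sits in $(0,\infty)$ and $t \mapsto t^{\,n}$ is strictly positive there), so $I_H + U^{\,n}$ enjoys all the properties used above. Commutation with $K$ propagates by induction from $U$ to $U^{\,n}$, so the preceding argument applies verbatim with $I_H + U^{\,n}$ in place of $I_H + U$. The main obstacle is precisely this commutation step: without some commutation hypothesis between $K$ and $U$, the vector $(I_H + U)^{-1} K f$ need not lie in $\mathcal{R}(K) \subseteq \mathcal{R}(T_{\Lambda})$, and there is no direct way to produce the required sequence $\{g_j\}$ from the atomic property of $\Lambda$; the proof really rests on being able to move $(I_H + U)^{-1}$ across $K$.
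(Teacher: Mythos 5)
Your verification of condition (I) of Definition \ref{def1} (the Bessel bound for $\Lambda^{\,\prime}$, via Theorem \ref{th1.01} and the bound $B\,\|I_H+U\|^{\,2}$) matches the paper. The problem is condition (II): you prove it only under the additional hypothesis $K\,(I_H+U) \,=\, (I_H+U)\,K$, which is \emph{not} assumed in this theorem (it is assumed in Theorem \ref{thm2}, but not here, where $U$ is only required to be positive and invertible). You say so yourself in your closing remarks. As written, then, the proposal establishes a weaker statement than the one claimed, and the step you identify as the ``main obstacle'' --- moving $(I_H+U)^{\,-1}$ across $K$ so that $(I_H+U)^{\,-1}Kf$ lands in $\mathcal{R}(T_\Lambda)$ --- is a genuine gap, not a removable technicality of your particular route.

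The paper takes a different route that avoids the explicit decomposition entirely: by Theorem \ref{thm1} being a $g$-atomic subspace for $K$ is equivalent to being a $K$-$g$-fusion frame, and by Theorem \ref{th1.02} the latter is equivalent to $S_{\Lambda} \,\geq\, A\,K\,K^{\,\ast}$ for some $A>0$. The paper computes the frame operator of $\Lambda^{\,\prime}$ to be $(I_H+U)\,S_{\Lambda}\,(I_H+U)^{\,\ast}$ and then asserts $(I_H+U)\,S_{\Lambda}\,(I_H+U)^{\,\ast} \,\geq\, S_{\Lambda} \,\geq\, A\,K\,K^{\,\ast}$, which requires no commutation with $K$ at all. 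You should be aware, however, that the inequality $(I_H+U)\,S_{\Lambda}\,(I_H+U)^{\,\ast} \,\geq\, S_{\Lambda}$ invoked there is itself delicate: for positive $S$ and $T \,=\, I_H+U \,\geq\, I_H$ one has $\left<\,T S T f, f\,\right> \,=\, \|S^{1/2}Tf\|^{\,2}$, which need not dominate $\|S^{1/2}f\|^{\,2}$ unless $U$ and $S_{\Lambda}$ commute or some further structure is present. So the difficulty you located is real and does not simply disappear in the paper's argument; but the intended repair is to argue at the level of the operator inequality $S_{\Lambda^{\,\prime}} \,\geq\, A^{\,\prime}\,K\,K^{\,\ast}$ (equivalently, via Douglas' theorem, $\mathcal{R}(K) \,\subseteq\, \mathcal{R}\bigl(\,(I_H+U)\,S_{\Lambda}^{1/2}\,\bigr)$), not to impose commutation of $K$ with $U$. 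Your proof of the ``moreover'' part inherits the same defect, since it reuses the commutation assumption for $U^{\,n}$.
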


\begin{proof}
Since \,$\Lambda$\; is a \,$g$-atomic subspace with respect to \,$K$, by Theorem (\ref{thm1}), it is a \,$K$-$g$-fusion frame for \,$H$.\;Then, according to the Theorem (\ref{th1.02}), there exists \,$A \,>\, 0$\; such that \,$S_{\Lambda} \,\geq\, A\,K\,K^{\,\ast}$.\;Now, for each \,$f \,\in\, H$, we have
\[\sum\limits_{\,j \,\in\, J}\,v_{j}^{\,2}\,\left\|\,\Lambda_{j}\,P_{\,W_{j}} \left(\,I_{H} \,+\, U \,\right)^{\,\ast}\, P_{\,\left(\,I_{H} \,+\, U\,\right)\,W_{j}}\,(\,f\,)\,\right\|^{\,2}\]
\[\hspace{.7cm}\,=\, \sum\limits_{\,j \,\in\, J}\,v_{j}^{\,2}\,\left\|\,\Lambda_{j}\,P_{\,W_{j}} \left(\,I_{H} \,+\, U \,\right)^{\,\ast}\, (\,f\,)\,\right\|^{\,2}\; \;[\;\text{using Theorem (\ref{th1.01})}\;]\]
\[\leq\, B\,\left\|\,\left(\,I_{H} \,+\, U \,\right)^{\,\ast}\, (\,f\,)\,\right\|^{\,2}\; \;[\;\text{since $\Lambda$ is a $K$-$g$-fusion frame}\;]\]
\[\leq\, B\, \left\|\,I_{H} \,+\, U\,\right\|^{\,2}\,\|\,f\,\|^{\,2}\; \;[\;\text{since \,$\left(\,I_{H} \,+\, U\,\right) \,\in\, \mathcal{B}\,(\,H\,)$}\;].\hspace{.7cm}\]
Thus \,$\Lambda^{\,\prime}$\, is a \,$g$-fusion Bessel sequence in \,$H$.\;Also, for each \,$f \,\in\, H$, we have 
\[\sum\limits_{\,j \,\in\, J}\,v_{j}^{\,2}\; P_{\,\left(\,I_{H} \,+\, U\,\right)\,W_{j}}\, \left(\,\Lambda_{j}\,P_{\,W_{j}} \left(\,I_{H} \,+\, U \,\right)^{\,\ast}\,\right)^{\,\ast}\, \Lambda_{j}\,P_{\,W_{j}} \left(\,I_{H} \,+\, U \,\right)^{\,\ast}\, P_{\,\left(\,I_{H} \,+\, U\,\right)\,W_{j}}\,(\,f\,)\]
\[=\, \sum\limits_{\,j \,\in\, J}\,v_{j}^{\,2}\, P_{\,\left(\,I_{H} \,+\, U\,\right)\,W_{j}} \left(\,I_{H} \,+\, U \,\right)\,P_{\,W_{j}}\, \Lambda^{\,\ast}_{j}\,\Lambda_{j}\,P_{\,W_{j}} \left(\,I_{H} \,+\, U \,\right)^{\,\ast}\, P_{\,\left(\,I_{H} \,+\, U\,\right)\,W_{j}}\,(\,f\,)\]
\[=\, \sum\limits_{\,j \,\in\, J}\,v_{j}^{\,2}\,\left(\,P_{\,W_{j}}\left(\,I_{H} \,+\, U \,\right)^{\,\ast}\, P_{\,\left(\,I_{H} \,+\, U\,\right)\,W_{j}}\,\right)^{\,\ast}\, \Lambda^{\,\ast}_{j}\,\Lambda_{j}\,\left(\,P_{\,W_{j}} \left(\,I_{H} \,+\, U \,\right)^{\,\ast}\, P_{\,\left(\,I_{H} \,+\, U\,\right)\,W_{j}}\,(\,f\,)\,\right)\]
\[=\, \sum\limits_{\,j \,\in\, J}\,v_{j}^{\,2}\, \left(\,P_{\,W_{j}}\,\left(\,I_{H} \,+\, U\,\right)^{\,\ast}\,\right)^{\,\ast}\, \Lambda^{\,\ast}_{j}\,\Lambda_{j}\, P_{\,W_{j}} \left(\,I_{H} \,+\, U\,\right)^{\,\ast}\,(\,f\,)\; \;[\;\text{using Theorem (\ref{th1.01})}\;]\]
\[=\, \sum\limits_{\,j \,\in\, J}\,v_{j}^{\,2} \left(\,I_{H} \,+\, U\,\right)\, P_{\,W_{j}}\, \Lambda^{\,\ast}_{j}\,\Lambda_{j}\, P_{\,W_{j}}\left(\,I_{H} \,+\, U\,\right)^{\,\ast}\,(\,f\,) \hspace{5cm}\]
\[ =\, \left(\,I_{H} \,+\, U\,\right)\, \sum\limits_{\,j \,\in\, J}\,v_{j}^{\,2}\,P_{\,W_{j}}\, \Lambda^{\,\ast}_{j}\,\Lambda_{j}\, P_{\,W_{j}} \left(\,I_{H} \,+\, U\,\right)^{\,\ast}\,(\,f\,) \,=\, \left(\,I_{H} \,+\, U\,\right)\,S_{\Lambda}\, \left(\,I_{H} \,+\, U\,\right)^{\,\ast}\,(\,f\,).\]This shows that the frame operator of \,$\Lambda^{\,\prime}$\; is \,$\left(\,I_{H} \,+\, U\,\right)\,S_{\Lambda}\, \left(\,I_{H} \,+\, U\,\right)^{\,\ast}$.\;Now, 
\[ \left(\,I_{H} \,+\, U\,\right)\,S_{\Lambda}\, \left(\,I_{H} \,+\, U\,\right)^{\,\ast} \,\geq\, S_{\Lambda}\, \geq\, A\,K\,K^{\,\ast}\; \;[\;\text{since}\; \,U,\, S_{\Lambda}\; \;\text{are positive}\;].\] Then by Theorem (\ref{th1.02}), we can conclude that \,$\Lambda^{\,\prime}$\; is a \,$K$-$g$-fusion frame and therefore by Theorem (\ref{thm1}), \,$\Lambda^{\,\prime}$\; is a \,$g$-atomic subspace of \,$H$\, with respect to \,$K$.
According to the preceding procedure, for any natural number \,$n$, the frame operator of \,$\Lambda^{\,\prime\,\prime}$\; is \,$\left(\,I_{H} \,+\, U^{\,n}\,\right)\,S_{\Lambda}\, \left(\,I_{H} \,+\, U^{\,n}\,\right)^{\,\ast}$\; and similarly it can be shown that \,$\Lambda^{\,\prime\,\prime}$\; is a g-atomic subspace of \,$H$\; with respect to \,$K$.        
\end{proof}

\section{Frame operator for a pair of $g$-fusion Bessel sequences}

\smallskip\hspace{.6 cm} In this section, we shall discussed about the frame operator for a pair of \,$g$-fusion Bessel sequences and established some properties relative to frame operator.\;At the end of this section, we shall construct a new \,$g$-fusion frame for the Hilbert space \,$H \,\oplus\, X$, using the \,$g$-fusion frames of the Hilbert spaces \,$H$\, and \,$X$.

\begin{definition}
Let \,$\Lambda \,=\, \left\{\,\left(\,W_{j},\, \Lambda_{j},\, w_{j}\,\right)\,\right\}_{j \,\in\, J}$\, and \,$\Gamma \,=\, \left\{\,\left(\,V_{j},\, \Gamma_{j},\, v_{j}\,\right)\,\right\}_{j \,\in\, J}$\, be two g-fusion Bessel sequences in \,$H$\; with bounds \,$D_{\,1}$\, and \,$D_{\,2}$.\;Then the operator \,$S_{\Gamma\,\Lambda} \,:\, H \,\to\, H$, defined by
\[S_{\Gamma\,\Lambda}\,(\,f\,) \,=\, \sum\limits_{\,j \,\in\, J}\,v_{j}\, w_{j}\, P_{\,V_{j}}\, \Gamma_{j}^{\,\ast}\; \Lambda_{j}\, P_{\,W_{j}}\,(\,f\,)\;\; \;\forall\; f \,\in\, H,\] is called the frame operator for the pair of g-fusion Bessel sequences \,$\Lambda$\, and \,$\Gamma$.
\end{definition}

\begin{theorem}
The frame operator \,$S_{\Gamma\,\Lambda}$\; for the pair of g-fusion Bessel sequences \,$\Lambda$\; and \,$\Gamma$\; is bounded and \,$S_{\,\Gamma\,\Lambda}^{\,\ast} \,=\, S_{\Lambda\,\Gamma}$.  
\end{theorem}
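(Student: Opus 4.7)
The plan is to recognize the frame operator $S_{\Gamma\,\Lambda}$ as a composition of the synthesis operator of $\Gamma$ with the analysis operator of $\Lambda$, and then to read off both boundedness and the adjoint relation from this factorization.

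First, I would recall from the preliminaries that for a g-fusion Bessel sequence with bound $D$, the synthesis operator $T$ is bounded with $\|T\| \leq \sqrt{D}$ (so $\|T^{\,\ast}\| \leq \sqrt{D}$ as well). Explicitly, $T_{\Lambda}^{\,\ast}(f) \,=\, \{\,w_{j}\,\Lambda_{j}\,P_{\,W_{j}}\,(\,f\,)\,\}_{j \,\in\, J}$ lies in $l^{\,2}\!\left(\,\{\,H_{j}\,\}_{j \,\in\, J}\,\right)$ for every $f \,\in\, H$, and $T_{\Gamma}\,\left(\,\{\,g_{j}\,\}_{j \,\in\, J}\,\right) \,=\, \sum_{j \,\in\, J}\, v_{j}\, P_{\,V_{j}}\, \Gamma_{j}^{\,\ast}\, g_{j}$.

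Next, I would compute the composition $T_{\Gamma}\, T_{\Lambda}^{\,\ast}$ on an arbitrary $f \,\in\, H$: substituting the formula for $T_{\Lambda}^{\,\ast}(f)$ into $T_{\Gamma}$ yields
\[ T_{\Gamma}\, T_{\Lambda}^{\,\ast}\,(\,f\,) \,=\, T_{\Gamma}\,\left(\,\{\,w_{j}\,\Lambda_{j}\,P_{\,W_{j}}\,(\,f\,)\,\}_{j \,\in\, J}\,\right) \,=\, \sum_{j \,\in\, J}\, v_{j}\, w_{j}\, P_{\,V_{j}}\, \Gamma_{j}^{\,\ast}\, \Lambda_{j}\, P_{\,W_{j}}\,(\,f\,) \,=\, S_{\Gamma\,\Lambda}\,(\,f\,). \]
This factorization $S_{\Gamma\,\Lambda} \,=\, T_{\Gamma}\, T_{\Lambda}^{\,\ast}$ is the key observation. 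From it, boundedness follows at once: $\|\,S_{\Gamma\,\Lambda}\,\| \,\leq\, \|\,T_{\Gamma}\,\|\,\|\,T_{\Lambda}^{\,\ast}\,\| \,\leq\, \sqrt{D_{\,1}\,D_{\,2}}$. In particular the defining series for $S_{\Gamma\,\Lambda}$ converges unconditionally because $T_{\Gamma}$ is a bounded linear operator applied to a convergent element of $l^{\,2}\!\left(\,\{\,H_{j}\,\}_{j \,\in\, J}\,\right)$.

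Finally, taking adjoints in the factorization gives $S_{\Gamma\,\Lambda}^{\,\ast} \,=\, (\,T_{\Gamma}\,T_{\Lambda}^{\,\ast}\,)^{\,\ast} \,=\, T_{\Lambda}\,T_{\Gamma}^{\,\ast}$, and by the same computation with the roles of $\Lambda$ and $\Gamma$ swapped, this equals $S_{\Lambda\,\Gamma}$. I do not expect any serious obstacle here; the only mildly delicate point is justifying that the series defining $S_{\Gamma\,\Lambda}(f)$ actually converges in $H$ (not merely weakly), but this is immediate from the continuity of $T_{\Gamma}$ on $l^{\,2}\!\left(\,\{\,H_{j}\,\}_{j \,\in\, J}\,\right)$ combined with convergence of $T_{\Lambda}^{\,\ast}(f)$ in that space.
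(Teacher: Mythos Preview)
Your argument is correct. The factorization $S_{\Gamma\Lambda} = T_{\Gamma}\,T_{\Lambda}^{\ast}$ is valid exactly as you wrote it, and both boundedness and the adjoint identity drop out immediately.

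The paper takes a different, more hands-on route: it fixes $f,g\in H$, expands $\langle S_{\Gamma\Lambda}f,g\rangle$ as the series $\sum_{j} v_{j}w_{j}\langle \Lambda_{j}P_{W_{j}}f,\Gamma_{j}P_{V_{j}}g\rangle$, and applies Cauchy--Schwarz in $\ell^{2}$ together with the two Bessel bounds to get $|\langle S_{\Gamma\Lambda}f,g\rangle|\le\sqrt{D_{1}D_{2}}\,\|f\|\,\|g\|$; the adjoint relation is then checked by rewriting the same inner-product series as $\langle f,S_{\Lambda\Gamma}g\rangle$. Your operator-theoretic factorization is cleaner and, as you note, makes norm convergence of the defining series automatic. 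On the other hand, the paper's computation produces as by-products the pointwise estimates $\|S_{\Gamma\Lambda}f\|\le\sqrt{D_{2}}\,\|T_{\Lambda}^{\ast}f\|$ and $\|S_{\Gamma\Lambda}^{\ast}g\|\le\sqrt{D_{1}}\,\|T_{\Gamma}^{\ast}g\|$, which it labels and reuses in the subsequent theorems; of course these also follow instantly from your factorization via $\|T_{\Gamma}T_{\Lambda}^{\ast}f\|\le\|T_{\Gamma}\|\,\|T_{\Lambda}^{\ast}f\|$.
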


\begin{proof}
For each \,$f,\; g \,\in\, H$, we have
\[\left<\,S_{\Gamma\,\Lambda}\,(\,f\,) \,,\, g\,\right> \,=\, \left<\,\sum\limits_{\,j \,\in\, J}\, v_{j}\, w_{j}\, P_{\,V_{j}}\, \Gamma_{j}^{\,\ast}\; \Lambda_{j}\, P_{\,W_{j}}\,(\,f\,) \;,\; g\,\right>\]
\begin{equation}\label{eq3}
\hspace{2.8cm} =\; \sum\limits_{\,j \,\in\, J}\, v_{j}\, w_{j}\,\left<\,\Lambda_{j}\, P_{\,W_{j}}\,(\,f\,) \;,\; \Gamma_{j}\,P_{\,V_{j}}\,(\,g\,)\,\right>.
\end{equation} 
By the Cauchy-Schwarz inequality, we obtain \,$\left|\,\left<\,S_{\Gamma\,\Lambda}\,(\,f\,) \,,\, g\,\right>\,\right|$
\begin{equation}\label{eq4}
\hspace{1cm} \,\leq\, \left(\,\sum\limits_{\,j \,\in\, J}\, v_{\,j}^{\,2}\; \left\|\,\Gamma_{j}\, P_{\,V_{j}}\,(\,g\,) \,\right\|^{\,2}\,\right)^{\,\dfrac{1}{2}}\; \left(\,\sum\limits_{\,j \,\in\, J}\, w_{\,j}^{\,2}\;  \left\|\,\Lambda_{j}\, P_{\,W_{j}}\,(\,f\,) \,\right\|^{\,2}\,\right)^{\,\dfrac{1}{2}}
\end{equation}
\[\leq\; \sqrt{D_{\,2}}\; \;\|\,g\,\|\; \sqrt{D_{\,1}}\; \;\|\,f\,\|.\hspace{7.4cm}\] This shows that \,$S_{\Gamma\,\Lambda}$\; is a bounded operator with \,$\|\,S_{\Gamma\,\Lambda}\,\| \;\leq\; \sqrt{D_{\,1}\, D_{\,2}}$.\;Now, 
\[\left\|\,S_{\Gamma\,\Lambda}\,f\,\right\| \;=\; \sup\left\{\,\left|\,\left<\,S_{\Gamma\,\Lambda}\,(\,f\,) \,,\, g\,\right>\,\right| \;:\; \|\,g\,\| \;=\; 1\,\right\}\]
\[\leq\, \sup\left\{\,\sqrt{D_{\,2}}\; \;\|\,g\,\|\; \left(\,\sum\limits_{\,j \,\in\, J}\, w_{\,j}^{\,2}\; \left\|\,\Lambda_{j}\,P_{\,W_{j}}\,(\,f\,) \,\right\|^{\,2}\,\right)^{\,\dfrac{1}{2}} \;:\; \|\,g\,\| \,=\, 1\,\right\}\; \;[ \;\text{using}\; (\ref{eq4})\;]\]
\begin{equation}\label{eq5}
\;\leq\; \sqrt{D_{\,2}}\; \left(\,\sum\limits_{\,j \,\in\, J}\, w_{\,j}^{\,2}\; \left\|\,\Lambda_{j}\,P_{\,W_{j}}\,(\,f\,) \,\right\|^{\,2}\,\right)^{\,\dfrac{1}{2}}  
\end{equation}
and similarly it can be shown that
\begin{equation}\label{eq6}
\left\|\,S_{\Gamma\,\Lambda}^{\,\ast}\,g\,\right\| \;\leq\; \sqrt{D_{\,1}}\; \left(\,\sum\limits_{\,j \,\in\, J}\, v_{\,j}^{\,2}\; \left\|\,\Gamma_{j}\,P_{\,V_{j}}\,(\,g\,) \,\right\|^{\,2}\,\right)^{\,\dfrac{1}{2}}. 
\end{equation}
Also, for each \,$f,\, g \,\in\, H$, we have
\[\left<\,S_{\Gamma\,\Lambda}\,(\,f\,) \;,\; g\,\right> \,=\, \left<\,\sum\limits_{\,j \,\in\, J}\,v_{j}\, w_{j}\, P_{\,V_{j}}\, \Gamma_{j}^{\,\ast}\; \Lambda_{j}\, P_{\,W_{j}}\,(\,f\,) \;,\; g\,\right>\hspace{2.5cm}\]
\[=\, \sum\limits_{\,j \,\in\, J}\, v_{\,j}\, w_{\,j}\, \left<\,f \;,\;  P_{\,W_{j}}\, \Lambda_{j}^{\,\ast}\; \Gamma_{j}\, P_{\,V_{j}}\,(\,g\,) \,\right>\]
\[\hspace{3.2cm}=\; \left<\,f \,,\, \sum\limits_{\,j \,\in\, J}\,w_{\,j}\,v_{\,j}\,P_{\,W_{j}}\,\Lambda_{j}^{\,\ast}\,\Gamma_{j}\,P_{\,V_{j}}\,(\,g\,)\,\right> \,=\, \left<\,f \;,\; S_{\Lambda\,\Gamma}\,(\,g\,)\,\right>\]and hence \,$S_{\Gamma\,\Lambda}^{\,\ast} \,=\, S_{\Lambda\,\Gamma}$. 
\end{proof}

\begin{theorem}
Let \,$S_{\Gamma\,\Lambda}$\; be the frame operator for a pair of g-fusion Bessel sequences \,$\Lambda$\, and \,$\Gamma$\, with bounds \,$D_{\,1}$\, and \,$D_{\,2}$, respectively.\;Then the following statements are equivalent:
\begin{itemize}
\item[(I)]\; $S_{\Gamma\,\Lambda}$\; is bounded below.
\item[(II)] There exists \,$K \,\in\, \mathcal{B}\,(\,H\,)$\; such that \;$\left\{\,T_{j}\,\right\}_{j \,\in\, J}$\; is a resolution of the identity operator on \,$H$, where \;$T_{j} \,=\, v_{j}\, w_{j}\, K\, P_{\,V_{j}}\, \Gamma_{j}^{\,\ast}\; \Lambda_{j}\, P_{\,W_{j}} \;,\; j \,\in\, J$.  
\end{itemize} 
If one of the given conditions hold, then \,$\Lambda$\; is a g-fusion frame.
\end{theorem}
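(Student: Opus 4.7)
The plan is to prove the two implications separately and then the final ``if'' clause as a consequence.

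\textbf{Step 1: $(II) \Rightarrow (I)$.} This direction is the short one. Unfolding the definition, for each $f \in H$,
\[
\sum_{j\in J} T_j(f) \;=\; K\left(\sum_{j\in J} v_j\, w_j\, P_{V_j}\Gamma_j^{\,\ast}\Lambda_j P_{W_j}(f)\right) \;=\; K\, S_{\Gamma\,\Lambda}(f),
\]
so the resolution-of-identity hypothesis says $K\,S_{\Gamma\,\Lambda} = I_H$. From $\|f\|=\|K\,S_{\Gamma\,\Lambda}(f)\|\le \|K\|\,\|S_{\Gamma\,\Lambda}(f)\|$, I immediately read off $\|S_{\Gamma\,\Lambda}(f)\|\ge \|K\|^{-1}\|f\|$, i.e.\ $S_{\Gamma\,\Lambda}$ is bounded below.

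\textbf{Step 2: $(I) \Rightarrow (II)$.} Assume $\|S_{\Gamma\,\Lambda}(f)\|\ge c\,\|f\|$ for some $c>0$. Standard operator theory then gives that $S_{\Gamma\,\Lambda}$ is injective and has closed range $\mathcal{R}(S_{\Gamma\,\Lambda})\subseteq H$. Define a linear map $K_{0}\colon \mathcal{R}(S_{\Gamma\,\Lambda})\to H$ by $K_{0}\,S_{\Gamma\,\Lambda}(f)=f$; the bounded-below estimate shows $\|K_{0}\|\le c^{-1}$. Extend $K_{0}$ by zero on $\mathcal{R}(S_{\Gamma\,\Lambda})^{\perp}$ to obtain $K\in\mathcal{B}(H)$ with $K\,S_{\Gamma\,\Lambda}=I_{H}$. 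Writing this last identity out term-by-term,
\[
f \;=\; K\,S_{\Gamma\,\Lambda}(f) \;=\; \sum_{j\in J} v_j\, w_j\, K\, P_{V_j}\Gamma_j^{\,\ast}\Lambda_j P_{W_j}(f) \;=\; \sum_{j\in J} T_j(f),
\]
where unconditional convergence is inherited from that of $S_{\Gamma\,\Lambda}(f)$ because $K$ is bounded. Thus $\{T_{j}\}_{j\in J}$ is a resolution of the identity, proving (II).

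\textbf{Step 3: The frame conclusion.} Either condition forces $S_{\Gamma\,\Lambda}$ to be bounded below by some $c>0$. Plugging into inequality~(\ref{eq5}) already in the paper, for every $f\in H$,
\[
c\,\|f\| \;\le\; \|S_{\Gamma\,\Lambda}(f)\| \;\le\; \sqrt{D_{\,2}}\,\left(\sum_{j\in J} w_{j}^{\,2}\,\|\Lambda_{j}P_{W_{j}}(f)\|^{\,2}\right)^{\!1/2},
\]
which rearranges to $\dfrac{c^{\,2}}{D_{\,2}}\,\|f\|^{\,2}\le \sum_{j\in J} w_{j}^{\,2}\,\|\Lambda_{j}P_{W_{j}}(f)\|^{\,2}$. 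Combined with the upper bound $D_{\,1}\|f\|^{\,2}$ coming from $\Lambda$ being a g-fusion Bessel sequence, this exhibits $\Lambda$ as a g-fusion frame with bounds $c^{2}/D_{2}$ and $D_{1}$.

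\textbf{Expected obstacle.} The only nontrivial point is the construction of $K$ in Step 2: one must be careful to note that ``bounded below'' yields a closed-range injection but not necessarily a surjection, so $K$ is produced by inverting on the range and extending by zero on its orthogonal complement. Everything else is a direct unfolding of definitions combined with the already-established bound~(\ref{eq5}).
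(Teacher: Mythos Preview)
Your proof is correct and follows essentially the same route as the paper: both directions hinge on the equivalence between $S_{\Gamma\Lambda}$ being bounded below and admitting a bounded left inverse $K$, after which the frame conclusion comes from inequality~(\ref{eq5}). The only methodological difference is that the paper invokes Douglas' factorization theorem (Theorem~\ref{th1}) in both directions---writing $I_H^{\ast}I_H \le A\,S_{\Gamma\Lambda}^{\ast}S_{\Gamma\Lambda}$ to extract $K$ with $K S_{\Gamma\Lambda}=I_H$, and conversely reading Douglas backwards from $I_H=K S_{\Gamma\Lambda}$---whereas you construct the left inverse explicitly (invert on the closed range, extend by zero on its orthogonal complement) and obtain boundedness below directly from $\|f\|\le\|K\|\,\|S_{\Gamma\Lambda}f\|$. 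Your argument is thus more self-contained and arguably cleaner; the paper's is shorter once Douglas is in hand. One further cosmetic difference: the paper proves the frame conclusion twice, once from each hypothesis, which is redundant once the equivalence is established; your single derivation from~(I) suffices.
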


\begin{proof}
$(\,I\,) \,\Rightarrow\, (\,II\,)$\; Suppose that \,$S_{\,\Gamma\,\Lambda}$\, is bounded below.\;Then for each \,$f \,\in\, H$, there exists \,$A \,>\, 0$\; such that  
\[\|\,f\,\|^{\,2} \,\leq\, A\;\left\|\,S_{\,\Gamma\,\Lambda}\,f\,\right\|^{\,2} \,\Rightarrow\, \left<\,I_{H}\,f \,,\, f\,\right> \,\leq\, A\, \left<\,S_{\,\Gamma\,\Lambda}^{\,\ast}\, S_{\,\Gamma\,\Lambda}\,f \,,\, f\,\right> \,\Rightarrow\,I^{\,\ast}_{H}\,I_{\,H} \,\leq\, A\, S_{\,\Gamma\,\Lambda}^{\,\ast}\, S_{\,\Gamma\,\Lambda}.\]So, by Theorem (\ref{th1}), there exists \,$K \,\in\, \mathcal{B}\,(\,H\,)$\, such that \,$K\, S_{\,\Gamma\,\Lambda} \,=\, I_{\,H}$.\;Therefore, for each \,$f \,\in\, H$, we have 
\[ f \,=\, K\, S_{\,\Gamma\,\Lambda}\,(\,f\,) \,=\, K\, \left(\,\sum\limits_{\,j \,\in\, J}\, v_{j}\, w_{j}\, P_{\,V_{j}}\, \Gamma_{j}^{\,\ast}\; \Lambda_{j}\, P_{\,W_{j}}\,(\,f\,)\,\right)\]
\[\,=\, \sum\limits_{\,j \,\in\, J}\, v_{j}\, w_{j}\, K\, P_{\,V_{j}}\, \Gamma_{j}^{\,\ast}\, \Lambda_{j}\, P_{\,W_{j}}\,(\,f\,) \,=\, \sum\limits_{\,j \,\in\, J}\, T_{j}\,(\,f\,)\hspace{.6cm}\] and hence \,$\left\{\,T_{\,j}\,\right\}_{j \,\in\, J}$\; is a resolution of the identity operator on \,$H$, where \,$T_{j} \,=\, v_{j}\, w_{j}\, K\, P_{\,V_{j}}\, \Gamma_{j}^{\,\ast}\, \Lambda_{j}\, P_{\,W_{j}}$.\\\\
$(\,II\,) \,\Rightarrow\, (\,I\,)$\; Since \;$\left\{\,T_{j}\,\right\}_{j \,\in\, J}$\; is a resolution of the identity operator on \,$H$, for any \,$f \,\in\, H$, we have
\[f \,=\, \sum\limits_{\,j \,\in\, J}\, T_{\,j}\,(\,f\,) \,=\, \sum\limits_{\,j \,\in\, J}\, v_{j}\, w_{j}\, K\, P_{\,V_{j}} \,\Gamma_{j}^{\,\ast}\; \Lambda_{j}\, P_{\,W_{j}}\,(\,f\,)\]
\[\hspace{1.3cm} \,=\, K\, \left(\,\sum\limits_{\,j \,\in\, J}\, v_{j}\, w_{j}\, P_{\,V_{j}}\, \Gamma_{j}^{\,\ast}\; \Lambda_{j}\, P_{\,W_{j}}\,(\,f\,)\,\right) \,=\, K\, S_{\,\Gamma\,\Lambda}\,(\,f\,).\]Thus, \,$I_{H} \,=\, K\,S_{\,\Gamma\,\Lambda}$.\;So, by Theorem (\ref{th1}), there exists some \,$\alpha \,>\, 0$\; such that \,$I_{H}\, I_{H}^{\,\ast} \,\leq\; \alpha\; S_{\,\Gamma\,\Lambda}\, S_{\,\Gamma\,\Lambda}^{\,\ast}$\; and hence \;$S_{\,\Gamma\,\Lambda}$\; is bounded below.\\

Last part:
First we suppose that \,$S_{\,\Gamma\,\Lambda}$\; is bounded below.\;Then for all \\$f \,\in\, H$, there exists \,$M \,>\, 0$\; such that \,$\left\|\,S_{\,\Gamma\,\Lambda}\,f\,\right\| \,\geq\, M\; \|\,f\,\|$\, and this implies that 
\[M^{\,2}\; \|\,f\,\|^{\,2} \;\leq\; \left\|\,S_{\,\Gamma\,\Lambda}\,f\,\right\|^{\,2} \;\leq\; D_{\,2}\; \sum\limits_{\,j \,\in\, J}\,w_{j}^{\,2}\, \left\|\,\Lambda_{j}\, P_{\,W_{j}}\,(\,f\,) \,\right\|^{\,2}\; \;[\;\text{using (\ref{eq5})}\;]\]
\[\Rightarrow\; \dfrac{M^{\,2}}{D_{\,2}}\; \|\,f\,\|^{\,2} \;\leq\; \sum\limits_{\,j \,\in\, J}\, w_{j}^{\,2}\; \left\|\,\Lambda_{j}\, P_{\,W_{j}}\,(\,f\,) \,\right\|^{\,2}.\hspace{1cm}\] Hence, \,$\Lambda$\; is a \,$g$-fusion frame for \,$H$\; with bounds \,$ \dfrac{M^{\,2}}{D_{\,2}}$\, and \,$D_{\,1}$.\\Next, we suppose that the given condition $(II)$ holds.\;Then for any \,$f \,\in\, H$, we have
\[f \,=\, \sum\limits_{\,j \,\in\, J}\, v_{j}\, w_{j}\, K\, P_{\,V_{j}} \,\Gamma_{j}^{\,\ast}\; \Lambda_{j}\, P_{\,W_{j}}\,(\,f\,),\; K \,\in\, \mathcal{B}\,(\,H\,).\]By Cauchy-Schwarz inequality, for each \,$f  \,\in\, H$, we have
\[\|\,f\,\|^{\,2} \,=\, \left<\,f \,,\, f\,\right> \,=\, \left<\,\sum\limits_{\,j \,\in\, J}\, v_{j}\, w_{j}\, K\, P_{\,V_{j}} \,\Gamma_{j}^{\,\ast}\; \Lambda_{j}\, P_{\,W_{j}}\,(\,f\,) \;,\; f\,\right>\] 
\[=\, \sum\limits_{\,j \,\in\, J}\, v_{j}\, w_{j}\,\left<\,\Lambda_{j}\, P_{\,W_{j}}\,(\,f\,) \;,\; \Gamma_{j}\,P_{\,V_{j}}\,(\,K^{\,\ast}\,f\,)\,\right>\hspace{.3cm}\]
\[\hspace{2.95cm}\;\leq\,  \left(\,\sum\limits_{\,j \,\in\, J}\, w_{\,j}^{\,2}\,  \left\|\,\Lambda_{j}\, P_{\,W_{j}}\,(\,f\,) \,\right\|^{\,2}\,\right)^{\,\dfrac{1}{2}}\, \left(\,\sum\limits_{\,j \,\in\, J}\, v_{\,j}^{\,2}\, \left\|\,\Gamma_{j}\, P_{\,V_{j}}\,(\,K^{\,\ast}\,f\,) \,\right\|^{\,2}\,\right)^{\,\dfrac{1}{2}}\]
\[\hspace{.4cm}\leq\, \sqrt{D_{\,2}}\, \left\|\,K^{\,\ast}\,f\right\|\, \left(\,\sum\limits_{\,j \,\in\, J}\, w_{\,j}^{\,2}\; \left\|\,\Lambda_{j}\, P_{\,W_{j}}\,(\,f\,) \,\right\|^{\,2}\,\right)^{\,\dfrac{1}{2}}\]  
\[\hspace{.7cm}\leq\, \sqrt{D_{\,2}}\, \|\,K\,\|\, \|\,f\,\|\, \left(\,\sum\limits_{\,j \,\in\, J}\, w_{\,j}^{\,2}\; \left\|\,\Lambda_{j}\, P_{\,W_{j}}\,(\,f\,) \,\right\|^{\,2}\,\right)^{\,\dfrac{1}{2}}\]
\[\hspace{.3cm}\Rightarrow\, \dfrac{1}{D_{\,2}\,\|\,K\,\|^{\,2}}\, \|\,f\,\|^{\,2} \,\leq\, \sum\limits_{\,j \,\in\, J}\, w_{\,j}^{\,2}\; \left\|\,\Lambda_{j}\, P_{\,W_{j}}\,(\,f\,) \,\right\|^{\,2}.\]Therefore, in this case \,$\Lambda$\; is also a \,$g$-fusion frame for \,$H$.       
\end{proof}

\begin{theorem}
Let \,$S_{\Gamma\,\Lambda}$\, be the frame operator for a pair of g-fusion Bessel sequences \,$\Lambda$\, and \,$\Gamma$\, with bounds \,$D_{\,1}$\, and \,$D_{\,2}$, respectively.\;Suppose \;$\lambda_{\,1} \,<\, 1,\; \lambda_{\,2} \,>\, \,-\,1$\; such that for each \,$f \,\in\, H$, 
\[\left\|\,f \,-\, S_{\Gamma\,\Lambda}\,f\,\right\| \,\leq\; \lambda_{\,1}\, \|\,f\,\| \,+\, \lambda_{\,2}\, \left\|\,S_{\Gamma\,\Lambda}\,f \,\right\|.\] Then \,$\Lambda$\, is a g-fusion frame for \;$H$.   
\end{theorem}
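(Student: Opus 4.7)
The plan is to show that the hypothesis forces $S_{\Gamma\Lambda}$ to be bounded below and then to convert a lower bound on $\|S_{\Gamma\Lambda}f\|$ into a lower g-fusion frame bound for $\Lambda$ by invoking inequality (\ref{eq5}) from the preceding theorem. Since $\Lambda$ is already a g-fusion Bessel sequence with bound $D_{\,1}$, the upper frame bound comes for free; only the lower bound requires work.

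First, I would apply the reverse triangle inequality to the given estimate. For every $f \in H$,
\[
\|f\| \,-\, \|S_{\Gamma\,\Lambda}\,f\| \,\leq\, \|f \,-\, S_{\Gamma\,\Lambda}\,f\| \,\leq\, \lambda_{\,1}\,\|f\| \,+\, \lambda_{\,2}\,\|S_{\Gamma\,\Lambda}\,f\|,
\]
which rearranges to $(1 \,-\, \lambda_{\,1})\,\|f\| \,\leq\, (1 \,+\, \lambda_{\,2})\,\|S_{\Gamma\,\Lambda}\,f\|$. Since $\lambda_{\,1} \,<\, 1$ and $\lambda_{\,2} \,>\, -\,1$, both $1 \,-\, \lambda_{\,1}$ and $1 \,+\, \lambda_{\,2}$ are strictly positive, so
\[
\|S_{\Gamma\,\Lambda}\,f\| \,\geq\, \frac{1 \,-\, \lambda_{\,1}}{1 \,+\, \lambda_{\,2}}\,\|f\| \qquad \forall\; f \,\in\, H.
\]
Thus $S_{\Gamma\,\Lambda}$ is bounded below.

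Next, I would combine this with inequality (\ref{eq5}), which gives
\[
\|S_{\Gamma\,\Lambda}\,f\| \,\leq\, \sqrt{D_{\,2}}\,\left(\,\sum\limits_{\,j \,\in\, J}\, w_{\,j}^{\,2}\,\left\|\,\Lambda_{j}\, P_{\,W_{j}}\,(\,f\,)\,\right\|^{\,2}\,\right)^{\!1/2}.
\]
Squaring the lower estimate on $\|S_{\Gamma\,\Lambda}\,f\|$ and inserting it on the left yields
\[
\frac{(1 \,-\, \lambda_{\,1})^{\,2}}{D_{\,2}\,(1 \,+\, \lambda_{\,2})^{\,2}}\,\|f\|^{\,2} \,\leq\, \sum\limits_{\,j \,\in\, J}\, w_{\,j}^{\,2}\,\left\|\,\Lambda_{j}\, P_{\,W_{j}}\,(\,f\,)\,\right\|^{\,2},
\]
which supplies the lower g-fusion frame bound. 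The upper bound $D_{\,1}$ is immediate from the hypothesis that $\Lambda$ is a g-fusion Bessel sequence. Hence $\Lambda$ is a g-fusion frame for $H$.

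There is no real obstacle here; the only subtlety is the bookkeeping around the signs of $1 \,-\, \lambda_{\,1}$ and $1 \,+\, \lambda_{\,2}$, which is precisely where the assumptions $\lambda_{\,1} \,<\, 1$ and $\lambda_{\,2} \,>\, -\,1$ are used. Once the reverse triangle inequality is applied correctly, the proof reduces to a direct substitution into (\ref{eq5}).
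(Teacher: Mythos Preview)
Your proof is correct and follows essentially the same route as the paper: apply the reverse triangle inequality to obtain $(1-\lambda_{1})\|f\|\leq (1+\lambda_{2})\|S_{\Gamma\Lambda}f\|$, then use inequality~(\ref{eq5}) to convert this into the lower g-fusion frame bound $\dfrac{1}{D_{2}}\left(\dfrac{1-\lambda_{1}}{1+\lambda_{2}}\right)^{2}$, with the upper bound $D_{1}$ supplied by the Bessel hypothesis. Your explicit remark on the positivity of $1-\lambda_{1}$ and $1+\lambda_{2}$ is a welcome clarification that the paper leaves implicit.
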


\begin{proof}
For each \,$f \,\in\, H$, we have
\[ \|\,f\,\| \,-\, \left\|\,S_{\,\Gamma\,\Lambda}\,f \,\right\|\, \leq\; \left\|\,f \,-\, S_{\,\Gamma\,\Lambda}\,f\,\right\| \,\leq\, \lambda_{\,1}\, \|\,f\,\| \,+\, \lambda_{\,2}\, \left\|\,S_{\,\Gamma\,\Lambda}\,f \,\right\|\]
\[\Rightarrow\; \left(\,1 \,-\, \lambda_{\,1}\,\right)\,\|\,f\,\| \;\leq\; \left(\,1 \,+\, \lambda_{\,2}\,\right)\;\|\,S_{\,\Gamma\,\Lambda}\,f \,\|\hspace{3.7cm}\] 
\[\hspace{1.5cm}\Rightarrow\, \left(\,\dfrac{1 \,-\, \lambda_{\,1}}{1 \,+\, \lambda_{\,2}}\,\right)\; \|\,f\,\| \;\leq\; \sqrt{D_{\,2}}\; \left(\,\sum\limits_{\,j \,\in\, J}\,w_{j}^{\,2}\, \left\|\,\Lambda_{j}\,P_{\,W_{j}}\,(\,f\,) \,\right\|^{\,2}\,\right)^{\,\dfrac{1}{2}}\; \;[\;\text{using (\ref{eq5})}\;]\]   
\begin{equation}\label{eq7}
\Rightarrow\; \dfrac{1}{D_{\,2}}\;\left(\,\dfrac{1 \,-\, \lambda_{\,1}}{1 \,+\, \lambda_{\,2}}\,\right)^{\,2}\; \|\,f\,\|^{\,2} \;\leq\; \sum\limits_{\,j \,\in\, J}\,w_{j}^{\,2}\, \left\|\,\Lambda_{j}\,P_{\,W_{j}}\,(\,f\,) \,\right\|^{\,2}.\hspace{1.1cm}
\end{equation}
Thus, \,$\Lambda$\; is a \,$g$-fusion frame for \,$H$\, with bounds \,$\dfrac{1}{D_{\,2}}\;\left(\,\dfrac{1 \,-\, \lambda_{\,1}}{1 \,+\, \lambda_{\,2}}\,\right)^{\,2}$\, and \,$D_{\,1}$.
\end{proof}

\begin{theorem}\label{th4}
Let \,$S_{\Gamma\,\Lambda}$\, be the frame operator for a pair of g-fusion Bessel sequences \,$\Lambda$\, and \,$\Gamma$\, of bounds \,$D_{\,1}$\, and \,$D_{\,2}$, respectively.\;Assume \;$\lambda \,\in\, [\,0 \,,\, 1\,)$\; such that 
\[\left\|\,f \,-\, S_{\Gamma\,\Lambda}\,f\,\right\| \;\leq\; \lambda\, \|\,f\,\|\;  \;\;\forall\; f \,\in\, H.\]
Then \,$\Lambda\; \;\text{and}\; \;\Gamma$\; are \,$g$-fusion frames for \,$H$. 
\end{theorem}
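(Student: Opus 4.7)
The plan is to imitate the previous theorem's argument for $\Lambda$, then obtain the analogous statement for $\Gamma$ by passing to adjoints. The hypothesis $\|f - S_{\Gamma\Lambda}f\| \leq \lambda\|f\|$ with $\lambda < 1$ is really just the special case $\lambda_1 = \lambda$, $\lambda_2 = 0$ of the preceding theorem, so for $\Lambda$ essentially nothing new is needed; the novelty is recovering frame bounds for $\Gamma$.

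First I would apply the reverse triangle inequality to obtain
\[(1-\lambda)\,\|f\| \;\leq\; \|S_{\Gamma\Lambda}f\|\quad\forall\,f\in H,\]
and then plug this into the upper estimate (\ref{eq5}), namely $\|S_{\Gamma\Lambda}f\| \leq \sqrt{D_2}\bigl(\sum_j w_j^2\|\Lambda_jP_{W_j}f\|^2\bigr)^{1/2}$, to conclude
\[\frac{(1-\lambda)^2}{D_2}\,\|f\|^2 \;\leq\; \sum_{j\in J} w_j^2\,\|\Lambda_jP_{W_j}f\|^2.\]
Combined with the Bessel bound $D_1$ for $\Lambda$, this shows $\Lambda$ is a $g$-fusion frame for $H$ with bounds $(1-\lambda)^2/D_2$ and $D_1$. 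This step is essentially a direct invocation of the previous theorem at $\lambda_1=\lambda,\lambda_2=0$.

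For $\Gamma$, the key observation is that $I_H - S_{\Gamma\Lambda}$ is self-paired with its adjoint $I_H - S_{\Gamma\Lambda}^{\,\ast} = I_H - S_{\Lambda\Gamma}$ in operator norm. More precisely, since
\[\|I_H - S_{\Gamma\Lambda}\| \;=\; \sup_{\|f\|=1}\|f-S_{\Gamma\Lambda}f\| \;\leq\; \lambda,\]
we also have $\|I_H - S_{\Lambda\Gamma}\| = \|(I_H-S_{\Gamma\Lambda})^{\,\ast}\| = \|I_H - S_{\Gamma\Lambda}\| \leq \lambda$, so that
\[\|g - S_{\Lambda\Gamma}g\| \;\leq\; \lambda\,\|g\|\quad\forall\,g\in H.\]
Then, exactly as above, $(1-\lambda)\|g\| \leq \|S_{\Lambda\Gamma}g\| = \|S_{\Gamma\Lambda}^{\,\ast}g\|$, and inequality (\ref{eq6}) gives
\[\frac{(1-\lambda)^2}{D_1}\,\|g\|^2 \;\leq\; \sum_{j\in J} v_j^2\,\|\Gamma_jP_{V_j}g\|^2,\]
so $\Gamma$ is a $g$-fusion frame with bounds $(1-\lambda)^2/D_1$ and $D_2$.

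The only subtle point is the passage to the adjoint in the second half: one must recognize that the hypothesis is equivalent to the operator-norm inequality $\|I_H - S_{\Gamma\Lambda}\|\leq\lambda$, which is self-adjoint invariant, and that $S_{\Gamma\Lambda}^{\,\ast} = S_{\Lambda\Gamma}$ from the earlier theorem allows us to swap the roles of $\Lambda$ and $\Gamma$ and then apply (\ref{eq6}) in place of (\ref{eq5}). Everything else is a routine repetition of the argument used for Theorem before it.
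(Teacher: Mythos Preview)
Your proof is correct and follows essentially the same route as the paper: for $\Lambda$ you specialize the preceding theorem to $\lambda_{1}=\lambda,\ \lambda_{2}=0$ and invoke (\ref{eq5}); for $\Gamma$ you pass to the adjoint via $\|I_{H}-S_{\Gamma\Lambda}^{\,\ast}\|=\|I_{H}-S_{\Gamma\Lambda}\|\leq\lambda$ and then apply (\ref{eq6}), obtaining the same bounds $(1-\lambda)^{2}/D_{2},\,D_{1}$ for $\Lambda$ and $(1-\lambda)^{2}/D_{1},\,D_{2}$ for $\Gamma$ as in the paper.
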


\begin{proof}
By putting \;$\lambda_{\,1} \,=\, \lambda\; \;\text{and}\; \; \lambda_{\,2} \,=\, 0$\; in (\ref{eq7}), we get 
\[ \dfrac{(\,1 \,-\, \lambda\,)^{\,2}}{D_{\,2}}\; \|\,f\,\|^{\,2} \;\leq\; \sum\limits_{\,j \,\in\, J}\,w_{\,j}^{\,2}\, \left\|\,\Lambda_{\,j}\,P_{\,W_{\,j}}\,(\,f\,) \,\right\|^{\,2}\]and therefore \,$\Lambda$\; is a \,$g$-fusion frame.\;Now, for each \,$f \,\in\, H$, we have 
\[\left\|\,f \,-\, S_{\,\Gamma\,\Lambda}^{\,\ast}\,f\,\right\| \,=\,\left\|\,\left(\,I_{\,H} \,-\, S_{\,\Gamma\,\Lambda}\,\right)^{\,\ast}\,f\,\right\| \,\leq\, \left\|\,\left(\,I_{\,H} \,-\, S_{\,\Gamma\,\Lambda}\,\right)\,\right\|\; \|\,f\,\| \,\leq\, \lambda\; \|\,f\,\|\]
\[\Rightarrow\, \left(\,1 \,-\, \lambda\,\right)\,\|\,f\,\| \,\leq\, \|\,S_{\,\Gamma\,\Lambda}^{\,\ast}\,f \,\| \,\leq\, \sqrt{D_{\,1}}\; \left(\,\sum\limits_{\,j \,\in\, J}\, v_{j}^{\,2}\; \left\|\,\Gamma_{j}\,P_{\,V_{j}}\,(\,f\,) \,\right\|^{\,2}\,\right)^{\,\dfrac{1}{2}}\; [\;\text{using (\ref{eq6})}\;]\]
\[\Rightarrow\; \sum\limits_{\,j \,\in\, J}\, v_{j}^{\,2}\; \left\|\,\Gamma_{j}\, P_{\,V_{j}}\,(\,f\,) \,\right\|^{\,2} \;\geq\; \dfrac{(\,1 \,-\, \lambda\,)^{\,2}}{D_{\,1}}\; \|\,f\,\|^{\,2}\; \;\;\forall\, f \,\in\, H.\] Hence, \,$\Gamma$\; is a \,$g$-fusion frame with bounds \,$\dfrac{(\,1 \,-\, \lambda\,)^{\,2}}{D_{\,1}}$\, and \,$D_{\,2}$.  
\end{proof}

\begin{definition}
Let \,$H$\; and \,$X$\; be two Hilbert spaces.\;Define
\[H\,\oplus\,X \,=\, \left\{\,(\,f \,,\, g\,) \;:\; f \,\in\, H,\; g \,\in\, X\,\right\}.\;\text{Then}\; \;H\,\oplus\,X\; \text{forms a}\]Hilbert space with respect to point-wise operations and inner product defined by  
\[\left<\,(\,f \,,\, g\,) \,,\, (\,f^{\,\prime} \,,\, g^{\,\prime}\,)\,\right> \,=\, \left<\,f \,,\, f^{\,\prime}\,\right>_{H} \,+\, \left<\,g \,,\, g^{\,\prime}\,\right>_{X}\; \;\forall\, f,\, f^{\,\prime} \,\in\, H\; \;\&\; \;\forall\, g,\, g^{\,\prime} \,\in\, X  .\]Now, if \,$U \,\in\, \mathcal{B}\,(\,H \,,\, Z\,),\, V \,\in\, \mathcal{B}\,(\,X\, \,,\, Y)$\, then for all \,$\;f \,\in\, H,\; g \,\in\, X$, we define 
\[U\,\oplus\,V \,\in\, \mathcal{B}\,\left(\,H\,\oplus\,X\, \,,\, Z\,\oplus\,Y \right)\; \;\text{by}\; \;\left(\,U\,\oplus\,V \,\right)\,(\,f \,,\, g\,) \,=\, \left(\,U\,f \,,\, V\,g\,\right),\]and 
$\left(\,U\,\oplus\,V\,\right)^{\,\ast} \,=\, U^{\,\ast}\,\oplus\,V^{\,\ast}$, where \,$Z,\, Y$\, are Hilbert spaces and also we define \,$P_{\,M\,\oplus\,\,N}\,(\,f \,,\, g\,) \,=\, \left(\,P_{\,M}\,f \,,\, P_{N}\,g \,\right)$, where \,$P_{\,M},\, P_{\,N}\; \;\text{and}\; \;P_{\,M\,\oplus\,\,N}$\; are the orthonormal projection onto the closed subspaces  \;$M \,\subset\, H,\, N \,\subset\, X \;\text{and}\; \;M\,\oplus\,\,N \,\subset\, H\,\oplus\,X$, respectively.  
\end{definition}

From here we assume that for each \,$j \,\in\, J,\, \,W_{j}\,\oplus\,V_{j}$\, are the closed subspaces of \,$H\,\oplus\,X$\, and \,$\Gamma_{j} \,\in\, \mathcal{B}\,(\,X \,,\, X_{j})$, where \,$\{\,X_{j}\,\}_{j \,\in\, J}$\, are the collection of Hilbert spaces and \,$\Lambda_{j}\,\oplus\,\Gamma_{j} \,\in\, \mathcal{B}\,\left(\,H\,\oplus\,X \,,\, H_{j} \,\oplus\, X_{j} \right)$.

\begin{theorem}
Let \,$\Lambda \,=\, \left\{\,\left(\,W_{j},\, \Lambda_{j},\, v_{j}\,\right)\,\right\}_{j \,\in\, J}$\; be a g-fusion frame for \,$H$\; with bounds \,$A,\, B$\; and \,$\Gamma \,=\, \left\{\,\left(\,V_{j},\, \Gamma_{j},\, v_{j}\,\right)\,\right\}_{j \,\in\, J}$\; be a g-fusion frame for \,$X$\; with bounds \,$C,\, D$.\;Then \,$\Lambda\,\oplus\,\Gamma \,=\, \left\{\,\left(\,W_{j}\,\oplus\,V_{j},\; \Lambda_{j}\,\oplus\,\Gamma_{j},\; v_{j}\,\right)\,\right\}_{j \,\in\, J}$\; is a g-fusion frame for \,$H\,\oplus\,X$\; with bounds \,$\min\,\{\,A,\, C\,\},\; \max\,\{\,B,\, D\,\}$.\;Furthermore, if \,$S_{\Lambda},\, S_{\Gamma}$\; and \,$S_{\Lambda\,\oplus\,\Gamma}$\; are g-fusion frame operators for \,$\Lambda,\, \Gamma$\; and \,$\Lambda\,\oplus\,\Gamma$\, respectively then we have \,$S_{\Lambda\,\oplus\,\Gamma} \,=\, S_{\Lambda}\,\oplus\,S_{\Gamma}$. 
\end{theorem}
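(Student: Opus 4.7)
The plan is to unwind the definitions of the direct sum operator and the direct sum projection, split the relevant series into the \,$H$\, and \,$X$\, pieces, and then apply the known frame bounds on each summand.

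First I would fix an arbitrary \,$(f,\,g) \,\in\, H\,\oplus\,X$\, and observe that, by the definitions given in the preceding paragraph,
\[P_{\,W_{j}\,\oplus\,V_{j}}\,(f,\,g) \,=\, \left(\,P_{\,W_{j}}\,f,\; P_{\,V_{j}}\,g\,\right)\]
and hence
\[(\Lambda_{j}\,\oplus\,\Gamma_{j})\,P_{\,W_{j}\,\oplus\,V_{j}}\,(f,\,g) \,=\, \left(\,\Lambda_{j}\,P_{\,W_{j}}\,f,\; \Gamma_{j}\,P_{\,V_{j}}\,g\,\right) \,\in\, H_{j}\,\oplus\,X_{j}.\]
Since the norm on \,$H_{j}\,\oplus\,X_{j}$\, is the \,$l^{\,2}$-direct sum norm, the squared norm of this element is \,$\|\Lambda_{j}P_{W_{j}}f\|^{\,2} \,+\, \|\Gamma_{j}P_{V_{j}}g\|^{\,2}$. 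Summing over \,$j$\, with weights \,$v_{j}^{\,2}$\, gives
\[\sum_{j \,\in\, J}\,v_{j}^{\,2}\,\left\|(\Lambda_{j}\,\oplus\,\Gamma_{j})\,P_{\,W_{j}\,\oplus\,V_{j}}\,(f,\,g)\right\|^{\,2} \,=\, \sum_{j \,\in\, J}\,v_{j}^{\,2}\,\|\Lambda_{j}P_{W_{j}}f\|^{\,2} \,+\, \sum_{j \,\in\, J}\,v_{j}^{\,2}\,\|\Gamma_{j}P_{V_{j}}g\|^{\,2}.\]

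The frame inequalities for \,$\Lambda$\, and \,$\Gamma$\, now give
\[A\,\|f\|^{\,2} \,+\, C\,\|g\|^{\,2} \,\leq\, \sum_{j \,\in\, J}\,v_{j}^{\,2}\,\left\|(\Lambda_{j}\,\oplus\,\Gamma_{j})\,P_{\,W_{j}\,\oplus\,V_{j}}\,(f,\,g)\right\|^{\,2} \,\leq\, B\,\|f\|^{\,2} \,+\, D\,\|g\|^{\,2}.\]
Bounding the left side below by \,$\min\{A,C\}\,(\|f\|^{\,2}+\|g\|^{\,2})$\, and the right side above by \,$\max\{B,D\}\,(\|f\|^{\,2}+\|g\|^{\,2})$, and recalling that \,$\|(f,g)\|^{\,2} \,=\, \|f\|^{\,2} \,+\, \|g\|^{\,2}$, yields precisely the claimed g-fusion frame bounds for \,$\Lambda\,\oplus\,\Gamma$.

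For the frame operator identity, I would use the fact that \,$(\Lambda_{j}\,\oplus\,\Gamma_{j})^{\,\ast} \,=\, \Lambda_{j}^{\,\ast}\,\oplus\,\Gamma_{j}^{\,\ast}$\, and the componentwise nature of \,$P_{W_{j}\oplus V_{j}}$\, to compute, for each \,$(f,g) \,\in\, H\,\oplus\,X$,
\[S_{\Lambda\,\oplus\,\Gamma}\,(f,\,g) \,=\, \sum_{j \,\in\, J}\,v_{j}^{\,2}\,\left(\,P_{W_{j}}\,\Lambda_{j}^{\,\ast}\,\Lambda_{j}\,P_{W_{j}}\,f,\; P_{V_{j}}\,\Gamma_{j}^{\,\ast}\,\Gamma_{j}\,P_{V_{j}}\,g\,\right) \,=\, \left(\,S_{\Lambda}\,f,\; S_{\Gamma}\,g\,\right),\]
which is exactly \,$(S_{\Lambda}\,\oplus\,S_{\Gamma})(f,\,g)$. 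There is no essential obstacle here; the only point requiring a little care is justifying the componentwise passage to the limit under the sum, but this is immediate because the two series above converge separately in \,$H$\, and \,$X$\, by the Bessel bounds for \,$\Lambda$\, and \,$\Gamma$, so the combined series converges in \,$H\,\oplus\,X$\, to the pair of limits.
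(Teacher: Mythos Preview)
Your proof is correct and follows essentially the same approach as the paper: you split the weighted sum via the componentwise action of $P_{W_j\oplus V_j}$ and $\Lambda_j\oplus\Gamma_j$, apply the individual frame bounds for $\Lambda$ and $\Gamma$, and combine them with $\min\{A,C\}$ and $\max\{B,D\}$; the frame operator identity is then obtained by the same componentwise unwinding. The paper's argument is identical in structure, merely writing out the intermediate inner-product steps more explicitly.
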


\begin{proof}
Let \,$(\,f \,,\, g\,) \,\in\, H\,\oplus\,X$\; be an arbitrary element.\;Then
\[\sum\limits_{\,j \,\in\, J}\,v_{j}^{\,2}\; \left\|\,\left(\,\Lambda_{j}\,\oplus\,\Gamma_{j}\,\right)\, P_{\,W_{j}\,\oplus\,V_{j}}\,(\,f \,,\, g\,)\,\right\|^{\,2}\]
\[\,=\, \sum\limits_{\,j \,\in\, J}\,v_{j}^{\,2}\; \left<\,\left(\,\Lambda_{j}\,\oplus\,\Gamma_{j}\,\right)\, P_{\,W_{j}\,\oplus\,V_{j}}\,(\,f \,,\, g\,)\, \;,\; \left(\,\Lambda_{j}\,\oplus\,\Gamma_{j}\,\right)\, P_{\,W_{j}\,\oplus\,V_{j}}\,(\,f \,,\, g\,) \,\right>\hspace{1.2cm}\]
\[=\; \sum\limits_{\,j \,\in\, J}\,v_{j}^{\,2}\;\left<\,\Lambda_{j}\,\oplus\,\Gamma_{j}\, \left(\,P_{\,W_{j}}\,(\,f\,) \,,\, \,P_{\,V_{j}}\,(\,g\,)\,\right) \;,\; \Lambda_{j}\,\oplus\,\Gamma_{j}\,\left(\,P_{\,W_{j}}\,(\,f\,) \,,\, \,P_{\,V_{j}}\,(\,g\,)\,\right)\,\right>\]
\[=\; \sum\limits_{\,j \,\in\, J}\,v_{j}^{\,2}\;\left<\,\left(\,\Lambda_{j}\,P_{\,W_{j}}\,(\,f\,) \,,\, \Gamma_{j}\,P_{\,V_{j}}\,(\,g\,)\,\right) \;,\; \,\left(\,\Lambda_{j}\,P_{\,W_{j}}\,(\,f\,) \,,\, \,\Gamma_{j}\,P_{\,V_{j}}\,(\,g\,)\,\right)\,\right> \hspace{1.2cm}\]
\[=\; \sum\limits_{\,j \,\in\, J}\,v_{j}^{\,2}\; \left(\,\left<\,\Lambda_{j}\,P_{\,W_{j}}\,(\,f\,) \;,\; \Lambda_{j}\,P_{\,W_{j}}\,(\,f\,)\,\right>_{H} \,+\, \left<\,\Gamma_{j}\,P_{\,V_{j}}\,(\,g\,) \;,\; \Gamma_{j}\,P_{\,V_{j}}\,(\,g\,)\,\right>_{X}\,\right)\hspace{.2cm}\]
\[=\; \sum\limits_{\,j \,\in\, J}\,v_{j}^{\,2}\; \left(\,\left\|\,\Lambda_{j}\,P_{\,W_{j}}\,(\,f\,)\,\right\|_{H}^{\,2} \;+\; \left\|\,\Gamma_{j}\,P_{\,V_{j}}\,(\,g\,)\,\right\|_{X}^{\,2}\,\right)\hspace{4.4cm}\]
\[ \,=\, \sum\limits_{\,j \,\in\, J}\,v_{j}^{\,2}\; \left\|\,\Lambda_{j}\,P_{\,W_{j}}\,(\,f\,)\,\right\|_{H}^{\,2} \;+\; \sum\limits_{\,j \,\in\, J}\,v_{j}^{\,2}\; \left\|\,\Gamma_{j}\,P_{\,V_{j}}\,(\,g\,)\,\right\|_{X}^{\,2}\hspace{3.7cm}\] 
\[\hspace{.6cm}\leq\; B\; \|\,f\,\|_{H}^{\,2} \;+\; D\; \|\,g\,\|_{X}^{\,2}\; \;[\;\text{since}\; \Lambda,\; \Gamma\; \;\text{are $g$-fusion frames}\;]\hspace{3.9cm}\]
\[\leq\; \max\,\{\,B,\, D\,\}\; \left(\,\|\,f\,\|_{H}^{\,2} \;+\; \|\,g\,\|_{X}^{\,2}\,\right) \,=\, \max\,\{\,B,\, D\,\}\; \|\,(\,f \,,\, g\,)\,\|^{\,2}.\hspace{1.9cm}\]Similarly, it can be shown that
\[\min\,\{\,A,\, C\,\}\; \|\,(\,f \,,\, g\,)\,\|^{\,2}\; \leq\; \sum\limits_{\,j \,\in\, J}\,v_{j}^{\,2}\; \left\|\,\left(\,\Lambda_{j}\,\oplus\,\Gamma_{j}\,\right)\, P_{\,W_{j}\,\oplus\,V_{j}}\,(\,f \,,\, g\,)\,\right\|^{\,2}.\]Therefore, for all \,$(\,f \,,\, g\,) \,\in\, H\,\oplus\,X$, we have
\[A_{\,1}\, \|\,(\,f \,,\, g\,)\,\|^{\,2}\, \leq\, \sum\limits_{\,j \,\in\, J}\,v_{j}^{\,2}\, \left\|\,(\,\Lambda_{j}\,\oplus\,\Gamma_{j}\,)\, P_{\,W_{j}\,\oplus\,V_{j}}\,(\,f \,,\, g\,)\,\right\|^{\,2} \,\leq\, B_{\,1}\, \|\,(\,f \,,\, g\,)\,\|^{\,2}\]
and hence \,$\Lambda\,\oplus\,\Gamma$\, is a \,$g$-fusion frame for \,$H\,\oplus\,X$\, with bounds \,$A_{\,1} \,=\, \min\,\{\,A,\, C\,\}$\; and \,$B_{\,1} \,=\, \max\,\{\,B,\, D\,\}$\,.\;Furthermore, for \,$(\,f \,,\, g\,) \,\in\, H\,\oplus\,X$, we have 
\[S_{\Lambda\,\oplus\,\Gamma}\,(\,f \,,\, g\,) \,=\, \sum\limits_{\,j \,\in\, J}\,v_{j}^{\,2}\; P_{\,W_{j}\,\oplus\,V_{j}}\, \left(\,\Lambda_{j}\,\oplus\,\Gamma_{j}\,\right)^{\,\ast}\, \left(\,\Lambda_{j}\,\oplus\,\Gamma_{j}\,\right)\, P_{\,W_{j}\,\oplus\,V_{j}}\,(\,f \,,\, g\,)\hspace{1.5cm}\]
\[\hspace{1.9cm}=\; \sum\limits_{\,j \,\in\, J}\,v_{j}^{\,2}\; P_{\,W_{j}\,\oplus\,V_{j}}\, \left(\,\Lambda_{j}\,\oplus\,\Gamma_{j}\,\right)^{\,\ast}\, \left(\,\Lambda_{j}\,\oplus\,\Gamma_{j}\,\right)\, \left(\,P_{\,W_{j}}\,(\,f\,) \;,\; P_{\,V_{j}}\,(\,g\,)\,\right)\] 
\[\hspace{1cm} =\; \sum\limits_{\,j \,\in\, J}\,v_{j}^{\,2}\; P_{\,W_{j}\,\oplus\,V_{j}}\, \left(\,\Lambda_{j}\,\oplus\,\Gamma_{j}\,\right)^{\,\ast}\, \left(\,\Lambda_{j}\,P_{\,W_{j}}\,(\,f\,) \;,\; \Gamma_{j}\,P_{\,V_{j}}\,(\,g\,)\,\right)\]
\[\hspace{1cm}=\; \sum\limits_{\,j \,\in\, J}\,v_{j}^{\,2}\; P_{\,W_{j}\,\oplus\,V_{j}}\, \left(\,\Lambda_{j}^{\,\ast}\,\oplus\,\Gamma_{j}^{\,\ast}\,\right)\, \left(\,\Lambda_{j}\,P_{\,W_{j}}\,(\,f\,) \;,\; \Gamma_{j}\,P_{\,V_{j}}\,(\,g\,)\,\right)\]
\[ =\; \sum\limits_{\,j \,\in\, J}\,v_{j}^{\,2}\; P_{\,W_{j}\,\oplus\,V_{j}}\, \left(\,\Lambda_{j}^{\,\ast}\, \Lambda_{j}\, P_{\,W_{j}}\,(\,f\,) \;,\; \Gamma_{j}^{\,\ast}\, \Gamma_{j}\, P_{\,V_{j}}\,(\,g\,)\,\right)\hspace{.1cm}\]
\[=\; \sum\limits_{\,j \,\in\, J}\,v_{j}^{\,2}\; \left(\,P_{\,W_{j}}\,\Lambda_{j}^{\,\ast}\, \Lambda_{j}\, P_{\,W_{j}}\,(\,f\,) \;,\; P_{\,V_{j}}\, \Gamma_{j}^{\,\ast}\, \Gamma_{j}\,P_{\,V_{j}}\,(\,g\,)\,\right)\hspace{.1cm}\]
\[\hspace{1.6cm}=\; \left(\,\sum\limits_{\,j \,\in\, J}\,v_{j}^{\,2}\; P_{\,W_{j}}\, \Lambda_{j}^{\,\ast}\, \Lambda_{j}\, P_{\,W_{j}}\,(\,f\,) \;,\; \sum\limits_{\,j \,\in\, J}\,v_{j}^{\,2}\, P_{\,V_{j}}\,\Gamma_{j}^{\,\ast}\, \Gamma_{j}\, P_{\,V_{j}}\,(\,g\,)\,\right)\]
\[\hspace{2.2cm}=\; \left(\,S_{\Lambda}\,(\,f\,) \;,\; S_{\Gamma}\,(\,g\,)\,\right) \,=\, \left(\,S_{\Lambda}\,\oplus\,S_{\Gamma}\,\right)\,(\,f \,,\, g\,)\; \;\forall\; (\,f \,,\, g\,)\,\in\, H\,\oplus\,X.\]
Hence, \,$S_{\Lambda\,\oplus\,\Gamma} \,=\, S_{\Lambda}\,\oplus\,S_{\Gamma}$.\;This completes the proof.    
\end{proof}

\begin{theorem}
Let \,$\Lambda\,\oplus\,\Gamma \,=\, \left\{\,\left(\,W_{j}\,\oplus\,V_{j},\; \Lambda_{j}\,\oplus\,\Gamma_{j},\; v_{j}\,\right)\,\right\}_{j \,\in\, J}$\; be a g-fusion frame for \,$H\,\oplus\,X$\; with frame operator \,$S_{\Lambda\,\oplus\,\Gamma}$.\;Then 
\[\Delta^{\,\prime} \,=\, \left\{\,\left(\,S_{\Lambda\,\oplus\,\Gamma}^{\,-\, \dfrac{1}{2}}\,\left(\,W_{j}\,\oplus\,V_{j}\,\right),\; (\,\Lambda_{j}\,\oplus\,\Gamma_{j}\,)\, P_{\,W_{j}\,\oplus\,V_{j}}\,S_{\Lambda\,\oplus\,\Gamma}^{\,-\, \dfrac{1}{2}},\; v_{j}\,\right)\,\right\}_{j \,\in\, J}\]is a Parseval g-fusion frame for \,$H\,\oplus\,X$. 
\end{theorem}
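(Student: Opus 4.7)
The plan is to exploit the standard canonical-tight construction for fusion frames, adapted to the direct-sum setting. Writing $S := S_{\Lambda\oplus\Gamma}$, I first note that $S$ is a bounded, positive, self-adjoint, invertible operator on $H\oplus X$, and hence so is its positive square root $S^{-1/2}$. Since $S^{-1/2}$ is a linear homeomorphism of $H\oplus X$, each image $S^{-1/2}(W_j\oplus V_j)$ is again a closed subspace, so the pieces of $\Delta'$ are well-defined as a family of weighted closed subspaces with bounded operators on them.

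The main computational step is to evaluate, for an arbitrary $(f,g)\in H\oplus X$, the expression
\[
\sum_{j\in J} v_j^{\,2}\,\bigl\|(\Lambda_j\oplus\Gamma_j)\,P_{W_j\oplus V_j}\,S^{-1/2}\,P_{S^{-1/2}(W_j\oplus V_j)}(f,g)\bigr\|^{\,2}.
\]
The key trick is to collapse the projection $P_{S^{-1/2}(W_j\oplus V_j)}$. Since $S^{-1/2}$ is self-adjoint, Theorem \ref{th1.01} applied with $T=S^{-1/2}$ and $V=W_j\oplus V_j$ yields
\[
P_{W_j\oplus V_j}\,S^{-1/2} \,=\, P_{W_j\oplus V_j}\,S^{-1/2}\,P_{\overline{S^{-1/2}(W_j\oplus V_j)}},
\]
and because $S^{-1/2}(W_j\oplus V_j)$ is already closed, the closure bar is superfluous. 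Composing with $\Lambda_j\oplus\Gamma_j$ on the left shows that the $j$-th summand in the display above coincides with $v_j^{\,2}\,\bigl\|(\Lambda_j\oplus\Gamma_j)\,P_{W_j\oplus V_j}\,S^{-1/2}(f,g)\bigr\|^{\,2}$.

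Next I would substitute $(f',g'):=S^{-1/2}(f,g)$ and recognize the resulting sum as $\bigl\langle S(f',g'),(f',g')\bigr\rangle$ by the standard identity for the $g$-fusion frame operator established in the preliminaries. Hence
\[
\sum_{j\in J} v_j^{\,2}\,\bigl\|(\Lambda_j\oplus\Gamma_j)\,P_{W_j\oplus V_j}\,S^{-1/2}(f,g)\bigr\|^{\,2} \,=\, \bigl\langle S\,S^{-1/2}(f,g),\,S^{-1/2}(f,g)\bigr\rangle \,=\, \bigl\langle S^{-1/2} S\,S^{-1/2}(f,g),\,(f,g)\bigr\rangle,
\]
and since $S^{-1/2}$ commutes with $S$ we obtain $S^{-1/2}S\,S^{-1/2}=I_{H\oplus X}$, giving the value $\|(f,g)\|^{\,2}$. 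This is exactly the Parseval identity with both bounds equal to $1$, so $\Delta'$ is a Parseval $g$-fusion frame for $H\oplus X$.

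The only subtle point — and what I would flag as the main obstacle — is the justification that the projection $P_{S^{-1/2}(W_j\oplus V_j)}$ may be dropped inside the norm; this is not an algebraic identity but requires precisely the reformulation of Theorem \ref{th1.01} together with the fact that $S^{-1/2}$, being invertible, sends closed subspaces to closed subspaces. Everything else is bookkeeping with the functional calculus of the positive operator $S$.
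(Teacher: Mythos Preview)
Your proposal is correct and follows essentially the same route as the paper: both arguments use the positive square root $S^{-1/2}$ and its commutation with $S$, the identity $\langle S h,h\rangle = \sum_j v_j^{2}\|(\Lambda_j\oplus\Gamma_j)P_{W_j\oplus V_j}h\|^{2}$, and Theorem~\ref{th1.01} (together with invertibility of $S^{-1/2}$) to pass between $P_{W_j\oplus V_j}S^{-1/2}$ and $P_{W_j\oplus V_j}S^{-1/2}P_{S^{-1/2}(W_j\oplus V_j)}$. The only cosmetic difference is the order of the steps: the paper first expands $(f,g)=S^{-1/2}S\,S^{-1/2}(f,g)$ and inserts the projection at the end, whereas you remove the projection first and then recognize the frame-operator inner product.
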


\begin{proof}
Since \,$S_{\Lambda\,\oplus\,\Gamma}$\, is a positive operator, there exists  a unique positive square root \,$S_{\Lambda\,\oplus\,\Gamma}^{\dfrac{1}{2}}\; \left(\text{or} \;S_{\Lambda\,\oplus\,\Gamma}^{\,-\, \dfrac{1}{2}}\,\right)$\, and they commute with \,$S_{\Lambda\,\oplus\,\Gamma}$\, and \,$S_{\Lambda\,\oplus\,\Gamma}^{\,-\, 1}$.\;Therefore, each \,$(\,f \,,\, g\,) \,\in\, H \,\oplus\, X$, can be written as \,$(\,f \,,\, g\,) \,=\, S_{\Lambda\,\oplus\,\Gamma}^{\,-\, \dfrac{1}{2}}\,S_{\Lambda\,\oplus\,\Gamma}\,S_{\Lambda\,\oplus\,\Gamma}^{\,-\, \dfrac{1}{2}}\,(\,f \,,\, g\,)$
\[=\, \sum\limits_{\,j \,\in\, J}\,v_{j}^{\,2}\,S_{\Lambda\,\oplus\,\Gamma}^{\,-\, \dfrac{1}{2}}\, P_{\,W_{j}\,\oplus\,V_{j}}\, \left(\,\Lambda_{j}\,\oplus\,\Gamma_{j}\,\right)^{\,\ast}\, (\,\Lambda_{j}\,\oplus\,\Gamma_{j}\,)\, P_{\,W_{j}\,\oplus\,V_{j}}\,S_{\Lambda\,\oplus\,\Gamma}^{\,-\, \dfrac{1}{2}}\,(\,f \,,\, g\,).\]Now, for each \,$(\,f \,,\, g\,) \,\in\, H \,\oplus\, X$, we have
\[\|\,(\,f \,,\, g\,)\,\|^{\,2} \,=\, \left<\,(\,f \,,\, g\,) \,,\, (\,f \,,\, g\,)\,\right>\]
\[=\, \left<\,\sum\limits_{\,j \,\in\, J}\,v_{j}^{\,2}\,S_{\Lambda\,\oplus\,\Gamma}^{\,-\, \dfrac{1}{2}}\, P_{\,W_{j}\,\oplus\,V_{j}}\, \left(\,\Lambda_{j}\,\oplus\,\Gamma_{j}\,\right)^{\,\ast}\, (\,\Lambda_{j}\,\oplus\,\Gamma_{j}\,)\, P_{\,W_{j}\,\oplus\,V_{j}}\,S_{\Lambda\,\oplus\,\Gamma}^{\,-\, \dfrac{1}{2}}\,(\,f \,,\, g\,) \;,\; (\,f \,,\, g\,)\,\right>\]
\[=\, \sum\limits_{\,j \,\in\, J}\,v_{j}^{\,2}\,\left<\,\left(\,\Lambda_{j}\,\oplus\,\Gamma_{j}\,\right)\, P_{\,W_{j}\,\oplus\,V_{j}}\,S_{\Lambda\,\oplus\,\Gamma}^{\,-\, \dfrac{1}{2}}\,(\,f \,,\, g\,) \;,\; \left(\,\Lambda_{j}\,\oplus\,\Gamma_{j}\,\right)\, P_{\,W_{j}\,\oplus\,V_{j}}\,S_{\Lambda\,\oplus\,\Gamma}^{\,-\, \dfrac{1}{2}}\,(\,f \,,\, g\,)\,\right>\]
\[=\, \sum\limits_{\,j \,\in\, J}\,v_{j}^{\,2}\,\left\|\,\left(\,\Lambda_{j}\,\oplus\,\Gamma_{j}\,\right)\, P_{\,W_{j}\,\oplus\,V_{j}}\,S_{\Lambda\,\oplus\,\Gamma}^{\,-\, \dfrac{1}{2}}\,(\,f \,,\, g\,)\,\right\|^{\,2}\hspace{5.5cm}\]
\[=\, \sum\limits_{\,j \,\in\, J}\,v_{j}^{\,2}\,\left\|\,\left(\,\Lambda_{j}\,\oplus\,\Gamma_{j}\,\right)\, P_{\,W_{j}\,\oplus\,V_{j}}\,S_{\Lambda\,\oplus\,\Gamma}^{\,-\, \dfrac{1}{2}}\;P_{\left(S_{\Lambda\,\oplus\,\Gamma}^{\,-\, \dfrac{1}{2}}\,(\,W_{j}\,\oplus\,V_{j}\,)\right)}\,(\,f \,,\, g\,)\,\right\|^{\,2}\; [\;\text{by Theorem (\ref{th1.01})}\;].\]This shows that \,$\Delta^{\,\prime}$\, is a Parseval \,$g$-fusion frame for \,$H \,\oplus\, X$.     
\end{proof}

\begin{theorem}\label{th5}
Let \,$\Lambda\,\oplus\,\Gamma \,=\, \left\{\,\left(\,W_{j}\,\oplus\,V_{j},\; \Lambda_{j}\,\oplus\,\Gamma_{j},\; v_{j}\,\right)\,\right\}_{j \,\in\, J}$\; be a g-fusion frame for \,$H\,\oplus\,X$\; with bounds \,$A_{\,1},\, B_{\,1}$\; and \,$S_{\Lambda\,\oplus\,\Gamma}$\, be the corresponding frame operator.\;Then 
\[\Delta \,=\, \left\{\,\left(\,S_{\Lambda\,\oplus\,\Gamma}^{\,-\, 1}\,\left(\,W_{j}\,\oplus\,V_{j}\,\right),\; \left(\,\Lambda_{j}\,\oplus\,\Gamma_{j}\,\right)\, P_{\,W_{j}\,\oplus\,V_{j}}\,S_{\Lambda\,\oplus\,\Gamma}^{\,-\, 1},\; v_{j}\,\right)\,\right\}_{j \,\in\, J}\] is a g-fusion frame for \,$H\,\oplus\,X$\; with frame operator \,$S_{\Lambda\,\oplus\,\Gamma}^{\,-\, 1}$.
\end{theorem}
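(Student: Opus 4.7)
Write $S := S_{\Lambda \oplus \Gamma}$, $U_{j} := W_{j} \oplus V_{j}$, and $M_{j} := \Lambda_{j} \oplus \Gamma_{j}$, so that $\Delta = \{\,(S^{-1} U_{j},\, M_{j} P_{U_{j}} S^{-1},\, v_{j})\,\}_{j \in J}$. Since $S$ is bounded, positive, self-adjoint, and invertible on $H \oplus X$, so is $S^{-1}$; moreover $A_{1} I \leq S \leq B_{1} I$ and $B_{1}^{-1} I \leq S^{-1} \leq A_{1}^{-1} I$. Because $S^{-1}$ is a bounded bijection, each subspace $S^{-1} U_{j}$ is closed. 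The plan is to first reduce everything to an expression involving $M_{j} P_{U_{j}} S^{-1}$ (without the outer projection $P_{S^{-1} U_{j}}$) by means of Theorem \ref{th1.01}, and then read off both the frame inequalities and the frame operator for $\Delta$.

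Applying Theorem \ref{th1.01} with $T := S^{-1}$ (which is self-adjoint) and $V := U_{j}$, the identity stated there becomes
\[
P_{U_{j}}\, S^{-1} \;=\; P_{U_{j}}\, S^{-1}\, P_{S^{-1} U_{j}},
\]
so that for every $(f,g) \in H \oplus X$,
\[
M_{j}\, P_{U_{j}}\, S^{-1}\, P_{S^{-1} U_{j}}(f,g) \;=\; M_{j}\, P_{U_{j}}\, S^{-1}(f,g).
\]
Taking adjoints of the first identity yields the symmetric version $S^{-1} P_{U_{j}} = P_{S^{-1} U_{j}} S^{-1} P_{U_{j}}$, which will be needed for the frame operator computation.

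For the frame inequalities of $\Delta$, substitute the above into $\sum_{j} v_{j}^{\,2}\, \|M_{j} P_{U_{j}} S^{-1} P_{S^{-1} U_{j}}(f,g)\|^{2}$ and apply the g-fusion frame inequalities of $\Lambda \oplus \Gamma$ to the vector $S^{-1}(f,g)$:
\[
A_{1}\, \|S^{-1}(f,g)\|^{2} \;\leq\; \sum_{j \in J} v_{j}^{\,2}\, \|M_{j} P_{U_{j}} S^{-1}(f,g)\|^{2} \;\leq\; B_{1}\, \|S^{-1}(f,g)\|^{2}.
\]
Since $A_{1} I \leq S \leq B_{1} I$ together with positivity gives $B_{1}^{-2}\|(f,g)\|^{2} \leq \|S^{-1}(f,g)\|^{2} \leq A_{1}^{-2}\|(f,g)\|^{2}$, this shows that $\Delta$ is a g-fusion frame for $H \oplus X$ with bounds $A_{1}/B_{1}^{2}$ and $B_{1}/A_{1}^{2}$.

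Finally, for the frame operator, use both identities from Theorem \ref{th1.01} to collapse the outer projections:
\[
S_{\Delta}(f,g) \;=\; \sum_{j \in J} v_{j}^{\,2}\, P_{S^{-1} U_{j}} (M_{j} P_{U_{j}} S^{-1})^{\ast} M_{j} P_{U_{j}} S^{-1} P_{S^{-1} U_{j}}(f,g) \;=\; S^{-1} \Bigl( \sum_{j \in J} v_{j}^{\,2}\, P_{U_{j}} M_{j}^{\ast} M_{j} P_{U_{j}} \Bigr) S^{-1}(f,g).
\]
The operator in parentheses is precisely $S = S_{\Lambda \oplus \Gamma}$, so $S_{\Delta} = S^{-1} S S^{-1} = S^{-1}$, as claimed. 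The only delicate point is the bookkeeping with the projections $P_{S^{-1} U_{j}}$; once Theorem \ref{th1.01} is invoked (via self-adjointness of $S^{-1}$), the remaining steps are routine algebra.
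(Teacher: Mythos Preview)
Your proof is correct and follows essentially the same route as the paper: both reduce the outer projection $P_{S^{-1}U_{j}}$ away via Theorem~\ref{th1.01} (using that $S^{-1}$ is self-adjoint), and both compute the frame operator of $\Delta$ by collapsing to $S^{-1}S\,S^{-1}$. The only real difference is in the lower frame bound: you apply the lower $g$-fusion frame inequality of $\Lambda\oplus\Gamma$ to $S^{-1}(f,g)$ together with $\|S^{-1}(f,g)\|\geq B_{1}^{-1}\|(f,g)\|$, obtaining the constant $A_{1}/B_{1}^{2}$, whereas the paper instead expands $\|(f,g)\|^{4}=|\langle(f,g),(f,g)\rangle|^{2}$ through the reconstruction formula and uses Cauchy--Schwarz, which yields the slightly sharper lower bound $B_{1}^{-1}$; either argument suffices for the statement as claimed.
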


\begin{proof}
For any \,$(\,f \,,\, g\,) \,\in\, H\,\oplus\,X$, we have 
\[(\,f \,,\, g\,) \,=\, S_{\Lambda\,\oplus\,\Gamma}\,S_{\Lambda\,\oplus\,\Gamma}^{\,-\, 1}\,(\,f \,,\, g\,)\hspace{9cm}\]
\begin{equation}\label{eq10}
\hspace{1.2cm}\,=\, \sum\limits_{\,j \,\in\, J}\,v_{j}^{\,2}\, P_{\,W_{j}\,\oplus\,V_{j}}\, \left(\,\Lambda_{j}\,\oplus\,\Gamma_{j}\,\right)^{\,\ast}\, (\,\Lambda_{j}\,\oplus\,\Gamma_{j}\,)\, P_{\,W_{j}\,\oplus\,V_{j}}\,S_{\Lambda\,\oplus\,\Gamma}^{\,-\, 1}\,(\,f \,,\, g\,).
\end{equation}
By Theorem (\ref{th1.01}), for any \,$(\,f \,,\, g\,) \,\in\, H\,\oplus\,X$, we have
\[\sum\limits_{\,j \,\in\, J}\,v_{j}^{\,2}\,\left\|\,\left(\,\Lambda_{j}\,\oplus\,\Gamma_{j}\,\right)\, P_{\,W_{j}\,\oplus\,V_{j}}\,S_{\Lambda\,\oplus\,\Gamma}^{\,-\, 1}\; P_{\,S_{\Lambda\,\oplus\,\Gamma}^{\,-\, 1}\,\left(\,W_{j}\,\oplus\,V_{j}\,\right)}\,(\,f \,,\, g\,)\,\right\|^{\,2}\]
\begin{equation}\label{eq11}
\hspace{1cm}\,=\, \sum\limits_{\,j \,\in\, J}\,v_{j}^{\,2}\,\left\|\,\left(\,\Lambda_{j}\,\oplus\,\Gamma_{j}\,\right)\, P_{\,W_{j}\,\oplus\,V_{j}}\,S_{\Lambda\,\oplus\,\Gamma}^{\,-\, 1}\,(\,f \,,\, g\,)\,\right\|^{\,2}
\end{equation}
\[\hspace{3.42cm}\leq\, B_{\,1}\, \left\|\,S_{\Lambda\,\oplus\,\Gamma}^{\,-\, 1}\,\right\|^{\,2}\,\|\,(\,f \,,\, g\,)\,\|^{\,2}\; \;[\;\text{since}\; \Lambda\,\oplus\,\Gamma\; \text{is $g$-fusion frame}\;].\] 
On the other hand, using (\ref{eq10}), we get
\[\|\,(\,f \,,\, g\,)\,\|^{\,4} \,=\, \left|\,\left<\,(\,f \,,\, g\,) \,,\, (\,f \,,\, g\,)\,\right>\,\right|^{\,2}\]
\[\,=\, \left|\,\left<\,\sum\limits_{\,j \,\in\, J}\,v_{j}^{\,2}\, P_{\,W_{j}\,\oplus\,V_{j}}\, \left(\,\Lambda_{j}\,\oplus\,\Gamma_{j}\,\right)^{\,\ast}\, (\,\Lambda_{j}\,\oplus\,\Gamma_{j}\,)\, P_{\,W_{j}\,\oplus\,V_{j}}\, S_{\Lambda\,\oplus\,\Gamma}^{\,-\, 1}\,(\,f \,,\, g\,) \;,\; (\,f \,,\, g\,)\,\right>\,\right|^{\,2}\]
\[=\, \left|\,\sum\limits_{\,j \,\in\, J}\,v_{j}^{\,2}\,\left<\,(\,\Lambda_{j}\,\oplus\,\Gamma_{j}\,)\; P_{\,W_{j}\,\oplus\,V_{j}}\,S_{\Lambda\,\oplus\,\Gamma}^{\,-\, 1}\,(\,f \,,\, g\,) \;,\; \left(\,\Lambda_{j}\,\oplus\,\Gamma_{j}\,\right)\,P_{\,W_{j}\,\oplus\,V_{j}}\;(\,f \,,\, g\,)\,\right>\,\right|^{\,2}\hspace{1.5cm}\]
\[\leq\, \sum\limits_{\,j \,\in\, J}\,v_{j}^{\,2}\; \left\|\,(\,\Lambda_{j}\,\oplus\,\Gamma_{j}\,)\; P_{\,W_{j}\,\oplus\,V_{j}}\,S_{\Lambda\,\oplus\,\Gamma}^{\,-\, 1}\,(\,f \,,\, g\,)\,\right\|^{\,2}\, \sum\limits_{\,j \,\in\, J}\,v_{j}^{\,2}\; \left\|\,(\,\Lambda_{j}\,\oplus\,\Gamma_{j}\,)\; P_{\,W_{j}\,\oplus\,V_{j}}\;(\,f \,,\, g\,)\,\right\|^{\,2}\]
\[\leq\, \sum\limits_{\,j \,\in\, J}\,v_{j}^{\,2}\; \left\|\,(\,\Lambda_{j}\,\oplus\,\Gamma_{j}\,)\; P_{\,W_{j}\,\oplus\,V_{j}}\,S_{\Lambda\,\oplus\,\Gamma}^{\,-\, 1}\,(\,f \,,\, g\,)\,\right\|^{\,2}\, B_{\,1}\,\|\,(\,f \,,\, g\,)\,\|^{\,2}\; [\,\text{as}\; \Lambda \,\oplus\, \Gamma \;\text{is $g$-fusion frame}\;]\]
\[=\, \,B_{\,1}\,\|\,(\,f \,,\, g\,)\,\|^{\,2}\, \sum\limits_{\,j \,\in\, J}\,v_{j}^{\,2}\,\left\|\,(\,\Lambda_{j}\,\oplus\,\Gamma_{j}\,)\, P_{\,W_{j}\,\oplus\,V_{j}}\,S_{\Lambda\,\oplus\,\Gamma}^{\,-\, 1}\,P_{\,S_{\Lambda\,\oplus\,\Gamma}^{\,-\, 1}\,\left(\,W_{j}\,\oplus\,V_{j}\,\right)}\,(\,f \,,\, g\,)\,\right\|^{\,2} [\;\text{from (\ref{eq11})}\;].\]
Therefore,
\[ B_{\,1}^{\,-\, 1}\, \|\,(\,f \,,\, g\,)\,\|^{\,2} \,\leq\, \sum\limits_{\,j \,\in\, J}\,v_{j}^{\,2}\,\left\|\,(\,\Lambda_{j}\,\oplus\,\Gamma_{j}\,)\, P_{\,W_{j}\,\oplus\,V_{j}}\,S_{\Lambda\,\oplus\,\Gamma}^{\,-\, 1}\,P_{\,S_{\Lambda\,\oplus\,\Gamma}^{\,-\, 1}\,\left(\,W_{j}\,\oplus\,V_{j}\,\right)}\,(\,f \,,\, g\,)\,\right\|^{\,2}.\]
Hence, \,$\Delta$\; is a \,$g$-fusion frame for \,$H\,\oplus\,X$.\;Let \,$S_{\Delta}$\; be the \,$g$-fusion frame operator for \,$\Delta$\, and take \,$\Delta_{j} \,=\, \Lambda_{j}\,\oplus\,\Gamma_{j}$.\;Now, for each \,$(\,f \,,\, g\,) \,\in\, H\,\oplus\,X,\; S_{\Delta}\,(\,f \,,\, g\,)$
\[=\sum\limits_{\,j \,\in\, J}\,v_{j}^{\,2}\, P_{\,S_{\Lambda\,\oplus\,\Gamma}^{\,-\, 1} \left(\,W_{j}\,\oplus\,V_{j}\,\right)} \left(\,\Delta_{j}\, P_{\,W_{j}\,\oplus\,V_{j}}\,S_{\Lambda\,\oplus\,\Gamma}^{\,-\, 1}\,\right)^{\,\ast} \left(\,\Delta_{j}\,P_{\,W_{j}\,\oplus\,V_{j}}\,S_{\Lambda\,\oplus\,\Gamma}^{\,-\, 1}\,\right) P_{\,S_{\Lambda\,\oplus\,\Gamma}^{\,-\, 1}\,\left(\,W_{j}\,\oplus\,V_{j}\,\right)} (\,f \,,\, g\,)\]
\[= \sum\limits_{\,j \,\in\, J}\,v_{j}^{\,2} \left(\,P_{\,W_{j}\,\oplus\,V_{j}}\, S_{\Lambda\,\oplus\,\Gamma}^{\,-\, 1}\, P_{\,S_{\Lambda\,\oplus\,\Gamma}^{\,-\, 1} (\,W_{j}\,\oplus\,V_{j}\,)}\,\right)^{\,\ast} \Delta^{\,\ast}_{j}\, \Delta_{j}\left(\,P_{\,W_{j}\,\oplus\,V_{j}}\,S_{\Lambda\,\oplus\,\Gamma}^{\,-\, 1}\, P_{\,S_{\Lambda\,\oplus\,\Gamma}^{\,-\, 1} (\,W_{j}\,\oplus\,V_{j}\,)}\,\right)(\,f \,,\, g\,)\]
\[=\, \sum\limits_{\,j \,\in\, J}\,v_{j}^{\,2}\, \left(\,P_{\,W_{j}\,\oplus\,V_{j}}\, S_{\Lambda\,\oplus\,\Gamma}^{\,-\, 1}\,\right)^{\,\ast}\, \Delta^{\,\ast}_{j}\, \Delta_{j}\,\left(\,P_{\,W_{j}\,\oplus\,V_{j}}\,S_{\Lambda\,\oplus\,\Gamma}^{\,-\, 1} \,\right)\,(\,f \,,\, g\,)\; [\;\text{using Theorem (\ref{th1.01})}\;]\]
\[=\, \sum\limits_{\,j \,\in\, J}\,v_{j}^{\,2}\; S_{\Lambda\,\oplus\,\Gamma}^{\,-\, 1}\, P_{\,W_{j}\,\oplus\,V_{j}}\, \left(\,\Lambda_{j}\,\oplus\,\Gamma_{j}\,\right)^{\,\ast}\, \left(\,\Lambda_{j}\,\oplus\,\Gamma_{j}\,\right)\, \left(\,P_{\,W_{j}\,\oplus\,V_{j}}\,S_{\Lambda\,\oplus\,\Gamma}^{\,-\, 1} \,\right)\,(\,f \,,\, g\,)\hspace{2cm}\]
\[=\, S_{\Lambda\,\oplus\,\Gamma}^{\,-\, 1}\, \left(\,\sum\limits_{\,j \,\in\, J}\,v_{j}^{\,2}\; P_{\,W_{j}\,\oplus\,V_{j}}\, \left(\,\Lambda_{j}\,\oplus\,\Gamma_{j}\,\right)^{\,\ast}\, \left(\,\Lambda_{j}\,\oplus\,\Gamma_{j}\,\right)\, P_{\,W_{j}\,\oplus\,V_{j}}\,\,\left(\,S_{\Lambda\,\oplus\,\Gamma}^{\,-\, 1}\, (\,f \,,\, g\,)\,\right)\,\right)\hspace{.3cm}\]
\[=\, S_{\Lambda\,\oplus\,\Gamma}^{\,-\, 1}\; S_{\Lambda\,\oplus\,\Gamma}\, \left(\,S_{\Lambda\,\oplus\,\Gamma}^{\,-\, 1}\, (\,f \,,\, g\,)\,\right)\; \;[\;\text{by definition of }\;S_{\Lambda\,\oplus\,\Gamma}\;]. \hspace{5cm}\]
\[ \,=\, S_{\Lambda\,\oplus\,\Gamma}^{\,-\, 1}\, (\,f \,,\, g\,). \hspace{11.5cm}\]
Thus, \,$S_{\Delta} \,=\, S_{\Lambda\,\oplus\,\Gamma}^{\,-\, 1}$.\;This completes the proof.    
\end{proof}

\begin{note}
Form Theorem (\ref{th5}), we can conclude that if \,$\Lambda \,\oplus\, \Gamma$\, is a \,$g$-fusion frame for \,$H \,\oplus\, K$, then \,$\Delta$\, is also a \,$g$-fusion frame for \,$H \,\oplus\, K$.\;The \,$g$-fusion frame \,$\Delta$\, is a called the canonical dual \,$g$-fusion frame of \,$\Lambda \,\oplus\, \Gamma$.  
\end{note}

\end{document}